\newtheorem{thm}{Theorem}[section]
\newtheorem{lm}[thm]{Lemma}
\newtheorem{cor}[thm]{Corollary}
\newtheorem{prop}[thm]{Proposition}
\theoremstyle{definition}
\newtheorem{df}[thm]{Definition}
\newtheorem*{df*}{Definition}
\theoremstyle{remark}
\newtheorem{rem}[thm]{Remark}
\newtheorem*{rem*}{Remark}
\numberwithin{equation}{section}
\newcommand{\ci}[1]{_{ {}_{\scriptstyle #1}}}
\newcommand{\ti}[1]{_{\scriptstyle \text{\rm #1}}}
\newcommand{\cB}{\mathcal{B}}
\newcommand{\cD}{\mathscr{D}}
\newcommand{\cX}{\mathcal{X}}
\newcommand{\cN}{\mathcal{N}}
\newcommand{\cQ}{\mathcal{Q}}
\newcommand{\cG}{\mathcal{G}}
\newcommand{\cE}{\mathcal{E}}
\newcommand{\cF}{\mathcal{F}}
\newcommand{\fS}{\mathfrak{S}}
\newcommand{\f}{\varphi}
\newcommand{\e}{\varepsilon}
\newcommand{\R}{\mathbb{R}}
\newcommand{\Z}{\mathbb{Z}}
\newcommand{\N}{\mathbb{N}}
\newcommand{\E}{\mathbb{E}}
\newcommand{\bI}{\mathbf{I}}
\newcommand{\ch}{\operatorname{ch}}
\newcommand{\p}{\partial}
\newcommand{\1}{\mathbf{1}}
\newcommand{\wt}{\widetilde}
\newcommand{\cz}{Calder\'{o}n--Zygmund\ }
\newcommand{\spn}{\operatorname{span}}
\newcommand{\La}{\langle }
\newcommand{\Ra}{\rangle }
\newcommand{\sd}{{\scriptstyle\Delta}}
\newcommand{\bu}{\mathbf{u}}
\newcommand{\bff}{\mathbf{f}}
\newcommand{\bv}{\mathbf{v}}
\newcommand{\be}{\mathbf{e}}
\newcommand{\fdot}{\,\cdot\,}
\newcommand{\supp}{\operatorname{supp}}
\def\cyr{\fontencoding{OT2}\fontfamily{wncyr}\selectfont}
\DeclareTextFontCommand{\textcyr}{\cyr}
\renewcommand{\labelenumi}{(\roman{enumi})}
\newcounter{vremennyj}
\newcommand\cond[1]{\setcounter{vremennyj}{\theenumi}\setcounter{enumi}{#1}\labelenumi\setcounter{enumi}{\thevremennyj}}
\begin{document}

\title[Entropy conditions]%
{
Entropy conditions in two weight inequalities for singular integral 
operators
}

\author[S.~Treil]{Sergei Treil}

\thanks{ST is partially supported by the NSF grant DMS-1301579}

\author[A.~Volberg]{Alexander Volberg}
\thanks{AV is partially supported by the NSF grant DMS-1265549 and by the Hausdorff Institute for Mathematics, Bonn, Germany}


\makeatletter
\@namedef{subjclassname@2010}{
  \textup{2010} Mathematics Subject Classification}
\makeatother

\subjclass[2010]{42B20, 42B35, 47A30}



%
%

\keywords{Calder\'on--Zygmund operators, Bellman function, 
   bump conditions}

\begin{abstract}
The new type of ``bumping'' of the Muckenhoupt $A_2$ condition on weights is introduced. It is based on bumping the entropy integral of the weights. In particular, one gets (assuming mild regularity conditions on the corresponding Young functions) the bump conjecture proved in \cite{Le1}, \cite{NRTV1} as a corollary of entropy bumping. But our entropy bumps cannot be reduced to the bumping with Orlicz norms in the solution of bump conjecture, they are effectively smaller. Henceforth we get somewhat stronger result than the one that solves the bump conjecture in \cite{Le1}, \cite{NRTV1}. New results concerning one sided bumping conjecture are obtained.  
All the results hold in the general non-homogeneous situation. 
\end{abstract}

\maketitle

\section{Introduction}
\label{intro}

The original 
question about two weight estimates for the singular integral operators is to find a necessary and sufficient condition on the weights (non-negative locally integrable 
functions) $w$ and $v$ such that a \cz operators $T: L^p(w)\to L^p (v)$ is bounded, i.e.~the inequality
\begin{align}
\label{two-weight_01}
\int |Tf|^p v dx \le C \int |f|^p w dx \qquad \forall f \in L^p(w)
\end{align}
holds.

In the one weight case $w=v$ the famous Muckenhoupt condition is necessary and sufficient for \eqref{two-weight_01}
\begin{align}
\tag{$A_p$}
\sup_I \left(|I|^{-1}\int_{I} w dx \right) \left(|I|^{-1}\int_{I} w^{-p'/p} dx \right)^{p/p'} <\infty
\end{align}
where the supremum is taken over all cubes $I$, and $1/p+1/p'=1$. More precisely, this condition is sufficient for all \cz operators, and is also necessary for classical (interesting) \cz operators, such as Hilbert transform, Riesz transform (vector-valued, when all Riesz transforms are considered together), Beurling--Ahlfors operator.

The inequality \eqref{two-weight_01} is equivalent to the boundedness of the  operator $M_{v^{1/p}} T M_{w^{-1/p}}$ in the non-weighted $L^p$; here $M_\f$ is the multiplication operator, $M_\f f = \f f$. Denoting $u=w^{-p'/p}$ we can rewrite the problem in the symmetric form as the $L^p$ boundedness of $M_{v^{1/p} }T M_{u^{1/p'}}$.

So the problem can be stated as: \emph{Describe all weights (i.e.~non-negative  functions) $u$, $v$ such that the operator $M_{v^{1/p} }T M_{u^{1/p'}}$ is bounded in (the non-weighted) $L^p$.}

The boundedness of $M_{v^{1/p} }T M_{u^{1/p'}}$ means that 
\[
\int \left| T(u^{1/p'} g )\right|^p v dx \le \int |g|^p dx \qquad \forall g\in L^p, 
\]
which after denoting $g=u^{1/p} f$ can be rewritten as 
\begin{align}
\label{two-weight_02}
\int \left| T(u f )\right|^p v dx \le C\int |f|^p u dx  .
\end{align}
Such symmetric formulation is well known since 80s, and  for the two weight setting it looks more natural: in particular, if $T$ is an integral operator, then the integration in the operator is performed with respect to the same measure $udx$ as in the domain. This simplifies the problem, because it eliminates the third measure (the Lebesgue measure) from the considerations. 

Note also, that this  formulation is  formally more general than \eqref{two-weight_01}, because in \eqref{two-weight_01} it is usually assumed that  $w$($=u^{-p/p'}$) is locally integrable, while in \eqref{two-weight_02} we only assume local integrability of $u$; in particular, $u$ can be zero on a set of positive measure. 

It is usually assumed that in \eqref{two-weight_01} $v$ and $w$ are locally integrable, 
but  for \eqref{two-weight_02}  (respectively \eqref{two-weight_01}) to hold for interesting operators (Hilbert Transform, vector Riesz Transform, Beurling--Ahlfors Transform, etc.) the functions $v$ and $u$ (respectively $v$ and $w^{-p'/p}$) also has to be locally integrable, see \eqref{2weightA_p}, \eqref{2weightAp-01} below.

Note also, that in \eqref{two-weight_02} one can instead of weights $u$, $v$ consider Radon measures without common atoms: see \cite{Liaw-Tr_Reg_2010} for the interpretation of the singular integral operators in this case. 

For the interesting operators the following two weight analogue of the $A_p$ condition is necessary for the estimates \eqref{two-weight_01} and \eqref{two-weight_02} respectively:
\begin{align}
\label{2weightAp-01}
\sup_I \left(|I|^{-1}\int_{I} v dx \right) &\left(|I|\int_{I} w^{-p'/p} dx \right)^{p/p'}  <\infty
\\
\label{2weightA_p}
\sup_I \left(|I|^{-1}\int_{I} v dx \right) &\left(|I|^{-1}\int_{I} u dx \right)^{p/p'} <\infty .
\end{align}

Simple counterexamples show that this condition is not sufficient for the boundedness. So a natural way to get a sufficient condition is to ``bump'' the  norms, i.e.~to replace the $L^1$ norms of $u$ and $v$ in \eqref{2weightA_p}  
by some stronger  norms. 

The first idea that comes to mind is to replace $L^1$ norms by $L^{1+\e}$ norms. Namely, it was proved by C.J.~Neugebauer \cite{Neug_Insert_Ap_1983} that  \eqref{2weightAp-01} with $v^{1+\e}$, $w^{1+\e} $ instead of $v$ and $w$ is sufficient for the boundedness of \cz operators, because in this case one can insert an $A_p$ weight between $v$ and $Cw$.

More generally, given a Young function $\Phi$ and a cube $I$ one can consider the normalized on $I$ Orlicz space $L^\Phi(I)$ with the norm given by
\begin{align*}
\|f\|\ci{L^\Phi(I)} := \inf\left\{ \lambda>0 : \int_I \Phi\left( \frac{ f(x)}{\lambda } \right) \frac{dx}{|I|} \le 1 \right\}.
\end{align*}
And it was conjectured (for $p=2$) that  if the Young functions $\Phi_1$ and $\Phi_2$ are integrable near infinity,
\begin{equation}
\label{inte}
\int^\infty \frac{dx}{\Phi_{i}(x)} <\infty,\,\, i=1,2\,,
\end{equation}
then the condition
\begin{align}
\label{bump_01}
\sup_I \| v\|\ci{L^{\Phi_1}(I)} \| u\|\ci{L^{\Phi_2}(I)} < \infty
\end{align}
implies that  for any bounded \cz operator $T$
the operator $M_{v^{1/2}} T M_{u^{1/2}}$ is bounded in $L^2$. Usually in the literature a more complicated (although equivalent) form of this conjecture was presented, but at least in the case $p=2$ condition \eqref{bump_01} seems more transparent.%
\footnote{The bump condition was also  stated for $p\ne 2$, but  in this paper we only deal with the case $p=2$.}

Condition \eqref{bump_01} was considered in numerous papers in the attempt to prove its universal sufficiency for \emph{all} \cz operators. The reader can find beautiful approaches in  \cite{CU-Ma-Pe}, \cite{CU-Pe99},  \cite{CU-Pe00},   \cite{Le},  \cite{Pe94JL}, \cite{PeMax},  where partial results for some \cz operators were proved (note that \cite{PeMax} is about maximal operator and not about \cz operators). Finally in \cite{Le1} the sufficiency of bump condition for all \cz operators to be bounded was fully proved (and even generalized to all $p\in (1, \infty)$), although in formally less general situation of the estimates  \eqref{two-weight_01}. Simultaneously and by different  methods of Bellman function this bump conjecture was proved in \cite{NRTV1}.

\begin{rem*}
We should mention here a breakthrough paper \cite{La} where it was proved that the estimate \eqref{two-weight_02} holds for $p=2$ if and only if \eqref{two-weight_02} and the ``dual'' estimate with $u$ and $v$ interchanged hold uniformly for indicators of intervals (and the so-called Poisson $A_2$ condition is satisfied); this result solves a long standing conjecture by Nazarov--Treil--Voberg. 

Paper \cite{La} was a culmination of the long line of research started by Nazarov and us \cite{NTVlost} (see also the last two chapters of a book \cite{Vo}), and continued in joint works of Lacey, Sawyer, Uriarte-Tuero, and Shen: \cite{LSUT}, \cite{LSUT1}, \cite{LSSUT}. In particular, a very interesting achievment of the latter group was the introduction of a certain ``energy", a very interesting positive quantity capturing ``the specifics of the tail" of the Hilbert transform. The energy condition did this better than the similar condition in \cite{NTVlost} (see also \cite{Vo}). In particular, the energy condition turned out to be a necessary one for the two-weight boundedness of the Hilbert transform. 

\medskip

But in this paper we are looking for a condition that is \emph{universal}, i.e.~sufficient for the estimates for the class of \emph{all} \cz operators (or for its reasonable subclass). We would also  want this condition to be reasonably simple, for example, it should involve only the testing of weights, and not the testing  of  operators themselves: we think that we achieved  practically ultimate results  of this kind in the present article.

It would also be extremely interesting to find a sufficient condition that is also necessary, meaning that it follows from \eqref{two-weight_02} for \emph{all} \cz operators (or some reasonable subclass).  However, we do not treat necessity in this paper (however, we treat the sharpness of our results), and we are afraid this might be a very hard problem. 
\end{rem*}

In this paper we give a quite different way of ``bumping'' the weights. We use what we call  the ``entropy bumps". The resulting bumps are effectively smaller than the ones used in \eqref{bump_01} (see Lemmas \ref{l:Orl-Entr}, \ref{l:volb}).

To explain what is the entropy bump condition, let us introduce
for a weight $u$  
\[
\bu\ci I:= \La u\Ra\ci I = \|u\|\ci{L^1(I)},
\qquad \bu^*\ci I:=  \|u\|_{L\log L(I)} \approx \| M \1\ci I u \|\ci{L^1(I)} ,  
\] 
and similarly for a weight $v$. 

Function $\Phi_0(t) = t\log^+ t$ does not satisfy \eqref{inte} of course, 
and the condition on two weights
\[
\sup_I\bv^*\ci I \bu^*\ci I<\infty
\]
is  not sufficient condition for the estimate \eqref{two-weight_02}, see Section \ref{s:Sharpness} below. . 

To make up for the ``smallness" of $L\log L$-bumps $\sup_I\bv^*\ci I\bu^*\ci I$ we introduce the ``penalty term''.

Namely, let  $\alpha:[1, \infty)\to \R_+$ be a function such that $t\mapsto t\alpha(t)$ is increasing and 
\begin{align*}
C_\alpha:=\int_1^\infty \frac1{t\alpha(t)} dt <\infty. 
\end{align*}
The entropy bump of the weight $u$ is the following quantity
\begin{align}
\label{entbump}
\cE^\alpha_I(u):=\bu\ci I^*\alpha\left(\bu^*\ci I/\bu\ci I\right) ,.
\end{align}

We will prove that the entropy bump condition
\begin{equation}
\label{entbumpCond}
\sup_I \cE^\alpha_I(u) \cE^\alpha_I(v)<\infty
\end{equation}
is already sufficient for the boundedness of $M_{v^{1/2}} T M_{u^{1/2}}$ for any Calder\'on--Zygmund operator, as soon as $C_\alpha<\infty$.

Moreover, given two Orlicz functions $\Phi_1, \Phi_2$ (with a mild extra regularity) satisfying \eqref{inte} and two weights $u,w$ satisfying \eqref{bump_01} one can show the existence of $\alpha$, with $C_\alpha<\infty$ such that the entropy bump condition \eqref{entbumpCond} is satisfied, see Lemma \ref{l:Orl-Entr} below.

This shows that the results of the present paper give those of \cite{NRTV1}, \cite{Le1} (for $p=2$).

But not the other way around because the weights $u, v$ are allowed to be only in $L\log L$ locally by the condition \eqref{entbumpCond}, while in \eqref{bump_01} the weights are required to be in much smaller Orlicz spaces.

We will also prove a partial result concerning the one sided bump conjecture. Namely, we will show that if 
\begin{align}
\label{1sided-01}
\sup_I \alpha\left(\bu^*\ci I/\bu\ci I\right)^2\bu^*\ci I \bv\ci I <\infty, \qquad \sup_I \alpha\left(\bv^*\ci I/\bv\ci I\right)^2\bv^*\ci I \bu\ci I <\infty, 
\end{align}
then for the sparse (Lerner type) operators 
the weighted estimate \eqref{two-weight_02} is satisfied uniformly. This is a partial result in the direction of the so-called one sided bump conjecture. 
Again, in  light of Lemma \ref{l:Orl-Entr} below, this result is stronger than the corresponding result in \cite{NRV_1Sbumps} or the result for $p=2$ in \cite{La_bumps}
(for Young functions with mild regularity). 

If one could eliminate the exponent $2$ at $\alpha$, then it would give the one-sided bump conjecture for sufficiently  regular Young functions. However, our proofs give us the exponent $2$, and at the moment we do not know if it is an artifact of the proof, or if it is essential. 
At the moment we are inclined to think that this is essential.

\begin{rem*}
All the result can be stated with two penalty functions $\alpha_{1,2}$. However, defining $\alpha(s)= \min\{\alpha_1(s), \alpha_2(s)\}$, one immediately gets the results for two penalty functions $\alpha_{1,2}$ from the results with one $\alpha$. 
\end{rem*}

\section{General setup and main results}

\subsection{Setup and main definitions. }
\label{s:setup}
Consider a $\sigma$-finite measure space $(\cX, \fS, \mu)$ with the filtration (i.e.~with the sequence of increasing $\sigma$-algebras) $\fS_n$, $n\in\Z$, $\fS_n\subset\fS_{n+1}$. 

We assume that each $\sigma$-algebra $\fS_n$ is \emph{atomic}, meaning that 
there exists a countable \emph{disjoint} collection $\cD_n$ of the sets of positive measure (atoms), such that every $A\in\fS_n$ is a union of sets $I\in\cD_n$. 

The fact that $ \fS_n\subset \fS_{n+1}$ means that every $I\in \cD_n$ is at most countable union of $I'\in\cD_{n+1}$. 

We denote by $\cD=\bigcup_{n\in\Z}\cD_n$ the collection of all atoms  (in all generations). 

The typical example will be the filtration given by a dyadic lattice in $\R^d$, so the notation $\cD$. Note, that we do not assume any homogeneity  in our setup, so the more interesting example will be the same dyadic lattice in $\R^d$, but the underlying measure is an arbitrary Radon measure $\mu$.  

We will allow a situation  when an atom $I$ belong to several (even infinitely many) generations $\cD_n$, so the case of dyadic lattice in a cube is also covered. 
However, we will not allow $I$ to be in all generations, because in this case nothing interesting happens on the interval $I$. 

We  usually will not assign a special symbol for the underlying measure, and use $|A|$ instead of $\mu(A)$, and $dx$ instead of $d\mu(x)$ in integrals. 

Note that our filtered space $\cX$ can be represented as a countable (finite or infinite) direct sum of the filtered spaces treated in \cite{Tr_Comm-para2010}, so all the results from \cite{Tr_Comm-para2010} hold in our case. 

\begin{df}
\label{df:child} Let $I\in\cD$, and let $n\in\Z$ be a maximal integer such that $I\in \cD_n$. Then $I'\in\cD_{n+1}$ such that $I'\subset I$ are called the \emph{children} of $I$. The collection of all children of $I$ will be denoted by $\ch(I)$. 

If $I\in \cD_n$ for all sufficiently large $n$ we set $\ch(I)=\{I\}$. 

The (grand)children $\ch_n(I)$ of order $n$ can be defined inductively, $\ch_1(I) :=\ch(I)$, 
\[
\ch_n(I) := \bigcup_{I'\in\ch_{n-1}(I)} \ch(I'). 
\]
We also formally define $\ch_0(I):=\{I\}$. 
\end{df}

\subsubsection{Martingale differences, Haar shifts and paraproducts}
For a measurable $I$ we define the average
\[
\La f\Ra\ci I :=|I|^{-1} \int_I f dx, 
\]
and the averaging operator $\E\ci I$ by
\[
\E\ci I f = \La f\Ra\ci I \1\ci I. 
\]
For $I\in\cD$ the \emph{martingale difference operator} $\Delta\ci I$ is given by
\[
\Delta\ci I f := \sum_{I'\in\ch(I) } \E\ci{I'} f \  - \ \E\ci I f ; 
\]
note that formally 
if $\ch(I)=\{I\}$ then 
 $\Delta\ci I =0$. 
 
The $n$th order martingale difference $\Delta^n\ci I$ is given by 
\[
\Delta^n\ci I := \sum_{I'\in\ch_n(I)} \E\ci{I'} \ - \E\ci I = \sum_{\substack{I'\in\ch_k(I) \\ 0\le k <n } } \Delta\ci{I'}\,.
\]

Denote $D\ci I:= \Delta\ci I L^2$ (note that $D\ci I=\{0\}$ if $\ch(I)=\{I\}$), and $D^n\ci I := \Delta\ci I^n L^2$.  

\begin{df}
An operator $T$ on $L^2=L^2(\cX)$ is called a \emph{Haar shift} of order (complexity) $n$ if
\[
Tf = \sum_{I\in\cD} T\ci I (\Delta\ci I^n f)
\]
where $T\ci I$ is an operator on $D^n\ci I$ such that 
\begin{align}
\label{L1xL1-norm}
(T\ci I f, g)\ci{L^2} \le |I|^{-1} \|f\|_1\|g\|_1 \qquad \forall f, g\in D\ci I. 
\end{align}
\end{df}

This definition is formally a bit more general than the one used in \cite{TB2011}, where it was assumed that $T\ci I$ can be represented as an integral operator with kernel $a\ci I$, $\supp a\ci I \subset I\times I$, $\|a\ci I\|_\infty \le |I|^{-1}$. 

Also, a Haar shift of complexity $1$ is what is often called a \emph{martingale transform}; this case is special because  the blocks $T\ci I$ are acting on mutually orthogonal subspaces. Note, that what was called the martingale transform in \cite{Tr_Comm-para2010}  is almost exactly the Haar shift of complexity  $1$, with the only difference that here we use a stricter normalization condition on the blocks $T\ci I$. 

The importance of Haar shifts comes from the fact that in the classical case of dyadic lattices in $\R^d$, any \cz operator can be represented as a weighted average (over all translations of the standard dyadic lattice in $\R^d$) of the paraproducts and the Haar shift, with the weights decreasing exponentially in the complexity of the shifts, see for example \cite{H,HPTV}. This means that the uniform boundedness of the paraproducts, see Definition \ref{df:para} below,  together with bounds on the Haar shifts that grow sub-exponentially in the complexity of the shifts imply the boundedness of the \cz operators. 

In this paper we only consider the Haar shifts of order $1$, because by modifying the filtration,  every Haar shift of complexity $n$ can be represented as a sum of $n$ Haar shifts of complexity $n$ (each with respect to its own filtration). 

Namely, if we consider filtrations $\cF^r$, $r=0, 1, \ldots, n-1$ defined by generations $\cD_{r+k}$, $k\in\Z$, then a Haar shift $T$ can be represented as the sum $T=\sum_{r=1}^n T_r$, where $T_r$ is a Haar shif of complexity $1$ (martingale transform) with respect to the filtration $\cF^r$. 
This splitting is trivial if each $I\in\cD$ has a non-trivial collection of children: in this case each interval $I$ belongs to a unique generation $\cD_j$, and thus the block $T\ci I$ can be canonically  assigned to a unique filtration. The general case is just a bit more complicated: since an interval $I$ can be in more than $1$ generation, the corresponding block $T\ci I$ can be assigned to more than $1$ filtration $\cF^r$; we just need to assign it only to one of the possible choices.   

So, an estimate for the Haar shifts of complexity $1$ give the estimate for general Haar shifts that grow linearly in complexity, which is more than enough for the estimates of \cz operators.

\begin{df}
\label{df:para}
Let $b= (b\ci I)\ci{I\in\cD}$, $b\ci I \in D\ci I$. A \emph{paraproduct} $\Pi=\Pi_b$ is an operator  on $L^2$  given by 
\[
T f := \sum_{I\in\cD} \La f\Ra\ci I b\ci I . 
\]
\end{df}

Often $b\ci I = \Delta\ci I b$ for some function $b$, but we will not distinguish between the cases when $b$ is a sequence and when it is a function.

It follows immediately from the martingale Carleson embedding theorem that the paraproduct $\Pi_b$ is bounded in $L^2$ if and only if the sequence $\{\|b\ci I\|_2^2\}_{I\in\cD}$ satisfies the \emph{Carleson measure condition}
\begin{align}
\label{CMC-para}
\left\| b \right\|\ti{Carl}:=
\sup_{I_0\in\cD} |I_0|^{-1} \sum_{I\in\cD, \, I\subset I_0} \|b\ci I\|_2^2  <\infty
\end{align}
In what follows we always normalize the paraproducts by assuming $\|b\|\ti{Carl} \le 1$. 

\subsubsection{Sparse operators}

A family $\cQ\subset \cD$ is called \emph{sparse} if for any $I_0\in\cQ$
\[
\biggl| \bigcup_{I\in\cQ, \, I\subsetneqq I_0} I \biggr| \le \frac12 |I_0|. 
\]

\begin{rem*}
The constant 1/2 is really not essential here, in the above inequality one could use any $c<1$, because a sparse collection with a bigger constant can be represented by a finitely many sparse collections with a smaller constant. This can be achieved by skipping levels, so the number of collection depends only on ratio of the constants. 
\end{rem*}

\begin{df}
Let $\cQ$ be a sparse collection. The sparse, or Lerner type operator $T\ci\cQ$ is defined by 
\[
T\ci\cQ f := \sum_{I\in\cQ}\La f\Ra\ci I \1\ci I \,.
\]
\end{df}

The importance of sparse operators comes from the result of A.~Lerner \cite{Le1} which states that for any Banach function space $X$ over $\R^d$ and for any \cz operator $T$
\[
\| T_\sharp f\|\ci X \le C(T, d) \sup_{\cD, \cQ\subset \cD} \|T\ci{\cQ} f\|\ci X
\]
where supremum is taken over all dyadic lattices $\cD$ in $\R^d$ and over all sparse collections $\cQ\subset\cD$; here $T_\sharp$ is the maximal \cz operator, 
\[
T_\sharp f(x) = \sup_{0<\e<R<\infty} \left| \int_{y:\e<|x-y|<R} K(x,y) f (y) dy \right|\,.
\]

Thus a weighted estimate for  sparse operators in $\R^d$ implies the corresponding estimate for general \cz operators.  

A corresponding result is known to be true for operators on homogeneous spaces, see \cite{Ander-Vagar_A_2_2012}. 

Note, that the above result was obtained by estimating the Haar shifts and paraproducts, so in the homogeneous situation the estimate for the sparse operators imply the estimate for Haar shifts and paraproducts (and, thus, for arbitrary Calder\'on--Zygmund operators). 

However, in the non-homogeneous case the reduction to the sparse operators is not known, so we present proofs for  Haar shifts and paraproducts. 

The proof we present for the Haar shifts  utilizes the Bellman function methods and is of independent interest. 

\subsection{Main results}

\begin{thm}
\label{t:main-01}
Let $T$ be either  a Haar shift of complexity $1$, or a sparse operator, or a paraproduct, normalized by the condition 
\begin{align}
\label{CMCpara-01}
\sup_{I\in\cD} |I|^{-1} \sum_{I'\in\cD\,,I'\subset I} \| b\ci{I'}\|_\infty^2 |I'| \le 1,  
\end{align}
and let weights $u$, $v$ satisfy the condition 
\begin{align}
\label{entr-bump-01}
\sup_{I\in\cD} \alpha\left(\bu^*\ci I/\bu\ci I\right) \bu\ci I^* \alpha\left(\bv^*\ci I/\bv\ci I\right) \bv\ci I^* :=A <\infty, 
\end{align}
where $\alpha:[1, \infty)\to \R_+$ is such that the function $t\mapsto t\alpha(t)$ is increasing and  
\begin{align}
\label{C_alpha}
C_\alpha:= \frac{1}{\alpha(1)}+\int_1^\infty \frac1{t\alpha(t)} dt <\infty. 
\end{align}
Then the operator $M_v^{1/2} T M_u^{1/2}$ is bounded in $L^2$, or equivalently
\begin{align}
\label{inte-02}
\int_\cX |T(fu)|^2 v dx \le C\int_\cX |f|^2 u dx \qquad \forall f\in L^2(u). 
\end{align}
\end{thm}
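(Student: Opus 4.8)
The plan is to handle the three operator types separately, with the sparse operator serving as the model case and the source of the key weighted estimate; the paraproduct and the Haar shift are then treated by analogous, somewhat more involved, arguments. Throughout one works with the bilinear form
\[
|(T(fu), gv)\ci{L^2}| \le C \|f\|\ci{L^2(u)} \|g\|\ci{L^2(v)},
\]
which is the dual formulation of \eqref{inte-02}. First I would set up the standard stopping-time / corona decomposition for the pair $(fu, gv)$ relative to the dyadic (filtration) structure $\cD$: fix the ``principal cubes'' where $\La fu\Ra\ci I$ or $\La gv\Ra\ci I$ roughly doubles, and reorganize the sum $\sum_{I\in\cQ}$ (or $\sum_{I\in\cD}$) according to which principal cube $I$ belongs to. This replaces the pointwise averages appearing in $T\ci\cQ$ by controlled maximal-function values on each corona.

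The heart of the matter is the following local estimate: on a cube $I$ one must bound $|I| \,\La f\Ra\ci{I,u}\, \La g\Ra\ci{I,v}\,\bu\ci I\,\bv\ci I$ (the $u,v$-averages of $f,g$) in a way that, after summing over the sparse family, closes up against $\|f\|\ci{L^2(u)}\|g\|\ci{L^2(v)}$. The naive $A_2$-type bound $\bu\ci I\bv\ci I \le A$ fails; instead one wants to exploit the entropy bump \eqref{entr-bump-01}. The mechanism is: split the sparse family according to the dyadic size of the ratios $\bu^*\ci I/\bu\ci I$ and $\bv^*\ci I/\bv\ci I$. On the piece where these ratios are comparable to $2^k$ and $2^m$, the bump gives $\bu\ci I^*\bv\ci I^* \lesssim A / (\alpha(2^k)\alpha(2^m))$, so $\bu\ci I\bv\ci I \lesssim A\, 2^{-k-m}/(\alpha(2^k)\alpha(2^m))$. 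The extra decay $2^{-k-m}$ should be spent to run a Carleson-embedding / $L\log L$ maximal-function argument on the corresponding sub-collection (using $\bu^*\ci I \approx \|M\1\ci I u\|\ci{L^1(I)}$ and the boundedness of $M\colon L\log L \to L^1$ on a cube), while the remaining factor $1/(\alpha(2^k)\alpha(2^m))$ is summable in $k,m$ precisely because $\sum_k 1/\alpha(2^k) \asymp C_\alpha < \infty$ by \eqref{C_alpha} (the monotonicity of $t\mapsto t\alpha(t)$ is what makes the integral comparison to the dyadic sum legitimate). Summing over $k,m$ then produces the constant $C\, A\, C_\alpha^2$.

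For the paraproduct $\Pi_b$ normalized by \eqref{CMCpara-01}, the argument is the same in spirit but uses the martingale Carleson embedding theorem directly: one estimates $\sum_I \La f u\Ra\ci I (b\ci I, gv)\ci{L^2}$ by Cauchy--Schwarz against $\sum_I |\La f u\Ra\ci I|^2 |I|$ (a Carleson sum controlled by $\|f\|\ci{L^2(u)}^2$ times a sup of $\bu$-type averages) and $\sum_I \|b\ci I\|_\infty^2 |I| \langle |g| v\rangle\ci I^2 / \dots$; again one dyadically decomposes in the entropy ratios and the bump gives the requisite decay, with $C_\alpha$ appearing from the geometric sum. For the Haar shift of complexity $1$ (martingale transform), this is where I expect the real work: the blocks $T\ci I$ act on the mutually orthogonal martingale-difference subspaces $D\ci I$ with the $L^1\times L^1$ normalization \eqref{L1xL1-norm}, and the authors flag that they use a genuine Bellman-function argument here. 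I would look for a Bellman function of the variables $(\La f\Ra\ci I, \La g\Ra\ci I, \bu\ci I, \bv\ci I, \bu^*\ci I, \bv^*\ci I)$ (the last two tracked as supermartingale-type ``potentials'' with a size condition coming from the $L\log L$ maximal function) whose concavity/size estimates, when run down the tree, absorb the martingale-difference energy on each atom against $A$ and $C_\alpha^2$.

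\textbf{Main obstacle.} The principal difficulty is the Haar-shift / martingale-transform case: unlike the sparse operator, there is no pointwise positivity to exploit, so one cannot simply dominate by a positive sparse form, and one must instead build a Bellman function that simultaneously (i) controls the off-diagonal martingale pairing via \eqref{L1xL1-norm}, (ii) correctly encodes the entropy quantities $\bu^*\ci I$, $\bv^*\ci I$ as auxiliary variables obeying the right dynamics along the filtration (these are $L\log L$ maximal averages, hence behave like a controlled increasing process rather than a martingale), and (iii) yields a supersolution inequality whose ``loss per step'' is summable thanks only to $C_\alpha < \infty$ — i.e. the function must be finite but its concavity deficit integrates the $1/(t\alpha(t))$ weight. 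Verifying the required differential inequality for this Bellman function, especially getting the dependence on $\alpha$ to come out as a single power (and understanding why the one-sided bound in \eqref{1sided-01} costs $\alpha^2$) is the technical crux. A secondary, more routine obstacle is making all of this work in the non-homogeneous setting, where $M\colon L\log L(I)\to L^1(I)$ and the Carleson embedding still hold but doubling is unavailable, so every estimate must be phrased purely in terms of $\La\cdot\Ra\ci I$ and the filtration.
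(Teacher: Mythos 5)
Your text is a plan rather than a proof, and the two places where the plan has to be made precise are exactly where it either stops or breaks. For the Haar shift you explicitly defer the construction of the Bellman function and call its verification ``the technical crux,'' so that case is simply not done. In the paper this case rests on the embedding estimate of Theorem \ref{t:H1_bumps-01}, $\sum_{I\in\cD} \|\Delta\ci I(fu)\|_{L^1(I)}^2\,|I| \big/ \bigl(\alpha(\bu^*\ci I/\bu\ci I)\bu^*\ci I\bigr) \le 36\,C_\alpha\|f\|_{L^2(u)}^2$, whose proof needs two ingredients you do not supply: a one-variable construction of $m$ with $-m'(t)\ge 1/(t\alpha(t))$ and $m\le 4C_\alpha$ (Lemma \ref{l:m'}), and the quantitative concavity of $\bu^*$ viewed as the Lorentz functional $N\mapsto\int\psi_0(N(t))\,dt$ of the distribution function (Lemma \ref{l:conv-01}, Corollary \ref{c:conv-02}), which is what lets $\bu^*\ci I$ enter the Bellman dynamics at all. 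Moreover, no Bellman function in both weights (your six-variable ansatz) is needed: the bump \eqref{entr-bump-01} decouples the bilinear form by Cauchy--Schwarz into two separate one-weight embeddings, one for $u$ and one for $v$; the same decoupling disposes of the sparse and paraproduct cases once the analogous embedding $\sum_{I}|\La fu\Ra\ci I|^2 a\ci I |I|\big/\bigl(\alpha(\bu^*\ci I/\bu\ci I)\bu^*\ci I\bigr)\le 4C_\alpha\|a\|_{\mathrm{Carl}}\|f\|^2_{L^2(u)}$ (Theorem \ref{t:embed-03}) is available.

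There is also a quantitative gap in the route you propose for the sparse operator. Freezing $\bu^*\ci I/\bu\ci I\approx 2^k$, $\bv^*\ci I/\bv\ci I\approx 2^m$ and pulling out $\bu\ci I\bv\ci I\lesssim A\,2^{-k-m}/(\alpha(2^k)\alpha(2^m))$ loses a square root when you actually close the estimate: writing $\bu\ci I\bv\ci I|I|=(\bu\ci I\bv\ci I)^{1/2}\mu(I)^{1/2}\nu(I)^{1/2}$ with $d\mu=u\,dx$, $d\nu=v\,dx$, and applying Cauchy--Schwarz, the two factors are controlled by the weighted Carleson embedding whose Carleson constants on this piece are $\approx 2^k$ and $2^m$ (from $\sum_{I\subset J,\,I\in\cQ}\bu\ci I|I|\lesssim\bu^*\ci J|J|\le 2^{k+1}\bu\ci J|J|$), so each $(k,m)$-piece is bounded by $C\bigl(A/(\alpha(2^k)\alpha(2^m))\bigr)^{1/2}\|f\|_{L^2(u)}\|g\|_{L^2(v)}$: the decay $2^{-k-m}$ is entirely consumed, and the sum over $k,m$ requires $\sum_k\alpha(2^k)^{-1/2}<\infty$, which does not follow from $C_\alpha<\infty$ (take $\alpha(t)=\log^2t$). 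This square-root loss is precisely the mechanism that produces the exponent $2$ in the one-sided hypothesis \eqref{1sided-02} and in Theorem \ref{t:TestLernOp_p=2}; so, as sketched, your argument proves the two-sided theorem only under a strictly stronger integrability condition on $\alpha$. To recover first powers one must keep the gain inside the sum before any Cauchy--Schwarz, i.e.\ prove the weighted embedding of Theorem \ref{t:embed-03} with the denominator $\alpha(\bu^*\ci I/\bu\ci I)\bu^*\ci I$ (whose Bellman proof uses the pointwise bound $P(I)\le\bu^*\ci I/\bu\ci I$ on the Carleson potential), after which no splitting in $k,m$ and no corona decomposition are needed at all.
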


\begin{rem*}
Note, that in the homogeneous case, when 
\[
|I'|/|I| \ge\delta >0 \qquad \forall I\in \cD, \ \forall I'\in\ch(I), 
\]
$\|\Delta \ci I f\|^2_\infty |I| \asymp \|\Delta\ci I f\|_2^2$ uniformly for all $f\in L^2$ and for all $I\in\cD$ (this in fact can be used as a definition of homogeneous lattices), so in the homogeneous case the paraproduct   normalization condition \eqref{CMCpara-01} is equivalent up to a constant to the classical normalization condition \eqref{CMC-para}. 

Condition \eqref{CMC-para}, as we mentioned before, is equivalent to the bound on the norm of the paraproduct in the non-weighter $L^2$; our condition \eqref{CMCpara-01} is a bit stronger in the general non-homogeneous case. 
\end{rem*}

\begin{thm}
Let $T$ be a sparse (a Lerner type) operator, and let the weights $u$, $v$ satisfy the separated bump conditions
\begin{align}
\label{1sided-02}
\sup_{I\in\cD} \alpha\left(\bu^*\ci I/\bu\ci I\right)^2\bu^*\ci I \bv\ci I <\infty, \qquad \sup_{I\in\cD} \alpha\left(\bv^*\ci I/\bv\ci I\right)^2\bv^*\ci I \bu\ci I <\infty,  
\end{align}
where again $\alpha:[1, \infty)\to \R_+$ is an increasing function satisfying \eqref{inte-02}.

Then the operator $M_v^{1/2} T M_u^{1/2}$ is bounded in $L^2$, i.e.
\begin{align*}
\int_\cX |T(fu)|^2 v dx \le C\int_\cX |f|^2 u dx \qquad \forall f\in L^2(u). 
\end{align*}
\end{thm}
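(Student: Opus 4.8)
The plan is to estimate the bilinear form $\La T\ci\cQ(fu), gv\Ra = \sum_{I\in\cQ} \La fu\Ra\ci I \La gv\Ra\ci I |I|$ by exploiting the one-sided structure of the bumps in \eqref{1sided-02}: one bump controls the $u$-side, the other the $v$-side, and the two are never multiplied together. First I would symmetrize, writing $2|\La T\ci\cQ(fu), gv\Ra| \le S_u + S_v$ where
\begin{align*}
S_u := \sum_{I\in\cQ} |I| \La |f| u\Ra\ci I \La |g| v\Ra\ci I \cdot \frac{\bu^*\ci I}{\bu\ci I},
\qquad
S_v := \sum_{I\in\cQ} |I| \La |f| u\Ra\ci I \La |g| v\Ra\ci I \cdot \frac{\bv^*\ci I}{\bv\ci I},
\end{align*}
so it suffices to bound, say, $S_u$, the term $S_v$ being symmetric. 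On the $v$-side I replace $\La |g| v\Ra\ci I$ by $\bv\ci I \La |g|\Ra\ci{I,v\,dx}$ where the latter average can be dominated by the maximal function $M_v g$ with respect to the measure $v\,dx$; since $\{M_v g\}$-type maximal operators are bounded on $L^2(v)$, the factor $\La|g|\Ra\ci{I,v\,dx}$ can be summed against the sparseness of $\cQ$ in the usual way (each $I$ carries a disjoint ``major subset'' $E_I$ with $|E_I| \ge \tfrac12|I|$). The point is that after this step we are left with $\sum_{I\in\cQ} |E_I| \,\La|f|u\Ra\ci I \,\bv\ci I\, \big(\tfrac{\bu^*\ci I}{\bu\ci I}\big)\, (M_v g)\ci I$, and using \eqref{1sided-02} in the form $\big(\tfrac{\bu^*\ci I}{\bu\ci I}\big)\bu^*\ci I \bv\ci I \le C \alpha(\bu^*\ci I/\bu\ci I)^{-1}\cdot \alpha(\bu^*\ci I/\bu\ci I)^2 \bu^*\ci I \bv\ci I \le C\,\alpha(\bu^*\ci I/\bu\ci I)$ is the wrong direction; instead the correct bookkeeping keeps one power of $\alpha$ as a summable penalty and trades the other against an entropy/Carleson estimate.

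The heart of the matter is the following entropy-type embedding: for a sparse family $\cQ$,
\begin{align*}
\sum_{I\in\cQ} |I|\,\alpha\!\left(\frac{\bu^*\ci I}{\bu\ci I}\right)^{-2} \frac{\bu^*\ci I}{\bu\ci I}\,\La|f|\Ra\ci{I,u\,dx}^2 \ \bu\ci I \ \le \ C\,C_\alpha^{\,?}\, \|f\|_{L^2(u)}^2,
\end{align*}
which should follow from a weighted Carleson embedding theorem once one checks the relevant Carleson measure condition; the role of $C_\alpha<\infty$ is exactly to sum the tails $\sum 1/(t\alpha(t))$ coming from the dyadic level-sets where $\bu^*\ci I/\bu\ci I \asymp 2^k$. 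Concretely I would stratify $\cQ$ according to the value of $\bu^*\ci I/\bu\ci I$, apply the Sawyer-type testing/Carleson estimate on each stratum (where the entropy factor is essentially constant), and then sum over strata using $C_\alpha$. Then $S_u \le \big(\sum_{I} |E_I| \,\alpha(\cdot)^{-2}\tfrac{\bu^*}{\bu}\,\La|f|\Ra_{u}^2\,\bu\big)^{1/2}\big(\sum_I |E_I|\,\alpha(\cdot)^2\bu^* \bv\, (M_v g)\ci I^2\big)^{1/2}$ by Cauchy--Schwarz applied atom-by-atom after inserting the weight $\alpha(\cdot)^{\pm 1}$; the first factor is controlled by the embedding above by $C\|f\|_{L^2(u)}$, and in the second factor $\alpha(\cdot)^2\bu^*\bv \le A$ by \eqref{1sided-02}, leaving $\big(A\sum_I |E_I| (M_vg)\ci I^2\big)^{1/2} \le \big(CA \|M_v g\|_{L^2(v)}^2\big)^{1/2} \le CA^{1/2}\|g\|_{L^2(v)}$ by disjointness of the $E_I$ and the maximal theorem on $L^2(v)$.

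I expect the main obstacle to be the weighted Carleson embedding with the entropy penalty, i.e.\ verifying that the measure $I\mapsto |E_I|\,\alpha(\bu^*\ci I/\bu\ci I)^{-2}\,(\bu^*\ci I/\bu\ci I)\,\bu\ci I$ satisfies the $u\,dx$-Carleson condition with constant $O(C_\alpha)$ (or some controlled power of it); this is where the monotonicity hypothesis on $t\mapsto t\alpha(t)$ and the convergence of $C_\alpha$ do the real work, and where the exponent $2$ on $\alpha$ in \eqref{1sided-02} turns out to be exactly what is needed (one power of $\alpha^{-1}$ to make the stratified sum converge via $C_\alpha$, one power of $\alpha$ left over to absorb into the bump constant $A$). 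The remaining pieces — symmetrization, passing to maximal functions, and Cauchy--Schwarz with the split weight $\alpha^{\pm1}$ — are routine once this embedding is in hand. For the sparse operator all of this is clean because the off-diagonal interactions collapse to the diagonal $\sum_{I\in\cQ}$; this is precisely why the theorem is stated only for sparse (Lerner type) operators and not for general Haar shifts.
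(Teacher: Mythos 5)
The plan breaks down at the step you yourself flag as the heart of the matter. The ``entropy-type embedding'' you need after the Cauchy--Schwarz with split weights $\alpha^{\pm1}$, namely
\begin{align*}
\sum_{I\in\cQ}|I|\,\alpha(\rho\ci I)^{-2}\,\rho\ci I\,\bu\ci I\,\La|f|\Ra_{I,u\,dx}^2\ \lesssim\ C\,\|f\|_{L^2(u)}^2,\qquad \rho\ci I:=\bu^*\ci I/\bu\ci I,
\end{align*}
is false under the stated hypotheses on $\alpha$. Testing on $f=\1\ci J$ shows it forces the Carleson condition $\sum_{I\in\cQ,\,I\subset J}\alpha(\rho\ci I)^{-2}\bu^*\ci I\,|I|\lesssim\int_J u$, and this fails: take $J=[0,1]$, $\cQ=\{[0,2^{-j}]\}_{j\ge0}$ (a sparse family) and $u=\delta^{-1}\1\ci{[0,\delta]}$; then $\bu^*\ci{I_j}|I_j|\asymp\rho\ci{I_j}\asymp 1+\log(2^{-j}/\delta)$, so the left-hand side is $\asymp\sum_{m\le\log_2(1/\delta)}m\,\alpha(m)^{-2}$, which blows up as $\delta\to0$ already for $\alpha(t)=t$ (and for $\alpha(t)=\log^{2}(e+t)$), while $\int_J u=1$. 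The weight that \emph{is} summable is $1/\bigl(\rho\ci I\,\alpha(\rho\ci I)\bigr)$, i.e.\ the denominator $\alpha(\rho\ci I)\bu^*\ci I$ of the two-sided embedding (Theorem \ref{t:embed-03}); your weight exceeds it by the factor $\rho\ci I^{2}/\alpha(\rho\ci I)$, and stratifying by $\rho\ci I\asymp 2^k$ cannot recover this, since within a stratum sparseness only yields $\sum_{I\subset J}\bu\ci I|I|\lesssim\bu^*\ci J|J|$ and the leftover factor $2^k\alpha(2^k)^{-2}$ is not summable when merely $C_\alpha<\infty$.

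The $g$-side is not routine either (and, as a smaller point, your displayed Cauchy--Schwarz is off by a factor $\bv\ci I^{1/2}$: the termwise product of your two square roots produces $\bu^*\ci I\bv\ci I^{1/2}\La|f|\Ra_{I,u\,dx}\,(M_vg)$, whereas the term of $S_u$ is $\bu^*\ci I\bv\ci I\La|f|\Ra_{I,u\,dx}\La|g|\Ra_{I,v\,dx}$). After passing to $M_vg$ you must control sums of the form $\sum_{I\in\cQ}\nu(I)\,\inf_I(M_vg)^2$ with $d\nu=v\,dx$ by $\|g\|_{L^2(v)}^2$, and that requires the family to be Carleson with respect to $\nu$; Lebesgue sparseness gives no such thing, since $\bv\ci I|E_I|$ need not be comparable to $\nu(E_I)$ even though the $E_I$ are disjoint. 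Supplying exactly this $\nu$-Carleson structure is what the paper's proof is built around: it first reduces to the Sawyer testing conditions \eqref{SawTestCond} (legitimate because a sparse operator is a positive dyadic operator), so that only $f=\1\ci{I_0}$ must be treated, and then runs a stopping-time construction on the $u$-averages together with the double stratification $\rho\ci I\asymp 2^k$ and $\bu\ci I\bv\ci I\asymp 2^{-n}B_k$, $B_k=2^{-k}\alpha(2^k)^{-2}A$; the $\nu$-Carleson property \eqref{Carl-v-01} of the stopping family comes from this stratification, one power of $\alpha$ is spent summing over $k$ via Lemma \ref{sum-alpha_2^k}, and the other compensates the loss $\bu^*\ci{I_0}\le 2^{k+1}\bu\ci{I_0}$ inside a stratum --- which is exactly where the exponent $2$ in \eqref{1sided-02} enters. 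So your accounting of ``one $\alpha$ for $C_\alpha$, one $\alpha$ for the bump constant'' is the right intuition, but a single global Cauchy--Schwarz on the bilinear form cannot implement it; the missing ideas are the reduction to testing and the corona/stopping-time decomposition.
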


\section{Lorentz spaces and class  \texorpdfstring{$L \log L$}{L log L}}
\label{s:H1-norm}

Let us recall that the Lorentz space $\Lambda_\psi= \Lambda_\psi(\cX,\mu)$ is defined as the set of measurable functions $f$ on $\cX$ such that 
\begin{align}
\label{La_psi}
\| f\|\ci{\Lambda_\psi} := \int_0^{\mu(\cX)} f^*(s) d\psi(s) < \infty;
\end{align}
here $f^*$ is the non-decreasing rearrangement of the function $|f|$, and $\psi$ is a quasiconcave function. 

Recall, see \cite[Ch.~2, Definition 5.6]{Bennett-Sharpley_1988} that a function $\psi:[0,\infty) \to [0,\infty)$ \emph{quasiconcave} if

\begin{enumerate}
\item $\psi$ is increasing;
\item $\psi(s) = 0$ iff $s=0$;
\item $s\mapsto \psi(s)/s$ is decreasing. 
\end{enumerate}

Note, that conditions \cond1 and \cond3 imply that the function $\psi$ is continuous for all $s>0$; it can have a jump discontinuity at $s=0$, but in this paper we will will only consider continuous functions $\psi$. Note also, that an increasing concave $\psi$ satisfying  $\psi(0)=0$ is also quasiconcave. 

The following simple proposition, see \cite[Ch.~2, Proposition 5.10]{Bennett-Sharpley_1988}, shows that without loss of generality we can assume that the function $\psi $ is concave.

\begin{prop}
\label{p:LCM}
If $\psi$ is quasiconcave, then the least concave majorant $\wt\psi$ of $\psi$ satisfies 
\begin{align}
\label{LCM}
\frac12 \wt\psi(s) \le \psi(s) \le \wt\psi(s) \qquad \forall s\ge 0. 
\end{align}
\end{prop}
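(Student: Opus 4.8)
The right-hand inequality $\psi(s)\le\wt\psi(s)$ is immediate from the definition of $\wt\psi$ as a majorant of $\psi$, so the whole content of the proposition is the estimate $\wt\psi(s)\le 2\psi(s)$. The plan is to use the elementary description of the least concave majorant as a ``concave hull'': for $\psi:[0,\infty)\to[0,\infty)$,
\[
\wt\psi(s) \;=\; \sup\Big\{ \textstyle\sum_{i} \lambda_i\,\psi(x_i) \;:\; \lambda_i\ge 0,\ \textstyle\sum_i\lambda_i = 1,\ \textstyle\sum_i\lambda_i x_i = s,\ \text{finite sums} \Big\}.
\]
That the right-hand side is a concave function majorizing $\psi$ is straightforward, and that it is the \emph{least} such follows from Jensen's inequality: any concave $g\ge\psi$ satisfies $g(s)=g(\sum_i\lambda_i x_i)\ge\sum_i\lambda_i g(x_i)\ge\sum_i\lambda_i\psi(x_i)$. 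Given this, it suffices to bound every admissible convex combination by $2\psi(s)$ and pass to the supremum.

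So fix $\lambda_i\ge0$ with $\sum_i\lambda_i=1$ and $\sum_i\lambda_i x_i = s$, and split the index set into $A=\{i:x_i\le s\}$ and $B=\{i:x_i> s\}$. For $i\in A$ the monotonicity of $\psi$ (property (i)) gives $\psi(x_i)\le\psi(s)$; for $i\in B$ the fact that $t\mapsto\psi(t)/t$ is decreasing (property (iii)) gives $\psi(x_i)\le \psi(s)\,x_i/s$. Hence
\[
\sum_i \lambda_i\,\psi(x_i) \;\le\; \psi(s)\sum_{i\in A}\lambda_i \;+\; \frac{\psi(s)}{s}\sum_{i\in B}\lambda_i x_i \;\le\; \psi(s)\Big( 1 + \frac{1}{s}\sum_i\lambda_i x_i\Big) \;=\; 2\,\psi(s),
\]
where we used $\sum_{i\in A}\lambda_i\le 1$ and $\sum_{i\in B}\lambda_i x_i\le\sum_i\lambda_i x_i = s$ (all terms being nonnegative). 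Taking the supremum over all such combinations yields $\wt\psi(s)\le 2\psi(s)$, which is \eqref{LCM}.

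The only point needing a little care is the concave-hull representation of $\wt\psi$, together with the observation that the supremum is finite (so that $\wt\psi$ is well defined and real-valued); finiteness in fact already follows from the same two quasiconcavity bounds, which show the supremum is at most $2\psi(s)$. Alternatively one may simply invoke the standard fact, e.g.\ from \cite[Ch.~2]{Bennett-Sharpley_1988}, that on an interval the least concave majorant of a function at a point equals the supremum of the values at that point of chords of the graph lying over it (the one-dimensional Carathéodory reduction to two points); applying the estimate of the previous paragraph to a single chord $s_0\le s\le s_1$, $s=\theta s_0+(1-\theta)s_1$, gives $\theta\psi(s_0)+(1-\theta)\psi(s_1)\le(\theta+(1-\theta)s_1/s)\psi(s)\le 2\psi(s)$, and one concludes as before. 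The boundary case $s=0$ is trivial since $\psi(0)=0$ by property (ii) forces every admissible combination, hence $\wt\psi(0)$, to vanish.
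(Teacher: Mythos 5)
Your argument is correct. Note that the paper itself offers no proof of this proposition: it is quoted directly from \cite[Ch.~2, Proposition~5.10]{Bennett-Sharpley_1988}, so what you have written is a self-contained proof of the cited fact. Your route through the concave-hull representation (supremum over finite convex combinations) is sound: the split into $x_i\le s$ and $x_i>s$, using monotonicity of $\psi$ on the first set and the decrease of $t\mapsto\psi(t)/t$ on the second, gives exactly the factor $2$, and you correctly handle both the finiteness of the supremum and the degenerate point $s=0$. This is essentially the classical argument; the textbook proof is the two-point chord version that you sketch at the end (in one dimension every point of the concave hull is realized by a chord over two points $s_0\le s\le s_1$), which avoids having to verify that the finite-combination supremum is concave, is finite, and coincides with the least concave majorant. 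Either variant is acceptable; the two-point version is the leaner one, while your general-combination version makes the Jensen-type minimality of the hull explicit.
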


If the function $\psi $ is concave, the expression \eqref{La_psi} indeed defines a norm on the space of functions $f$ satisfying \eqref{La_psi}.

In this paper $\mu$ will always be a probability measure without atoms, particularly the normalized Lebesgue measure on an interval.

If $N=N_f$ is the distribution function of $f$, 
\[
N_f(t) := \mu\{ x\in\cX: |f(x)|>t\}, 
\]
then making the change of variables $s=N(t)$ and integrating by parts we can rewrite \eqref{La_psi} as 
\begin{align}
\label{La_psi-01}
\| f\|\ci{\Lambda_\psi} = \int_0^\infty \psi(N(t)) dt = \int_0^\infty N(t) \Psi(N(t)) dt ;
\end{align}
here we define $\Psi(s):=\psi(s)/s$. This calculation is definitely justified for continuous $f^*$, so $N$ is the inverse of $f^*$; approximating a general $f^*$ by an increasing sequence of continuous decreasing functions gives \eqref{La_psi-01} for all $f$.

\subsection{Comparison of Lorentz spaces with other rearrangement invariant spaces}
This section, except for Lemma \ref{l:Orl-Lor} below, is not necessary for the proofs. We present here a well-known facts helping to put the results in perspective, so it could be still beneficial for the reader.

Recall that for a rearrangement invariant Banach function space $X$, its \emph{fundamental function} $\psi=\psi\ci X$ is defined as 
\begin{align}
\label{fund-funct}
\psi\ci X(s) = \| \1\ci E\|\ci X, \qquad \mu(E)=s. 
\end{align}

Note, that the fundamental function for the Lorentz space $\Lambda_\psi$ is exactly $\psi$. 

It is an easy calculation (see also \cite[Ch.~4, Lemma 8.17]{Bennett-Sharpley_1988}) that for the Orlicz space $L^\phi$ equipped with the Luxemburg norm its fundamental function $\psi$ is given by 
\begin{align}
\label{psi-L^p}
\psi(s) = 1/\Phi^{-1}(1/s);
\end{align}
in particular, the fundamental function for $L^p$ spaces is given by $t^{1/p}$. 

For ``bumping'' the Muckenhoupt condition the spaces $\Lambda_\psi$ are easier to work with  than the Orlicz spaces $L^\Phi$ traditionally used for this purpose. 

To be able to replace the Orlicz norm, one has to estimate the norm in $L^\Phi$ below by the norm in an appropriate  Lorentz space $\Lambda_\psi$. 

In \cite{NRTV}, \cite{NRTV1} the following comparison of the Lorentz and Orlicz  norms was obtained. 

\begin{lm}
\label{l:Orl-Lor}
Let the underlying measure space $(\cX, \mu)$ be a probability space, i.e.~$\mu(\cX)=1$. 
Let $\Phi$ be a Young function such that 
\[
\int^\infty \frac{dt}{\Phi(t)} <\infty, 
\]
and let the function $\Psi$ on $(0,1]$ be defined parametrically 
\[
\Psi(s) := \Phi'(t) \qquad \text{for}\quad s=\frac{1}{\Phi(t)\Phi'(t)}. 
\]
Then the function $\psi$, $\psi(s) = s \Psi(s)$ is quasiconcave, satisfies
\[
\int_0^1 \frac{ds}{\psi(s)} < \infty, 
\]
and there exists  $C<\infty$ such that $\|f\|\ci{\Lambda_\psi} \le C \|f\|_{L^\Phi}$ for all measurable $f$, 
\end{lm}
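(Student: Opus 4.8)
\textbf{Proof proposal for Lemma \ref{l:Orl-Lor}.}

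The plan is to compare the two norms by comparing the underlying ``building blocks'': for the Orlicz side this is the behaviour of the Luxemburg norm of level sets and simple functions, and for the Lorentz side it is the explicit formula \eqref{La_psi-01}. First I would check that the parametrically defined $\Psi$ is well posed: the map $t\mapsto s(t) = 1/(\Phi(t)\Phi'(t))$ is strictly decreasing from $(t_0,\infty)$ onto $(0, s_0]$ for a suitable starting point (using that $\Phi$ is a Young function, so $\Phi'$ is increasing and $\Phi, \Phi'$ are positive for large $t$; one may have to restrict to the range where things are monotone and then extend $\psi$ by a linear function near $0$ without changing the conclusion), hence $\Psi$ and $\psi(s) = s\Psi(s) = \Phi'(t)/(\Phi(t)\Phi'(t)) = 1/\Phi(t)$ — wait, I should record this cleanly: along the parametrization, $\psi(s) = s\Psi(s) = \frac{\Phi'(t)}{\Phi(t)\Phi'(t)} = \frac{1}{\Phi(t)}$, so $\psi$ is expressed very simply in $t$. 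That already makes quasiconcavity transparent: as $s\downarrow 0$, $t\uparrow\infty$, so $\psi(s) = 1/\Phi(t)\downarrow 0$; $\psi$ is increasing in $s$ because $\Phi$ is decreasing in $t$ and $t$ is decreasing in $s$; and $\psi(s)/s = \Psi(s) = \Phi'(t)$ is increasing in $t$, hence decreasing in $s$, which is condition \cond3. So conditions \cond1--\cond3 of quasiconcavity follow directly from the defining relations.

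Next I would verify the integrability $\int_0^1 ds/\psi(s) < \infty$. Changing variables $s = 1/(\Phi(t)\Phi'(t))$, we get $ds = -\frac{(\Phi'(t))^2 + \Phi(t)\Phi''(t)}{(\Phi(t)\Phi'(t))^2}\,dt$, and since $\psi(s) = 1/\Phi(t)$,
\[
\int_0^{s_0} \frac{ds}{\psi(s)} = \int_{t_0}^\infty \Phi(t)\cdot \frac{(\Phi'(t))^2 + \Phi(t)\Phi''(t)}{(\Phi(t)\Phi'(t))^2}\,dt = \int_{t_0}^\infty \left( \frac{1}{\Phi(t)} + \frac{\Phi''(t)}{(\Phi'(t))^2}\right) dt.
\]
The first term integrates by the hypothesis $\int^\infty dt/\Phi(t) < \infty$; the second term is $\int_{t_0}^\infty \big(-\frac{d}{dt}\frac{1}{\Phi'(t)}\big)\,dt = 1/\Phi'(t_0) < \infty$ (if $\Phi$ is not $C^2$ one replaces this by the monotone-function version: $\frac{1}{\Phi'}$ is decreasing, so its total variation on $(t_0,\infty)$ is at most $1/\Phi'(t_0)$). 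This gives the claimed finiteness.

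For the norm inequality $\|f\|\ci{\Lambda_\psi} \le C\|f\|_{L^\Phi}$, by homogeneity it suffices to show $\|f\|\ci{\Lambda_\psi} \le C$ whenever $\int_\cX \Phi(|f|)\,d\mu \le 1$, i.e.\ $\|f\|_{L^\Phi}\le 1$. Using \eqref{La_psi-01} with the distribution function $N(\lambda) = \mu\{|f|>\lambda\}$, we have $\|f\|\ci{\Lambda_\psi} = \int_0^\infty \psi(N(\lambda))\,d\lambda$. Chebyshev in the Orlicz form gives $N(\lambda) \le 1/\Phi(\lambda)$ for all $\lambda$ (since $\Phi(\lambda) N(\lambda) \le \int \Phi(|f|) \le 1$), and also $N(\lambda)\le 1$. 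Because $\psi$ is increasing, $\psi(N(\lambda)) \le \psi(\min\{1, 1/\Phi(\lambda)\})$. Splitting the integral at $\lambda = t_0$ (where $1/\Phi(t_0) = s_0 \le 1$): on $[0,t_0]$ we bound $\psi(N(\lambda)) \le \psi(s_0) \le \psi(1) < \infty$, contributing $\le t_0\psi(1)$; on $[t_0,\infty)$ we bound $\psi(N(\lambda)) \le \psi(1/\Phi(\lambda)) = 1/\Phi(\lambda)$ by the simple identity above, and $\int_{t_0}^\infty d\lambda/\Phi(\lambda) < \infty$. Hence $\|f\|\ci{\Lambda_\psi} \le t_0\psi(1) + \int_{t_0}^\infty d\lambda/\Phi(\lambda) =: C < \infty$, which is the desired bound. \emph{The main obstacle} I anticipate is purely technical bookkeeping at the lower endpoint: ensuring the parametrization $t\mapsto s(t)$ is a genuine decreasing bijection onto an interval $(0,s_*]$ and that extending/redefining $\psi$ near $s=0$ (to guarantee $\psi$ is defined and quasiconcave on all of $(0,1]$, not just where the parametrization lives) does not disturb either the integrability $\int_0^1 ds/\psi < \infty$ or the norm comparison; this requires only that $\Phi$ be eventually nice (convex, $\Phi'$ eventually positive and increasing), which is part of being a Young function, but it should be stated carefully.
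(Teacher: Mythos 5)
Your verification of quasiconcavity and of $\int_0^1 ds/\psi(s)<\infty$ is correct (the change of variables together with $\Phi''/(\Phi')^2=-(1/\Phi')'$ is exactly the right computation, and the non-$C^2$ caveat is harmless). The norm comparison, however, contains a genuine error. There is no ``simple identity'' $\psi(1/\Phi(\lambda))=1/\Phi(\lambda)$: the parametric definition gives $\psi(s)=1/\Phi(t)$ only at $s=1/(\Phi(t)\Phi'(t))$, not at $s=1/\Phi(t)$. Since $\Phi'(\lambda)\ge 1$ for large $\lambda$, one has $1/\Phi(\lambda)\ge 1/(\Phi(\lambda)\Phi'(\lambda))$, hence $\psi(1/\Phi(\lambda))\ge 1/\Phi(\lambda)$, and the gap can be large. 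Concretely, for $\Phi(t)=t\log^2 t$ the equation $\Phi(t)\Phi'(t)=\Phi(\lambda)$ gives $t\approx \lambda/\log^2\lambda$, so $\psi(1/\Phi(\lambda))=1/\Phi(t)\approx 1/\lambda$ and $\int^\infty \psi(1/\Phi(\lambda))\,d\lambda=\infty$: the integral you claim is finite is actually divergent. More to the point, no argument that uses only the Chebyshev consequence $N(\lambda)\le 1/\Phi(\lambda)$ of $\|f\|_{L^\Phi}\le 1$ can close this step, because $\psi$ is calibrated against the product $\Phi\Phi'$, not against $\Phi$ alone.

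The repair (which is the argument of \cite{NRTV}, \cite{NRTV1}, to which the present paper refers instead of giving a proof) uses the full modular information $\int_0^\infty \Phi'(\lambda)N(\lambda)\,d\lambda=\int_\cX \Phi(|f|)\,d\mu\le 1$, not just its weak-type consequence. For $\lambda\ge t_0$ split according to whether $N(\lambda)\le s(\lambda):=1/(\Phi(\lambda)\Phi'(\lambda))$ or not. In the first case monotonicity of $\psi$ and the parametric identity used at the correct point give $\psi(N(\lambda))\le\psi(s(\lambda))=1/\Phi(\lambda)$, which integrates by the hypothesis $\int^\infty d\lambda/\Phi(\lambda)<\infty$. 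In the second case use that $\Psi$ is decreasing: $\psi(N(\lambda))=N(\lambda)\Psi(N(\lambda))\le N(\lambda)\Psi(s(\lambda))=N(\lambda)\Phi'(\lambda)$, and the integral of this over $\lambda$ is at most $1$ by the normalization above. Together with your bound $t_0\,\psi(1)$ on $[0,t_0]$ this yields $\|f\|_{\Lambda_\psi}\le C\|f\|_{L^\Phi}$. So the real obstacle is not the endpoint bookkeeping you flagged, but the two-regime splitting that exploits why $\Psi$ is defined through $s=1/(\Phi\Phi')$ in the first place.
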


In the journal version of \cite{NRTV1} it is shown that for Young's functions with extra regularity the norms are equivalent.

\subsection{Space \texorpdfstring{$L\log L$}{L log L} as a Lorentz space}
\label{s:LlogL-Lorentz}

Recall that the space $L \log L$ is usually defined as the Orlicz space with the Young function $\Phi_0(t) =t\log^+(t)$ (the function $\Phi(t)= t\log(1+t)$ is also used and gives an equivalent norm). If the underlying measure space $(\cX, \mu)$ satisfies $\mu(\cX)=1$, the space $L\log L$ can also be defined as the Lorentz space $\Lambda_{\psi_0}$ with $\psi_0(s) = s\ln(e/s)$, $s\in[0,1]$,  see \cite[Ch.~4, Sections 6, 8]{Bennett-Sharpley_1988}. 

If the underlying space is a unit interval $I_0$ (with Lebesgue measure), then it is well-known that $\|Mf\|_1$, where $M$ is the Hardy--Littlewood maximal function, defines an equivalent norm on $L\log L$. Recall that the Hardy--Littlewood maximal function $M =M_{I_0}$ on and interval $I_0$ is defined by 
\[
Mf (s) = \sup_{I:s\in I} |I|^{-1} \int_I |f(x)| dx, 
\]
where the supremum is taken over all intervals $I\subset I_0$, $s\in I$.

Moreover, it is well known, see \cite[Ch.~4, eqn. (6.3)]{Bennett-Sharpley_1988} that for $\psi_0(s)=s\ln(e/s)$ and $\mu(\cX)=1$
\begin{align}
\label{LlogL-max01}
\|f\|\ci{\Lambda_{\psi_0}} = \int_0^1 Mf^*(s) ds. 
\end{align}
This implies that for any collection $\cF$  of measurable subsets of $\cX$ we have the estimate for the corresponding maximal function $M\ci\cF$, 
\[
\int_\cX M\ci\cF f \le \|f\|\ci{\Lambda_{\psi_0}};
\]
here 
\[
M\ci\cF f(x):= \sup_{I\in \cF:\, x\in I} |I|^{-1}\int_I |f|d\mu.
\]
\subsection{Comparison of Orlicz and entropy  bumps}
Recall that a Young function $\Phi$ is called \emph{doubling}  (or satisfying $\Delta_2$ condition) if there is a constant $C<\infty$ such that $\Phi(2t) \le C \Phi(t)$ for all sufficiently large $t$. We will use this definition for arbitrary increasing functions, without requiring the convexity. 

In this section we assume that the underlying measure space is a probability space, i.e.~that $\mu(\cX)=1$. 

Fix a norm $\|\fdot\|_*$  in $L\log L$, namely let $\|f\|_*:= \|f\|\ci{\Lambda_{\psi_0}}$, $\psi_0(s) = s \ln(e/s)$.  
\begin{lm}
\label{l:Orl-Entr}
Let $\Phi$ be a doubling Young function satisfying
\begin{align}
\label{conv:1/Phi}
\int^\infty \frac{dt}{\Phi(t)} <\infty
\end{align}
and such that the function $t\mapsto \Phi(t)/(t\ln t)$ is increasing for sufficiently large $t$. Then there exists an increasing function $t\mapsto t\alpha(t)>0$ on $[1,\infty)$ such that 
\begin{align*}
\int_1^\infty \frac{dt}{t\alpha(t)} &< \infty \\
\intertext{and}
\alpha\left(\frac{\|f\|_*}{\|f\|_1} \right) \|f\|_* &\le \|f\|_{L^\phi}. 
\end{align*}
\end{lm}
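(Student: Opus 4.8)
\emph{Reduction and set-up.} The plan is to reduce to a comparison of Lorentz norms via Lemma~\ref{l:Orl-Lor} and then to extract a \emph{growing} penalty $\alpha$ by exploiting that a large value of $R:=\|f\|_*/\|f\|_1$ forces the $L\log L$ mass of $f$ onto an exponentially small level set. By Lemma~\ref{l:Orl-Lor} and Proposition~\ref{p:LCM} there is a concave $\psi$ on $(0,1]$ with $\int_0^1 ds/\psi(s)<\infty$ and $\|f\|\ci{\Lambda_\psi}\le C\|f\|_{L^\Phi}$ for all $f$; hence it suffices to produce $\alpha$ with the stated structural properties together with a constant $C'$ so that $\alpha(R)\|f\|_*\le C'\|f\|\ci{\Lambda_\psi}$, and then to replace $\alpha$ by $\alpha/(CC')$. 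Using \eqref{La_psi-01} I will write $\|f\|\ci{\Lambda_\psi}=\int_0^\infty N\,\Psi(N)\,d\tau$, $\|f\|_*=\int_0^\infty N\,\Psi_0(N)\,d\tau$, $\|f\|_1=\int_0^\infty N\,d\tau$, where $N=N_f$ is the distribution function, $\Psi(s)=\psi(s)/s$ and $\Psi_0(s)=\psi_0(s)/s=\ln(e/s)$; note $R\ge1$ since $\Psi_0\ge1$ on $(0,1]$.

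\emph{The ratio $\Psi/\Psi_0$ and the definition of $\alpha$.} I next analyse $\gamma:=\Psi/\Psi_0=\psi/\psi_0$. In the parametrization of Lemma~\ref{l:Orl-Lor} one has $\Psi(s)=\Phi'(t)$ and $1/s=\Phi(t)\Phi'(t)$, so $\gamma=\tilde\gamma\circ t(s)$ with $t(s)$ decreasing and $\tilde\gamma(t)=\Phi'(t)/(1+\ln\Phi(t)+\ln\Phi'(t))$. Convexity of $\Phi$ together with the $\Delta_2$ hypothesis gives $\Phi'(t)\asymp\Phi(t)/t$ for large $t$; since also $\Phi(t)\ge t\ln t$ for large $t$ (because $\Phi(t)/(t\ln t)$ is increasing and $\int^\infty dt/\Phi(t)<\infty$), one gets $\ln\Phi(t)\asymp\ln t$, hence $1+\ln\Phi(t)+\ln\Phi'(t)\asymp\ln t$, and therefore $\tilde\gamma(t)\asymp\Phi(t)/(t\ln t)$. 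By hypothesis the last function is increasing and tends to $\infty$, so $\gamma$ lies between constant multiples of a decreasing function $\gamma_*$ on $(0,1]$ with $\gamma_*(0^+)=\infty$. I then set, for a small $c_0>0$, $\alpha(R):=c_0\gamma_*(e^{1-R/2})$ for $R\ge2$ and $\alpha(R):=\alpha(2)$ for $1\le R\le2$. Then $R\mapsto R\alpha(R)$ is increasing (a product of positive increasing functions, continuous at $R=2$) and positive; and, since $\ln(e/e^{1-R/2})=R/2$, the substitution $s=e^{1-R/2}$ gives $\int_2^\infty dR/(R\alpha(R))\asymp\int_0^1 ds/\psi(s)<\infty$, the integral over $[1,2]$ being trivially finite.

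\emph{The estimate.} For $1\le R\le2$ the inequality is immediate, since $\alpha$ is bounded there and $\Psi\gtrsim\Psi_0$ on $(0,1]$ gives $\|f\|_*\lesssim\|f\|\ci{\Lambda_\psi}$. For $R\ge2$ put $\delta:=e^{1-R/2}\in(0,1]$. On $\{N>\delta\}$ we have $\Psi_0(N)=\ln(e/N)<\ln(e/\delta)=R/2$, so
\[
\int_{\{N>\delta\}}N\Psi_0(N)\,d\tau\ <\ \frac R2\int_{\{N>\delta\}}N\,d\tau\ \le\ \frac R2\|f\|_1\ =\ \frac12\|f\|_*,
\]
whence $\int_{\{N\le\delta\}}N\Psi_0(N)\,d\tau\ge\frac12\|f\|_*$. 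Using $\Psi=\gamma\Psi_0\gtrsim\gamma_*\Psi_0$ and that $\gamma_*$ is decreasing,
\[
\|f\|\ci{\Lambda_\psi}\ \ge\ \int_{\{N\le\delta\}}N\Psi(N)\,d\tau\ \gtrsim\ \gamma_*(\delta)\int_{\{N\le\delta\}}N\Psi_0(N)\,d\tau\ \gtrsim\ \gamma_*(\delta)\|f\|_*\ \asymp\ \alpha(R)\|f\|_*,
\]
which (after fixing $c_0$) is exactly what is needed.

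\emph{Main obstacle.} I expect the crux to be the equivalence $\tilde\gamma(t)\asymp\Phi(t)/(t\ln t)$ of the second step: this is precisely where both regularity hypotheses on $\Phi$ are consumed — the $\Delta_2$ condition to tie $\Phi'$ and $\ln\Phi'$ to $\Phi$ and $t$, and the monotonicity of $\Phi(t)/(t\ln t)$ both to make $\gamma_*$ decreasing and to make it blow up at $0$. The blow-up is essential: no pointwise comparison of $\Psi$ with $\Psi_0$ can produce a growing $\alpha$ (pointwise one only has $\Psi\gtrsim\Psi_0$, giving a constant $\alpha$, which fails $\int_1^\infty dR/(R\alpha(R))<\infty$); the growth is paid for entirely by the $L^1$-smallness of $f$ relative to its $L\log L$ norm. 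A more computational route, avoiding Lemma~\ref{l:Orl-Lor}, would be to take $\alpha(R):=e^{R}/(R\,\Phi^{-1}(e^{R}))$ — for which $R\alpha(R)=\Phi(y)/y$ with $y=\Phi^{-1}(e^{R})$ is manifestly increasing and $\int_1^\infty dR/(R\alpha(R))\asymp\int^\infty dy/\Phi(y)<\infty$ by $\Delta_2$ and \eqref{conv:1/Phi} — and to prove $\alpha(R)\|f\|_*\le C\|f\|_{L^\Phi}$ by estimating the extremal problem $\sup\{\|f\|_*:\|f\|_{L^\Phi}=1,\ \|f\|_1=a\}$, whose near-extremizer is a single spike of height $a|E|^{-1}$ on a small set $E$.
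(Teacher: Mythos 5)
Your argument is correct, and it takes a genuinely different route at the decisive step. You share with the paper the reduction through Lemma \ref{l:Orl-Lor} (pass from $\|f\|_{L^\Phi}$ to $\|f\|\ci{\Lambda_\psi}$ and work with $\|f\|\ci{\Lambda_\psi}=\int N\Psi(N)$, $\|f\|_*=\int N\Psi_0(N)$), and in fact your penalty is essentially the paper's: the paper also ends up with $\alpha(R)\asymp\gamma(e^{1-R})$ where $\gamma=\Psi/\Psi_0$, versus your $c_0\gamma_*(e^{1-R/2})$. The difference is how the inequality $\alpha(R)\|f\|_*\lesssim\|f\|\ci{\Lambda_\psi}$ is proved. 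The paper first truncates $\Phi$ by $t\ln^2t$ (Lemma \ref{l:min_bump}) to control the logarithms, builds a minorant $\Psi_0\le C\Psi$, convexifies $t\mapsto\Psi_0(e^{-t})$ via the least concave majorant (Proposition \ref{p:LCM}), and then invokes Lemma \ref{l:volb}, whose proof is Jensen's inequality for the convex function $\gamma$ with respect to the probability measure $N(t)\,dt/\|f\|_1$. You replace all of this by a level-set splitting: since $\Psi_0(N)<R/2$ on $\{N>e^{1-R/2}\}$, at least half of the $L\log L$ mass sits on $\{N\le e^{1-R/2}\}$, where the gain $\Psi/\Psi_0\gtrsim\gamma_*(e^{1-R/2})\asymp\alpha(R)$ is available because $\gamma$ is comparable to a decreasing function blowing up at $0$ — and you extract that monotonicity/blow-up from doubling ($\Phi'\asymp\Phi/t$, $\ln\Phi\asymp\ln t$, the latter replacing the paper's truncation step) together with the hypothesis that $\Phi(t)/(t\ln t)$ increases (its limit must be $\infty$ by \eqref{conv:1/Phi}). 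What each buys: your proof is more elementary and self-contained — no convexification and no Lemma \ref{l:volb} — at the cost of a harmless factor $2$ in the exponent and unspecified constants; the paper's route isolates Lemma \ref{l:volb} as a statement of independent interest (entropy bumps versus general Lorentz bumps, with constant $1$ and no reference to $\Phi$), which is also quoted in the introduction. Two small housekeeping points in your write-up: the comparison $\Psi/\Psi_0\asymp\Phi(t)/(t\ln t)$ holds only for small $s$ (large $t$), so $\gamma_*$ must be extended by a constant near $s=1$ — harmless, since there $\Psi$ and $\Psi_0$ are bounded above and below — and the final rescaling $\alpha\mapsto\alpha/(CC')$ should be noted to preserve monotonicity of $t\alpha(t)$ and finiteness of $\int_1^\infty dt/(t\alpha(t))$, which it does.
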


The assumption that the function $t\mapsto \Phi(t)/(t\ln t)$ is increasing seems not too restrictive, especially in the light of Lemma \ref{l:Pphi/LlogL} below. It is also satisfied by the standard logarithmic bumps of $t$, namely for the Young functions
\[
\Phi(t) = t \ln t \ln_2 t \ldots \ln_{n-1}t (\ln_n t)^{1+\e}, \qquad \e>0;
\]
here $\ln_{k+1}(t) =\ln(\ln_k(t))$ and $\ln_1(t):=\ln t$. 

\begin{lm}
\label{l:Pphi/LlogL}
Let a Young function $\Phi$ satisfies \eqref{conv:1/Phi}. Then there exists $c>0$ such that
\[
\Phi(t) \ge c \cdot t\ln t 
\]
for all sufficiently large $t$. 
\end{lm}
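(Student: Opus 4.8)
The plan is to extract the lower bound directly from the convergence of $\int^\infty dt/\Phi(t)$, using only the two basic features of a Young function: convexity and vanishing at $0$. The first step is to record that, since $\Phi$ is convex with $\Phi(0)=0$, the quotient $g(t):=\Phi(t)/t$ is nondecreasing on $(0,\infty)$; hence \eqref{conv:1/Phi} can be rewritten as $\int^\infty \frac{dt}{t\,g(t)}<\infty$, and in particular the tail $\varepsilon(t):=\int_t^\infty \frac{ds}{s\,g(s)}$ is finite for large $t$ and tends to $0$ as $t\to\infty$.

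The second step is a one-line estimate exploiting the monotonicity of $g$. For $t$ large and $s\in[t,t^2]$ we have $g(s)\le g(t^2)$, so
\[
\varepsilon(t)\ \ge\ \int_t^{t^2}\frac{ds}{s\,g(s)}\ \ge\ \frac{1}{g(t^2)}\int_t^{t^2}\frac{ds}{s}\ =\ \frac{\ln t}{g(t^2)},
\]
i.e. $g(t^2)\ge \ln t/\varepsilon(t)$. Since $\varepsilon(t)\to 0$, there is $t_0$ with $\varepsilon(t)\le 1$ for all $t\ge t_0$, whence $g(t^2)\ge \ln t$ for $t\ge t_0$. Writing $u=t^2$ (so $\ln t=\tfrac12\ln u$) this says $\Phi(u)=u\,g(u)\ge \tfrac12\,u\ln u$ for all $u\ge t_0^2$, which is the claim (one may take $c=1/2$).

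One can equally argue by contradiction: were no such $c$ to exist there would be $t_n\to\infty$ with $g(t_n)/\ln t_n\to 0$, and then, using $g(s)\le g(t_n)$ on $[\sqrt{t_n},t_n]$,
\[
\int_{\sqrt{t_n}}^{t_n}\frac{ds}{s\,g(s)}\ \ge\ \frac{1}{g(t_n)}\cdot\frac{\ln t_n}{2}\ \longrightarrow\ \infty,
\]
contradicting the fact that these integrals are dominated by the vanishing tails $\int_{\sqrt{t_n}}^{\infty}\frac{ds}{s\,g(s)}$. Either way, the only input is the monotonicity of $\Phi(t)/t$, so there is no real obstacle; the single point worth flagging is that the argument uses nothing about $\Phi$ beyond convexity and $\Phi(0)=0$ (no continuity of the bump, no $\Delta_2$), which matters because Lemma \ref{l:Pphi/LlogL} is invoked in exactly that generality inside the proof of Lemma \ref{l:Orl-Entr}.
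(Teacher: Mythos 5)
Your proof is correct. It differs from the paper's in a small but real way: the paper exploits convexity through the derivative, writing $\Phi(t)\le t\Phi'(t)$ so that \eqref{conv:1/Phi} bounds $\int_1^t \frac{ds}{s\Phi'(s)}$, then uses that $1/\Phi'$ is decreasing to get $\Phi'(t)\ge c_1\ln t$, and finally (implicitly) integrates $\Phi'$ back up to recover $\Phi(t)\ge c\,t\ln t$. You apply the same monotonicity-versus-$\int ds/s$ trick directly to $g(t)=\Phi(t)/t$ over the window $[t,t^2]$, using only that $g$ is nondecreasing (convexity plus $\Phi(0)=0$) and that the tail $\varepsilon(t)$ of the integral vanishes; this skips the derivative entirely, avoids the unstated final integration step, and produces the explicit constant $c=1/2$. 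What each buys: the paper's route also yields the pointwise bound $\Phi'(t)\gtrsim\ln t$, which is in the spirit of the estimates \eqref{Phi'_0-bounds} used later, while yours is marginally more self-contained (no appeal to $\Phi'$, hence no differentiability or monotone-derivative bookkeeping) and makes the constant transparent. One cosmetic point: when you divide by $g(t^2)$ you should note that convergence of $\int^\infty \frac{ds}{s\,g(s)}$ forces $g(s)>0$ for all large $s$ (as $g$ is nondecreasing), so the quotient is legitimate; this is the same implicit positivity the paper uses for $\Phi'$.
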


\begin{proof} By convexity we have $\Phi(t)\le t \Phi'(t)$. Therefore \eqref{conv:1/Phi} implies
$$
\int_1^t \frac{dt}{t\Phi'(t)} \le C_1<\infty
$$
independently of $t$. As $1/\Phi'(t)$ decreases, we write that the left hand side is at least $\frac{\ln t}{\Phi'(t)}$. Then $\Phi'(t) \ge c_1 \ln t$, where $c_1= C_1^{-1}$. Lemma \ref{l:Pphi/LlogL} follows.
\end{proof}

\bigskip

If $\Phi$ is a Young function, it is an easy corollary of convexity that 
\begin{align}
\label{Phi<tPhi'}
\Phi(t) \le t\Phi'(t) 
\end{align}


If $\Phi$ is doubling, it is an easy corollary of the representation 
\begin{align}
\label{FTC-Phi}
\Phi(t) = \int_0^t \Phi'(x) dx
\end{align}
that there exists $c>0$ such that
\begin{align}
\label{Phi>ctPhi'}
\Phi(t) \ge c t \Phi'(t)
\end{align}
for all sufficiently large $t$. If a Young function $\Phi$ is doubling, we can conclude from \eqref{Phi<tPhi'} and \eqref{Phi>ctPhi'} that $\Phi'$ is also doubling. On the other hand, if $\Phi'$ is doubling, then representation \eqref{FTC-Phi} implies that $\Phi$ is doubling as well. 


\begin{lm}
\label{l:min_bump}
Let $\Phi_{1,2}$ be doubling Young functions such that
\[
\int^\infty \frac{dt}{\Phi_{1,2}(t)} <\infty. 
\]
Then there exists a doubling Young function $\Phi$, $\Phi(t) \le \min\{\Phi_1(t), \Phi_2(t)\}$ such that 
\begin{align}
\label{int1/Phi-01}
\int^\infty \frac{dt}{\Phi(t)} <\infty .
\end{align}
Moreover, there exists $c>0$ such that 
\begin{align}
\label{Phi>min}
\Phi(t) \ge c \min\{\Phi_1(t), \Phi_2(t)\}
\end{align}
for all sufficiently large $t$. 
\end{lm}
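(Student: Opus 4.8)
\textbf{Proof proposal for Lemma \ref{l:min_bump}.}

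The plan is to build $\Phi$ directly from the pointwise minimum and then repair the failure of convexity. First I would set $M(t) := \min\{\Phi_1(t),\Phi_2(t)\}$. This function is increasing with $M(0)=0$ and satisfies $\int^\infty dt/M(t) \le \int^\infty dt/\Phi_1(t) < \infty$ (since $M\le \Phi_1$ forces $1/M \ge 1/\Phi_1$ — wait, that is the wrong direction), so instead I observe $M \le \Phi_1$ gives nothing, but $M$ equals one of the two $\Phi_i$ at each point, so on the set where $M=\Phi_i$ we have $1/M = 1/\Phi_i$, and therefore $\int^\infty dt/M(t) = \int_{\{M=\Phi_1\}} dt/\Phi_1(t) + \int_{\{M=\Phi_2\}\setminus\{M=\Phi_1\}} dt/\Phi_2(t) \le \int^\infty dt/\Phi_1(t) + \int^\infty dt/\Phi_2(t) < \infty$. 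So $M$ already satisfies the integrability condition; the only defect is that $M$ need not be convex. Also $M$ is doubling: $M(2t) = \Phi_i(2t) \le C_i \Phi_i(t)$ for whichever $i$ achieves the minimum at $2t$, and $\Phi_i(t) \ge M(t)$, so $M(2t) \le \max\{C_1,C_2\} M(t)$ for large $t$.

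Next I would convexify: let $\Phi$ be the greatest convex minorant of $M$ on $[0,\infty)$ (equivalently, $\Phi(t) = \sup\{L(t) : L \text{ affine}, L\le M\}$). Then $\Phi$ is convex, increasing (after possibly discarding an initial flat piece — near $0$ one can just take $\Phi$ linear, which is harmless for a Young function), $\Phi(0)=0$, and $\Phi \le M = \min\{\Phi_1,\Phi_2\}$, giving the first claimed inequality. Since $\Phi \le M$, we get $1/\Phi \ge 1/M$, which is the wrong direction for \eqref{int1/Phi-01}; so the real content is the reverse bound \eqref{Phi>min}, from which \eqref{int1/Phi-01} follows: $\int^\infty dt/\Phi(t) \le c^{-1}\int^\infty dt/M(t) < \infty$. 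Doubling of $\Phi$ will follow from \eqref{Phi>min} together with doubling of $M$: $\Phi(2t) \le M(2t) \le C\, M(t) \le (C/c)\,\Phi(t)$ for large $t$.

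So the whole lemma reduces to proving \eqref{Phi>min}: the greatest convex minorant of $M$ is, up to a constant, bounded below by $M$ itself. This is the main obstacle, and it is \emph{not} true for arbitrary increasing $M$ — it uses that each $\Phi_i$ is convex together with the doubling hypothesis. The idea: fix a large $t_0$. The convex minorant $\Phi$ at $t_0$ is realized by a supporting line $L$ with $L(t_0)=\Phi(t_0)$ and $L\le M$ everywhere. Pick the index $i$ with $M(t_0)=\Phi_i(t_0)$. Since $\Phi_i$ is convex, its chord from the origin lies above its graph on $[0,t_0]$ and below on $[t_0,\infty)$; using $\Phi_i(0)=0$ and convexity one shows the supporting line $L$ at $t_0$, being $\le \Phi_i$, cannot have too small a slope, because a line of small slope through a point below $\Phi_i(t_0)$ would be forced to cross above $\Phi_i$ somewhere (this is where I would use that $\Phi_i(t) \le t\Phi_i'(t)$, i.e. \eqref{Phi<tPhi'}, and the doubling bound \eqref{Phi>ctPhi'} to control $\Phi_i'$ near $t_0$). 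Quantitatively: if $L(t) = a(t - t_0) + \Phi(t_0)$ with $\Phi(t_0) < c\, M(t_0)$, then evaluating $L\le M \le \Phi_i$ at $t=2t_0$ and using $\Phi_i(2t_0)\le C M(2t_0)$, while at $t=t_0/2$ using $L(t_0/2)\ge 0$, pins $a$ into a range forcing $\Phi(t_0) = L(t_0) \ge c'\Phi_i(t_0) = c' M(t_0)$, a contradiction for suitable $c$. Making this comparison-of-secant-lines argument precise, with the doubling constants tracked, is the crux; everything else is bookkeeping.
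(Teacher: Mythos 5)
Your plan — set $M:=\min\{\Phi_1,\Phi_2\}$, take its greatest convex minorant $\Phi$, and prove the comparability $\Phi\ge cM$ — is a genuinely different route from the paper's. The paper sidesteps the convexification issue entirely by minimizing the \emph{derivatives}: it puts $\phi:=\min\{\Phi_1',\Phi_2'\}$ and $\Phi(t):=\int_0^t\phi(x)\,dx$, which is automatically a doubling Young function with $\Phi\le\min\{\Phi_1,\Phi_2\}$, and then \eqref{Phi>min} is a one-line consequence of $\Phi(t)\ge ct\phi(t)$ (doubling, i.e.\ \eqref{Phi>ctPhi'}) and $t\Phi_i'(t)\ge\Phi_i(t)$ (convexity, \eqref{Phi<tPhi'}); \eqref{int1/Phi-01} then follows exactly as in your last reduction. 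Your preliminary reductions (integrability of $1/M$, and the fact that \eqref{Phi>min} implies both \eqref{int1/Phi-01} and doubling of $\Phi$) are fine, although your one-line proof that $M$ is doubling uses $\Phi_i(t)\ge M(t)$ in the wrong direction — the correct trivial argument is $M(2t)\le\Phi_j(2t)\le C_j\Phi_j(t)$ for \emph{every} $j$, hence $M(2t)\le(\max_jC_j)\,M(t)$.

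The genuine gap is at the step you yourself flag as the crux: your supporting-line mechanism does not work as described. A supporting line $L$ of the convex minorant at $t_0$ satisfies $L\le M$ everywhere, but there is no reason that $L(t_0/2)\ge0$ (only the minorant itself is nonnegative; supporting lines of a superlinear Young-type function typically have negative values well to the left of the contact point), and the evaluation at $2t_0$ only yields an \emph{upper} bound on the slope, which by itself cannot force $L(t_0)\gtrsim M(t_0)$. The estimate is nevertheless true, and a correct completion is simpler than your sketch: write the minorant at $t_0$ as the infimum of chord values $\lambda M(a)+(1-\lambda)M(b)$ with $a\le t_0\le b$, $t_0=\lambda a+(1-\lambda)b$. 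If $a\le t_0/2$, then $1-\lambda=(t_0-a)/(b-a)\ge t_0/(2b)$, and since $M(t)/t=\min_i\Phi_i(t)/t$ is increasing, the chord value is $\ge(1-\lambda)M(b)\ge\frac{t_0}{2}\cdot\frac{M(b)}{b}\ge\frac12 M(t_0)$. If $a>t_0/2$, then doubling of the $\Phi_i$ (for $t_0$ large) gives $M(a)\ge M(t_0/2)\ge M(t_0)/C$, while $M(b)\ge M(t_0)$, so the chord value is $\ge M(t_0)/C$. Hence the minorant is $\ge cM(t_0)$ for all sufficiently large $t_0$, which is \eqref{Phi>min}; with that ingredient supplied, your argument closes. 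So: a viable alternative proof, but only after replacing the sketched supporting-line argument by the chord estimate (or an equivalent), whereas the paper's derivative-minimum construction gets the same conclusion with essentially no work.
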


\begin{proof}
Let $\phi_{1,2}:=\Phi_{1,2}'$, so 
\[
\Phi_{1,2}(t) = \int_0^t \phi_{1,2}(x) dx. 
\]
Define 
\[
\phi(t):= \min\{\phi_1(t), \phi_2(t)\}, \qquad \Phi(t) :=\int_0^t \phi(x) dx . 
\]
Clearly $\phi$ is increasing and doubling, so $\Phi$ is a doubling Young function. Since $\phi(t)\le \phi_{1,2}(t)$ we can conclude that $\Phi\le\min\{\Phi_1, \Phi_2\}$. 

Finally, since all Young functions are doubling, we get using \eqref{Phi<tPhi'} and \eqref{Phi>ctPhi'} that
\begin{align*}
\Phi(t) \ge c t\phi(t) = ct\min\{\phi_1(t), \phi_2(t)\} \ge c\min \{\Phi_1(t), \Phi_2(t)\} .
\end{align*}
Therefore 
\[
\frac{1}{\Phi(t) } \le \frac1c \max \left\{\frac{1}{\Phi_1(t)},  \frac{1}{\Phi_2(t)} \right\} 
\le \frac1c \left( \frac{1}{\Phi_1(t)} + \frac{1}{\Phi_2(t)} \right);
\]
integrating this inequality gives us \eqref{int1/Phi-01}. 
\end{proof}

\begin{lm}
\label{l:volb}
Let $\Psi:(0,1]\to \R_+$, $\Psi(1)>0$ be a decreasing function such that $\psi(s):=s\Psi(s)$ is increasing, $\psi(0_+)=0$, and such that
\[
\int_0 \frac{ds}{s\Psi(s)} <\infty. 
\]
 Assume that $t\mapsto \Psi(e^{-t})$ is convex near $\infty$. Then there exists an increasing function $t\mapsto t\alpha(t)>0$ on $[1, \infty)$ satisfying 
\begin{align}
\notag
\int_1^\infty \frac{dt}{t\alpha(t)} &< \infty \\
\intertext{and such that }
\label{bump-comp}
\alpha\left(\frac{\|f\|_*}{\|f\|_1} \right) \|f\|_* &\le \|f\|_{\Lambda_\psi}. 
\end{align}
\end{lm}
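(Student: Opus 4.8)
The plan is to reduce this to a construction of a suitable penalty function $\alpha$ out of the data $\Psi$, by making the substitution $t = \ln(e/s)$ (so $s = e^{1-t}$, with $s\in(0,1]$ corresponding to $t\in[1,\infty)$) that turns the Lorentz norm $\|f\|_{\Lambda_\psi}$ into an integral against the weight $N(\cdot)\Psi(N(\cdot))$ along the level sets of $f$, exactly as in the rewriting \eqref{La_psi-01}. Concretely, I would first record that, since $\mu(\cX)=1$, both $\|f\|_*$ and $\|f\|_{\Lambda_\psi}$ are of the form $\int_0^\infty N_f(t)\,\Xi(N_f(t))\,dt$ with $\Xi(s)=\ln(e/s)$ in the first case and $\Xi(s)=\Psi(s)$ in the second; what must be shown is that whenever the ratio $\|f\|_*/\|f\|_1$ is large, the extra weight $\Psi$ puts near the small values of $N_f$ forces $\|f\|_{\Lambda_\psi}$ to be at least $\alpha(\|f\|_*/\|f\|_1)$ times larger than $\|f\|_*$. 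Since both $\|f\|_*$ and $\|f\|_1$ can be read off from the same distribution-function data, I expect a worst-case (extremal) analysis: for a fixed value of $\|f\|_1$ and a fixed value of $\|f\|_*$, identify the $f$ minimizing $\|f\|_{\Lambda_\psi}$, which by the concavity/convexity hypotheses will be (an approximation of) a two-value or ``layer-cake with one jump'' function, and define $\alpha$ by that minimal ratio.

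The key steps, in order, would be: (1) set $\Lambda(t):=\Psi(e^{-t})$, which by hypothesis is convex near infinity and decreasing, and note $\int_0 ds/(s\Psi(s))<\infty$ becomes $\int^\infty dt/\Lambda(t)<\infty$ after a change of variables; (2) relate $\Lambda$ to the function $t\mapsto t\alpha(t)$ we want to build, aiming for $t\alpha(t)\asymp \Lambda(\ln t)$ or some comparable monotone minorant — here the convexity of $t\mapsto\Psi(e^{-t})$ is what lets us pass to an increasing $t\alpha(t)$ while keeping $\int_1^\infty dt/(t\alpha(t))<\infty$; (3) for a given $f$, normalize so $\|f\|_1=1$, write $R:=\|f\|_*$, and use the layer-cake representation to bound $\|f\|_{\Lambda_\psi}$ from below in terms of how the mass of $\{N_f>t\}$ is distributed — the point being that $\|f\|_*=\int_0^\infty\!N_f\ln(e/N_f)$ being large with $\|f\|_1=\int_0^\infty\!N_f$ fixed forces $N_f$ to be ``spread thin'' over a set of small measure, precisely the regime where $\Psi(s)\gg\ln(e/s)$; (4) quantify this: on the part of the integral where $N_f(t)\le\delta$ for a threshold $\delta$ chosen in terms of $R$, replace $\ln(e/N_f)$ by $\Psi(N_f)$ at the cost of the gain factor $\inf_{s\le\delta}\Psi(s)/\ln(e/s)$, and check that the complementary part contributes a controlled fraction of $R$; (5) choose $\delta=\delta(R)$ optimally and set $\alpha(R)$ to be (a constant multiple of) the resulting gain, then verify that $R\mapsto R\alpha(R)$ is increasing and $\int_1^\infty dR/(R\alpha(R))<\infty$ by translating back through the change of variables in step (1), using $\int_0 ds/(s\Psi(s))<\infty$.

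The main obstacle I anticipate is step (4)–(5): controlling the ``bad'' part of the layer-cake integral — the levels $t$ where $N_f(t)$ is not small — and showing it cannot by itself account for all of $\|f\|_*$, so that a definite fraction of $\|f\|_*$ genuinely lives at small levels and can be upgraded via $\Psi$. This requires a pigeonhole-type splitting of the $t$-axis into the region where $N_f$ is comparable to the ``typical'' value and the region where it is small, together with the observation (forced by $\|f\|_1$ fixed) that the small-level region carries a comparable share of $\|f\|_*$ whenever $R$ is large; balancing the threshold against the concavity of $\psi$ (equivalently the convexity of $t\mapsto\Psi(e^{-t})$) to get a clean monotone $\alpha$ is the delicate bookkeeping. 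The verification that $\int_1^\infty dt/(t\alpha(t))<\infty$ should then be essentially a restatement of the hypothesis $\int_0 ds/(s\Psi(s))<\infty$ under the substitution, so I do not expect difficulty there once $\alpha$ is correctly defined.
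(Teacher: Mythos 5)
Your plan is sound and, once the threshold in your steps (4)--(5) is chosen correctly, it does prove the lemma; but it is a genuinely different and longer route than the paper's. The paper normalizes $\|f\|_1=\int_0^\infty N(t)\,dt=1$, takes the probability measure $d\mu=N(t)\,dt$, and makes the extremal choice of penalty outright: $\gamma(t):=t\alpha(t):=\Psi(ee^{-t})$, so that $\Psi=\gamma\circ\Psi_0$ with $\Psi_0(s)=\ln(e/s)$. By \eqref{La_psi-01} the claim \eqref{bump-comp} then reads $\gamma\bigl(\int_0^\infty\Psi_0(N)\,d\mu\bigr)\le\int_0^\infty\gamma(\Psi_0(N))\,d\mu$, which is exactly Jensen's inequality for the convex $\gamma$ (the convexity of $t\mapsto\Psi(e^{-t})$ is used here, after first flattening $\Psi$ near $s=1$ so that convexity holds on all of $[1,\infty)$), and $\int_1^\infty dt/(t\alpha(t))=\int_0^1 ds/(s\Psi(s))<\infty$ is literally the hypothesis under $s=e^{1-t}$. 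Your splitting argument replaces Jensen: with $\|f\|_1=1$, $R=\|f\|_*$, and $\delta$ defined by $\ln(e/\delta)=R/2$, the levels where $N_f>\delta$ contribute at most $\ln(e/\delta)\,\|f\|_1=R/2$ to $\|f\|_*$, so at least $R/2$ of $\|f\|_*$ lives where $N_f\le\delta$ and is upgraded by the factor $\inf_{s\le\delta}\Psi(s)/\ln(e/s)$; the convexity hypothesis together with $\int_0 ds/(s\Psi(s))<\infty$ makes $\Psi(s)/\ln(e/s)$ essentially monotone for small $s$, so this infimum is comparable to its value at $s=\delta$, giving $R\alpha(R)\asymp\Psi(e^{-R/2})$ --- the same scale as the paper's sharp choice, with worse constants. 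Jensen buys brevity and the sharp $\alpha$; your route works but requires exactly the bookkeeping you anticipate (plus handling bounded $R$, where $\alpha$ can be taken constant because the hypotheses force $\Psi(s)\ge c\ln(e/s)$).

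One concrete correction: the calibration in your step (2) is wrong. Testing \eqref{bump-comp} on indicators $f=\1_E$, $\mu(E)=s$, gives $\|f\|_1=s$, $\|f\|_*=s\ln(e/s)$, $\|f\|_{\Lambda_\psi}=s\Psi(s)$, so any admissible $\gamma(t)=t\alpha(t)$ must satisfy $\gamma(\ln(e/s))\le\Psi(s)$, i.e.\ $t\alpha(t)\le\Psi(ee^{-t})=\Lambda(t-1)$; the right scale is $\Lambda(ct)$, exponentially larger than your proposed $\Lambda(\ln t)=\Psi(1/t)$. Moreover $t\alpha(t)=\Psi(1/t)$ can never satisfy the required integrability: since $s\Psi(s)=\psi(s)\le\psi(1)$ on $(0,1]$, one gets $\int_1^\infty dt/\Psi(1/t)=\int_0^1 ds/(s^2\Psi(s))\ge\psi(1)^{-1}\int_0^1 ds/s=\infty$. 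Fortunately your own steps (4)--(5), executed with the threshold above, produce the correct scale automatically, so this is a miscalibrated expectation rather than a fatal flaw --- but as literally stated, step (2) would not yield the conclusion.
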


\begin{proof}

First, we can assume without loss of generality that $t\mapsto \Psi(e^{-t})$ is convex for all $t\in [1, \infty)$. Indeed, let this function be convex for $t> a$. Defining 
\[
\Psi_1(s) : = \left\{ \begin{array}{ll} \Psi(s) , \qquad &s< e^{-a}:= b ;\\
\Psi(b_-) & s\ge e^{-a} \end{array}\right. , 
\]
we can immediately see that the function $t\mapsto \Psi_1(e^{-t})$ is convex for all $t\ge 1$. And replacing $\Psi$ by  $\Psi_1$ we get an equivalent norm on $\Lambda_\psi$.  

So, let us assume that $t\mapsto \Psi(e^{-t})$ is convex for all $t\ge 1$. 
Define 
\[
\alpha(t):= \frac{\Psi(e e^{-t})}{t}, \qquad \gamma(t):= t \alpha(t) = \Psi(e e^{-t}),   
\]
so for $\Psi_0(s) := \ln(e/s)$ we have 
\begin{align}
\label{gamma(Psi_0)}
\Psi(s) = \gamma(\Psi_0(s)). 
\end{align}

Change of variables $s=e^{1-t}$ gives us that 
\[
\int_1^\infty\frac{dt}{t\alpha(t)} = \int_1^\infty \frac{dt}{\Psi(e^{1-t})} = \int_0^1 \frac{ds}{s\Psi(s)} <\infty. 
\]

To prove \eqref{bump-comp} let us first notice that because of homogeneity we can assume without loss of generality that 
\[
\|f\|_1 = \int_0^\infty N(t) dt =1. 
\]
Then defining probability measure $\mu$ by $d\mu = N(t) dt$ we using \eqref{gamma(Psi_0)} can rewrite \eqref{bump-comp}  as 
\begin{align}
\label{bump-comp-01}
\gamma\left( \int_0^\infty \Psi_0(N(t)) d\mu(t) \right) \le \int_0^\infty \gamma\left(\Psi_0(N(t))\right) d\mu(t)  
\end{align}
(recall that $\Psi_0(s) = \ln(e/s)$). 

But the function $\gamma$ is convex, so \eqref{bump-comp-01} follows immediately from Jensen inequality. 
\end{proof}

\begin%
{proof}[Proof of Lemma \ref{l:Orl-Entr}]
Let $\Phi(t)/(t\ln t)$ be increasing for all $t\ge t_0\ge e^e$. 

Applying Lemma \ref{l:min_bump} to functions $\Phi_1=\Phi$ and $\Phi_2$, where $\Phi_2(t) = t \ln^2 t$ for  $t\ge t_0$, we get a function $\Phi_0$ such that $\int^\infty 1/\Phi_0 <\infty$, and such that $\Phi_0(t) \le t\ln^2 t$ (for  $t\ge t_0$). 

The function $\Phi_0(t)/(t\ln t)$ is not necessarily increasing, but $\Phi_0$ is equivalent to the function $\Phi\ti{min} :=\min\{\Phi_1, \Phi_2\}$, and $\Phi\ti{min}(t)/(t\ln t)$ is increasing (for  $t\ge t_0$) as minimum of increasing functions.  Since $\Phi\ti{min}(t)/(t\ln t)$ is increasing and $\Phi_0$ is equivalent to $\Phi\ti{min}$ near $\infty$,  we conclude that for all  $t\ge t_0$
\begin{align}
\label{Phi_0-bounds}
ct\ln t \le \Phi_0(t) \le C t \ln^2 t  .
\end{align}
Since $\Phi_0$ is doubling, \eqref{Phi<tPhi'} and \eqref{Phi>ctPhi'} imply that 
\begin{align}
\label{Phi'_0-bounds}
c \ln t \le \Phi_0'(t) \le C  \ln^2 t  \qquad \forall t\ge t_0 .  
\end{align}
Therefore, replacing $\Phi$ by $\Phi_0$ we can assume without loss of generality that 
\begin{align}
\label{Phi-Phi'-bounds}
ct\ln t \le \Phi(t) \le C t \ln^2 t  , \qquad c \ln t \le \Phi'(t) \le C  \ln^2 t   \qquad \forall t\ge t_0 .
\end{align}

Now let us recall Lemma \ref{l:Orl-Lor}. We construct a function $\Psi_0$ dominated by the function $\Psi$ from the lemma, but still such that $\int_0 \frac{ds}{s\Psi_0(s)}<\infty$. Recall that $\Psi$ in Lemma \ref{l:Orl-Lor} was given by 
\[
\Psi(\tilde s) := \Phi'(t) \qquad \text{for}\quad \tilde s=\frac{1}{\Phi(t)\Phi'(t)}. 
\]
We can see from \eqref{Phi-Phi'-bounds}
that 
\[
\frac{c}{t \ln^4 t} \le \frac{1}{\Phi(t)\Phi'(t)} \le \frac{C}{t \ln^2 t}. 
\]
So defining 
\[
\Psi_0(s) := \frac{\Phi\ti{min}(t)}{t} \qquad \text{for}\quad  s=\frac{c}{t \ln^4 t}.
\]
we get that $\Psi_0\le C\Psi$. Indeed, the functions $\Psi$ and $\Psi_0$ are decreasing, $\Phi\ti{min}(t)/t \le C\Phi(t)/t\le C\Phi'(t)$, and  we have two relationships: $s\le\tilde s$ and $\Psi_0(s) \le C\Psi(\tilde s)$. 

We can pick $c$ sufficiently small, so $s\in (0,1]$ correspond to $t\in [t_1, \infty)$, $t_1\ge t_0 \ge e^e$. 

It is also clear that $\psi_0(s):=s\Psi_0(s)$ is increasing, $\psi_0(0_+)=0$. Let us check that $1/(s\Psi_0(s))$ is integrable near $0$: since $-ds=(t^{-2} \ln^{-2} t - t^{-2}\ln^{-5} t) dt\le Ct^{-2} \ln^{-2} t dt$
\[
\int_0 \frac{ds}{s\Psi_0(s)} \le C\int^\infty \frac{t}{\Phi(t)} t \ln^4 t \frac{dt}{t^2\ln^4 t} =C\int^\infty \frac{dt}{\Phi(t)}<\infty. 
\]

Finally, we claim that $-\Psi_0(s)/\ln s$ is decreasing. 
For this we need to show that
\[
-\frac{\ln s}{\Psi_0(s)} = \frac{t \cdot(\ln t + 4 \ln\ln t - \ln c)}{\Phi\ti{min}(t)} 
\]
is a decreasing  function of $t$ (and so the increasing function of $s$). But the term $t\ln t/\Phi\ti{min}(t)$ is decreasing by the assumption, the second term is decreasing because $(\ln\ln t)/\ln t$ is decreasing for $t\ge e^e$, and the last term is decreasing if $c\le 1$, which we always can assume without loss of generality. 


Define $\f(t) := \Psi_0(e^{-t})$, $t>0$, $\f(0):=0$. Clearly, $\f$ is increasing function on $[0,\infty)$.  Change of variable in the integral shows that 
\begin{align}
\label{1/sPsi=1/fi}
\int_0 \frac{ds}{s\Psi_0(s)}<\infty \qquad \iff \qquad \int^\infty \frac{dt}{\f(t)}<\infty. 
\end{align}
The fact that $-\Psi_0(s)/\ln s$ is decreasing translates to the statement that $\f(t)/t$ is increasing. Then for the inverse $\f^{-1}$ we get that $\f^{-1}(\tau)/\tau$ is decreasing. Also, $\f^{-1}(\tau)$ is clearly increasing, so $\f^{-1}$ is a pseudoconcave function. Therefore, by Proposition \ref{p:LCM} the least concave majorant $\wt \f^{-1}$ of $\f^{-1}$ satisfies
\[
\frac12 \wt\f^{-1}(\tau) \le \f^{-1}(\tau) \le \wt\f^{-1}(\tau). 
\]
Therefore
\begin{align}
\label{fi-wtfi}
\wt\f(t) \le \f(t) \le \wt\f(2t). 
\end{align}
The function $\wt\f$ is convex. Note that 
\[
\int^\infty \frac{dt}{\wt\f(t)} = 2 \int^\infty \frac{dt}{\wt\f(2t)} \le 2\int^\infty \frac{dt}{\f(t)} <\infty.
\] 

Therefore the function $\wt\Psi$, $\wt\Psi(s) := \wt\f(-\ln s)$ satisfies the assumptions of Lemma \ref{l:volb}. Since $\wt\Psi\le \Psi_0$, applying Lemma \ref{l:volb} to $\wt\Psi$ give us the conclusion of Lemma \ref{l:Orl-Entr}.
\end{proof}

\section{An embedding theorem and two sided bumps for the Lerner type operators}
\label{s:lerner-2side}

In this section it is more convenient to define $\bu\ci I^*$ as
\[
\bu\ci I^* := \|M(u\1\ci I)\|\ci{L^1(I)} = \La M(u\1\ci I) \Ra\ci I \le\|u\|\ci{\Lambda_{\psi_0}(I)} =  \|u\1\ci I\|\ci{\Lambda_{\psi_0}(I)}\approx \|u\|\ci{L\log L(I)}, 
\]
where $M=M\ci\cD$ is the martingale maximal function, 
\[
M  f (x) = \sup_{I\in\cD:\, x\in I} |I|^{-1} \int_i |f| dx, 
\]
and $\psi_0(s):= s\ln(e/s)$, and $\Lambda_{\psi}(I) = \Lambda_\psi (I, \frac{dx}{|I|})$ is the Lorentz space.

Recall that a sequence $a=\{a\ci I\}\ci{I\in\cD}$ is called \emph{Carleson} if 
\[
\sup_{I_0\in\cD} |I_0|^{-1}\sum_{I\in\cD} |a\ci I| \cdot |I| =: \|a\|\ci{\text{Carl}}<\infty
\]
\begin{thm}
\label{t:lern-bump-02}
Let $a=\{a\ci I\}\ci{I\in \cD:}$ be a 
Carleson sequence and let $T$ be the operator 
\[
T f = \sum_{I\in\cD} \La f \Ra\ci I a\ci I \1\ci I. 
\]
Let $\alpha:[1,\infty )\to \R_+$ be an increasing function satisfying $\int_1^\infty \frac{dt}{t\alpha(t)} <\infty$, and let 
\begin{align}
\label{2-sided-H1-bump}
\alpha\left(\bu^*\ci I/\bu\ci I\right) \bu\ci I^* \bv\ci I^* \alpha\left(\bv^*\ci I/\bv\ci I\right) \le A <\infty
\end{align}
Then the operator $f\mapsto T(fu)$ is a bounded operator $L^2(u)\to L^2(v)$, 
\[
\| T (fu)\|\ci{L^2(v)} \le C \|a\|\ti{Carl} A^{1/2}\|f\|\ci{L^2(u)}. 
\]
where $C=4C_\alpha$, 
\begin{align}
\label{C_alpha-01}
C_\alpha =  \frac{1}{\alpha(1)} + \int_1^\infty \frac{dt}{t\alpha(t)}. 
\end{align}
\end{thm}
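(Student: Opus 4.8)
The plan is to prove Theorem~\ref{t:lern-bump-02} via a duality/stopping-time argument, reducing the bilinear form
\[
\Lambda(f,g) := \sum_{I\in\cD} \ave{fu}\ci I \, a\ci I\, \ave{g}\ci I \, |I|, \qquad f\in L^2(u),\ g\in L^2(v),
\]
to the corresponding sum indexed only by a sparse/stopping subfamily, so that on each stopping interval the averages $\ave{fu}\ci I/\ave{u}\ci I$ and $\ave{gv}\ci I/\ave{v}\ci I$ are controlled by the localized maximal functions $M^u f$ and $M^v g$ (weighted maximal operators), which are bounded on $L^2(u)$ and $L^2(v)$ respectively. First I would perform the standard Calderón--Zygmund stopping time separately on $f$ with respect to $u\,dx$ and on $g$ with respect to $v\,dx$: choose stopping intervals where $\ave{fu}\ci I/\ave{u}\ci I$ roughly doubles (and similarly for $g$), giving sparse families whose associated "packets" are pointwise dominated by $M^u f$, $M^v g$. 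After this reduction, $\Lambda(f,g)$ is bounded by
\[
\sum_{J\in\cS} \Big(\inf_{J} M^u f\Big)\ave{u}\ci J \Big(\inf_{J} M^v g\Big)\ave{v}\ci J \Big(\sum_{I: \pi(I)=J} |a\ci I|\,|I|\Big),
\]
where $\pi(I)$ is the stopping parent of $I$; the inner sum is $\le \|a\|\ti{Carl}\,|J|$ by the Carleson condition.

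The heart of the argument — and the reason the entropy function $\alpha$ enters — is the following: the two-sided entropy bump \eqref{2-sided-H1-bump} does \emph{not} directly give $\ave{u}\ci J\ave{v}\ci J \le A$; it only gives the weaker inequality with the bumped quantities $\bu^*\ci J\bv^*\ci J$ times the product of $\alpha$-factors. So I would \emph{not} estimate $\ave{u}\ci J$ and $\ave{v}\ci J$ crudely, but instead keep the sparse sum and perform a second (nested) stopping-time or a Carleson-embedding argument that pays the $\alpha$-penalty. Concretely, within the family of stopping intervals I would further split according to the dyadic size of the ratios $\bu^*\ci J/\bu\ci J$ and $\bv^*\ci J/\bv\ci J$: on the sublevel set where this ratio is $\asymp 2^k$, the bump condition reads $\ave{u}\ci J\ave{v}\ci J \lesssim A\,\alpha(2^k)^{-2}\,(\ave{u}\ci J/\bu^*\ci J)(\ave{v}\ci J/\bv^*\ci J)$, and the $L\log L$ character of $\bu^*$ (via $M$) lets me absorb $\bu^*\ci J$ into a Carleson sum that is itself controlled by $\|M^u f\|_{L^2(u)}^2$ after summing the geometric series $\sum_k 1/\alpha(2^k)^2$; but that series need not converge, so the correct bookkeeping is the one already baked into $C_\alpha$: one uses \emph{one} power of $\alpha$ to tame the $u$-side and \emph{one} to tame the $v$-side, with the convergence of $\sum_k 1/(2^k\alpha(2^k)) \asymp \int_1^\infty dt/(t\alpha(t)) < \infty$ providing the summation. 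This is precisely the mechanism by which $C_\alpha$ appears in the final constant $C = 4C_\alpha$, and it mirrors the role of the Jensen/convexity argument in Lemma~\ref{l:volb}.

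The main obstacle, which I expect to require the most care, is organizing the double stopping-time (one for $f$ against $u$, one for $g$ against $v$) so that the $\alpha$-penalty can be distributed as $\alpha(\cdot)^{1/2}$ to each side in a way compatible with the asymmetric roles — note the bump \eqref{2-sided-H1-bump} carries $\alpha$ to the first power on each weight, but once we split over dyadic sublevels $2^k$ we effectively have $\alpha(2^k)^2$ available, and the summation $\sum_k 1/\alpha(2^k)^2 \le \sum_k 1/(2^k\alpha(2^k)) \cdot 2^k/\alpha(2^k)$ does not obviously telescope; the resolution is to observe that on the sublevel where $\bu^*\ci J/\bu\ci J \asymp 2^k$ the quantity $\bu^*\ci J \ge 2^k \bu\ci J$ makes the Carleson mass of those intervals small, gaining a factor $2^{-k}$, which combined with one factor $1/\alpha(2^k)$ gives the convergent series $\sum_k 2^{-k}/\alpha(2^k)$, bounded by $C_\alpha$. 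I would also need the elementary fact (from Section~\ref{s:LlogL-Lorentz}) that $\sum_{I\subset I_0} \ave{u}\ci I^2 \mathbf{1}$-type sums localize correctly so that $\bu^*$ genuinely behaves like $\|M(u\mathbf 1\ci{I_0})\|\ci{L^1(I_0)}$, which is the input that upgrades $\ave{u}\ci J$ to $\bu^*\ci J$ in the Carleson sum. Once these pieces are in place, a Cauchy--Schwarz in the sparse sum against $\|M^u f\|_{L^2(u)}\|M^v g\|_{L^2(v)} \lesssim \|f\|_{L^2(u)}\|g\|_{L^2(v)}$ closes the estimate with the stated constant.
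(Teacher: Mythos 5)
Your overall strategy (dualize, run stopping times, split the stopping family by the dyadic size of $\bu^*\ci I/\bu\ci I$, and pay the entropy penalty through a Carleson-type embedding) could in principle be made to work, but as written there are two genuine gaps. First, the displayed sparse reduction is not justified: your stopping time controls the ratios $\La fu\Ra\ci I/\La u\Ra\ci I$ (and $\La gv\Ra\ci I/\La v\Ra\ci I$), so inside a stopping block you only get $|\La fu\Ra\ci I|\lesssim(\inf_J M^u f)\,\La u\Ra\ci I$ with the average of $u$ on $I$, not on $J$; the inner sum is therefore $\sum_{I}a\ci I\La u\Ra\ci I\La v\Ra\ci I|I|$, and it cannot be replaced by $\La u\Ra\ci J\La v\Ra\ci J\sum_I a\ci I|I|\le\|a\|\ti{Carl}\La u\Ra\ci J\La v\Ra\ci J|J|$ (controlling $\La u\Ra\ci I$ by $\La u\Ra\ci J$ requires a different stopping time, on $\La u\Ra$ itself as in \eqref{StopMoment}; and summing $a\ci I\La u\Ra\ci I|I|$ over $I\subset J$ produces precisely $\|a\|\ti{Carl}\bu^*\ci J|J|$, which is where the $L\log L$ quantity has to enter). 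Moreover, even granting your reduced form, the factor $\La u\Ra\ci J\La v\Ra\ci J$ alone cannot close the estimate: the bump \eqref{2-sided-H1-bump} only yields $\La u\Ra\ci J\La v\Ra\ci J\le A/\alpha(1)^2$, i.e.\ a two-weight $A_2$ bound over a sparse family, which is insufficient, so the entire burden falls on your sublevel bookkeeping.

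Second, that bookkeeping is wrong at the decisive point. On the sublevel $\bu^*\ci J/\bu\ci J\asymp2^k$, the inequality $\bu^*\ci J\ge2^k\bu\ci J$ does not make the relevant Carleson mass small: relative to the measure $u\,dx$, the intensity of $\{a\ci I\La u\Ra\ci I|I|\}$ there is as large as $2^k\|a\|\ti{Carl}$ (this is \eqref{P_le_U^*/u}), a \emph{loss} of $2^k$, not a gain of $2^{-k}$. The entropy bump supplies $[2^k\alpha(2^k)]^{-1}$ on each side; after the $2^k$ loss is absorbed, the surviving series is $\sum_k\alpha(2^k)^{-1}$, which is comparable to $\int_1^\infty dt/(t\alpha(t))$ (Lemma \ref{sum-alpha_2^k}) and is exactly where $C_\alpha$ enters. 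Your final series $\sum_k2^{-k}/\alpha(2^k)$ converges for \emph{every} positive increasing $\alpha$ (and the claimed equivalence $\sum_k2^{-k}/\alpha(2^k)\asymp\int_1^\infty dt/(t\alpha(t))$ is false, e.g.\ for $\alpha\equiv1$); if your accounting were correct, the theorem would hold with no integrability assumption on $\alpha$, contradicting the sharpness discussion in Section \ref{s:Sharpness}. For comparison, the paper avoids coronas entirely: it splits each term of the bilinear form by Cauchy--Schwarz using \eqref{2-sided-H1-bump}, reducing the theorem to the embedding Theorem \ref{t:embed-03} with denominator $\alpha(\bu^*\ci I/\bu\ci I)\bu^*\ci I$, which is proved by a Bellman function built on the bound $P(J)\le\|a\|\ti{Carl}\,\bu^*\ci J/\bu\ci J$ for the $u$-weighted Carleson potential; a corona-free variant of that embedding (split $\cD$ by $\bu^*\ci I/\bu\ci I\asymp2^k$ and apply the martingale Carleson embedding on each piece, noting the Carleson constant $\lesssim2^k\|a\|\ti{Carl}$ there) would carry out correctly the scheme you are aiming at.
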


\begin{rem*}
One can state condition \eqref{2-sided-H1-bump} with 2 different functions $\alpha_1(\bu^*\ci I/\bu\ci I)$ and $\alpha_2(\bv^*\ci I/\bv\ci I)$, such that $\int_1^\infty \frac{dt}{t\alpha_k(t)}<\infty$ for $k=1,2$. However defining $\alpha(t) =\min\{\alpha_1(t), \alpha_2(t)\}$  one can easily see that $\alpha$ is an increasing function satisfying $\int_1^\infty \frac{dt}{t\alpha(t)} <\infty$. So,  replacing $\alpha_{1,2}$ with $\alpha$ we get a weaker condition, which is exactly \eqref{2-sided-H1-bump}. Thus, there is nothing to gain considering different function $\alpha$. 
\end{rem*}

Theorem \ref{t:lern-bump-02} follows immediately from the following ``embedding theorem''  via Cauchy--Schwarz. 

\begin{thm}
\label{t:embed-03}
Let $a=\{a\ci I\}\ci{I\in\cD} $, $a\ci I\ge 0$ be a Carleson sequence. Then for any $f\in L^2(u)$
\[
\sum_{I\in\cD} \frac{|\La fu\Ra\ci I|^2}{\alpha(\bu^*\ci I/\bu\ci I) \bu^*\ci I} a\ci I |I| \le C \|a\|^{\phantom 2}\ti{Carl} \|f\|\ci{L^2(u)}^2, 
\]
where $C=4C_\alpha$, 
\[
C_\alpha =  \frac{1}{\alpha(1)} + \int_1^\infty \frac{dt}{t\alpha(t)}. 
\]
\end{thm}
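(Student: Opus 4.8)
\textbf{Proof proposal for Theorem \ref{t:embed-03}.}

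The plan is to prove the Carleson embedding bound by the standard stopping-time (Corona) decomposition of the dyadic lattice $\cD$, adapted to the weight $u$, combined with the ``entropy'' gain coming from the penalty function $\alpha$ and the convergence of $C_\alpha$. Fix $f\in L^2(u)$; by splitting $f$ into positive and negative parts and by monotone convergence we may assume $f\ge 0$ and that only finitely many $a\ci I$ are nonzero, so all sums are finite. For a fixed top cube $I_0$ I would build a sequence of stopping cubes: starting from $I_0$, let the stopping children of a cube $J$ be the maximal $J'\subsetneqq J$ with either $\La fu\Ra\ci{J'} > 2\La fu\Ra\ci J$ (large average of $fu$) or $\bu^*\ci{J'} > 2\bu^*\ci J$ (large $L\log L$ average of $u$). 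Iterating produces a sparse-type stopping family $\mathcal S$, and every $I\in\cD$, $I\subset I_0$ has a well-defined stopping ancestor $\pi(I)\in\mathcal S$ with $\La fu\Ra\ci I \le 2\La fu\Ra\ci{\pi(I)}$ and $\bu^*\ci I \le 2\bu^*\ci{\pi(I)}$.

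The second key ingredient is a bound on $\sum_{I:\pi(I)=J} a\ci I|I|$. Here the point is that among the children-of-$J$ before stopping, the quantity $\bu^*\ci I/\bu\ci I$ — the ratio entering $\alpha$ — cannot stay large on too much of $J$: since $u\1\ci J \in L\log L(J)$ with $\|u\|\ci{L\log L(J)}\asymp \bu^*\ci J$, a Chebyshev-type argument for $L\log L$ gives $|\{x\in J:\, M(u\1\ci J)(x) > \lambda \bu\ci J\}| \lesssim |J|\, \bu^*\ci J/(\bu\ci J\,\lambda\log\lambda)$ roughly (more precisely one estimates the measure where the maximal function, hence the relevant ratio, exceeds a level $\lambda$). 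Summing $a\ci I|I|$ over the stopping cubes of $J$ using the Carleson property of $a$ and organizing by dyadic levels $\lambda\in[2^k,2^{k+1})$ of the ratio $\bu^*\ci I/\bu\ci I$, the factor $1/(\alpha(\bu^*\ci I/\bu\ci I))$ turns the geometric series into $\sum_k 2^{-k}/\alpha(2^k)$-type sums, which is controlled by $\int_1^\infty \frac{dt}{t\alpha(t)} + \frac1{\alpha(1)} = C_\alpha$ because $t\mapsto t\alpha(t)$ is increasing. This is the heart of the matter and where the hypotheses on $\alpha$ are consumed.

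Granting the per-stopping-cube estimate $\sum_{I:\pi(I)=J}\frac{a\ci I|I|}{\alpha(\bu^*\ci I/\bu\ci I)\bu^*\ci I} \lesssim C_\alpha \,\frac{\|a\|\ti{Carl}|J|}{\bu^*\ci J}\cdot\frac{1}{(\text{something} \ge \bu\ci J)}$, one replaces $\bu^*\ci I$ and $\La fu\Ra\ci I$ in the target sum by $\bu^*\ci{\pi(I)}$ and $\La fu\Ra\ci{\pi(I)}$ (losing only universal constants), reducing everything to
\[
\sum_{J\in\mathcal S}\frac{|\La fu\Ra\ci J|^2}{\bu^*\ci J}\cdot\frac{\|a\|\ti{Carl}|J|}{\bu^*\ci J}\,C_\alpha
\;\lesssim\; C_\alpha\|a\|\ti{Carl}\sum_{J\in\mathcal S}\Big(\frac{\La fu\Ra\ci J}{\bu^*\ci J}\Big)^2 \bu^*\ci J\,|J|.
\]
Wait — the bookkeeping must be done so that what remains is $\sum_{J\in\mathcal S}\La f\Ra\ci{J,u}^2\, u(J)$ where $\La f\Ra\ci{J,u}=\frac{1}{u(J)}\int_J fu$, because $\La fu\Ra\ci J^2|J|/\bu\ci J = \La f\Ra\ci{J,u}^2 u(J)$; then the sparseness of $\mathcal S$ together with the Carleson embedding theorem with respect to the measure $u\,dx$ (equivalently, $L^2(u)$-boundedness of the $u$-maximal function restricted to $\mathcal S$, which holds since $\mathcal S$ is sparse and hence Carleson for $u$) gives $\sum_{J\in\mathcal S}\La f\Ra\ci{J,u}^2 u(J)\lesssim \|f\|\ci{L^2(u)}^2$. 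Finally one sums over the disjointified top cubes $I_0$.

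\textbf{Main obstacle.} The delicate point, and the one I would spend the most care on, is the per-stopping-cube estimate: making precise the distributional bound for $M(u\1\ci J)$ in terms of $\bu^*\ci J$ and $\bu\ci J$, correctly inserting the $1/\alpha(\bu^*\ci I/\bu\ci I)$ factor, and summing the resulting layer-cake series to exactly $C_\alpha\|a\|\ti{Carl}$ — i.e.\ verifying that the monotonicity of $t\mapsto t\alpha(t)$ is precisely what converts $\sum_k a\ci{(\cdot)}$-type Carleson sums weighted by $\alpha(2^k)^{-1}$ into the integral $\int_1^\infty \frac{dt}{t\alpha(t)}$. Getting the constant to be exactly $4C_\alpha$ will require tracking the factors of $2$ from the stopping rule, so I would set up the stopping thresholds (and the $1/\alpha(1)$ boundary term) with that constant in mind.
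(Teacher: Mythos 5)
There is a structural gap at the center of your decomposition. Your stopping rules control $\La fu\Ra\ci I$ and $\bu^*\ci I$ \emph{from above} inside a corona ($\La fu\Ra\ci I\le 2\La fu\Ra\ci{\pi(I)}$, $\bu^*\ci I\le 2\bu^*\ci{\pi(I)}$), but the quantity $\alpha(\bu^*\ci I/\bu\ci I)\,\bu^*\ci I$ sits in the \emph{denominator}, so replacing it by data of the stopping ancestor would require a lower bound, which a corona never supplies: inside a corona both $\bu^*\ci I$ and the ratio $\rho\ci I:=\bu^*\ci I/\bu\ci I$ can collapse, since only their growth triggers stopping. Concretely, the per-stopping-cube estimate your scheme needs (so that, after multiplying by $\La fu\Ra\ci J^2=\La f\Ra_{J,u}^2\bu\ci J^2\,$, the $J$-sum becomes $\sum_J\La f\Ra_{J,u}^2u(J)$) is $\sum_{I:\pi(I)=J}a\ci I|I|/\bigl(\alpha(\rho\ci I)\bu^*\ci I\bigr)\lesssim C_\alpha\|a\|\ti{Carl}|J|/\bu\ci J$, and this is false: take $J=[0,1]$, $u=\1\ci{[0,1/2]}+\e\1\ci{(1/2,1]}$, $f=\1\ci J$, and $a$ a Carleson sequence supported on dyadic intervals inside $(1/2,1]$. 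No stopping occurs there (both $\La fu\Ra$ and $\bu^*$ decrease), $\rho\ci I=1$, $\bu^*\ci I=\e$, so the left side is of order $\|a\|\ti{Carl}/(\alpha(1)\e)$ while the right side stays bounded as $\e\to0$. In the true theorem this degeneration is compensated by the \emph{local} numerator via Cauchy--Schwarz, $\La fu\Ra\ci I^2\le\La f^2u\Ra\ci I\bu\ci I\le\La f^2u\Ra\ci I\bu^*\ci I$, but freezing the numerator at its stopping value destroys exactly this compensation. The heuristic behind your ``heart of the matter'' step also does not hold: $\rho\ci I\ge\lambda$ only says $\La M(u\1\ci I)\Ra\ci I\ge\lambda\bu\ci I$, a comparison with the local average $\bu\ci I$, not with $\bu\ci J$, so cubes with large $\rho\ci I$ need not lie in the small set $\{M(u\1\ci J)>\lambda\bu\ci J\}$ and the $L\log L$ Chebyshev bound does not localize them.

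Two further problems: the claim ``$\mathcal S$ is sparse and hence Carleson for $u$'' is false for a general weight in this non-homogeneous setting (the chain $J_k=[0,2^{-k}]$ is sparse, yet for $u(x)=1/\bigl(x\log^2(e/x)\bigr)$ one has $u(J_k)\asymp 1/k$ and $\sum_k u(J_k)=\infty$); to get a $u$-Carleson stopping family you must stop on $u$-averages $\La f\Ra_{J,u}$, and your two rules with threshold $2$ only give $|\bigcup J'|\le|J|$, not sparseness. For comparison, the paper does not use a corona at all: it runs a Bellman-function argument with $x\ci I=\La fu\Ra\ci I/\bu\ci I$ and the Carleson potential $y\ci I=P(I)\le\|a\|\ti{Carl}\bu^*\ci I/\bu\ci I$ (inequality \eqref{P_le_U^*/u}), using a convex $\cB(x,y)=x^2m(y)$ with $-m'(y)\ge 1/(y\alpha(y))$. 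If you want a corona-free, non-Bellman route, note that the statement is exactly the martingale Carleson embedding for $d\mu=u\,dx$ with weights $w\ci I=a\ci I\mu(I)/\bigl(\alpha(\rho\ci I)\rho\ci I\bigr)$, so it suffices to verify the testing condition $\sum_{I\subset J}w\ci I\le C_\alpha\|a\|\ti{Carl}\mu(J)$; this can be checked by writing $\f(\rho\ci I)=\int_{\rho\ci I}^\infty(-d\f(t))$ with $\f(t)=1/(t\alpha(t))$, covering for each level $t$ the cubes with $\rho\ci I<t$ by the maximal cubes $R$ with $\rho\ci R<t$, and applying \eqref{P_le_U^*/u} in each $R$ to get $\sum_{I\subset R}a\ci I\mu(I)\le\|a\|\ti{Carl}\,t\,\mu(R)$, the $R$ being disjoint; integration by parts then produces exactly $C_\alpha$ and the embedding theorem the factor $4$. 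That is a legitimate repair, but it is a different argument from the one you propose, whose per-corona estimate and $u$-Carleson step do not go through.
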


To prove this theorem we can assume without loss of generality that $\|a\|^{\phantom 2}\ti{Carl}=1$. Let us introduce the following weighted \emph{Carleson potential} 
\[
P(J) = \mu(J)^{-1}\sum_{I\in\cD:I\subset J} a\ci{I} \mu(I), 
\]
where $d\mu = u dx$, so $\mu(I) =\La u\Ra\ci I |I|$. Note, that $\mu$ here is not the underlying measure, 
$\mu(A) =\int_A u dx$, and the underlying measure of $A$ is denoted as $|A|$!

If $P(J)\le K<\infty$, then the martingale Carleson Embedding theorem implies that 
\[
\sum_{I\in\cD} 
\frac{|\La fu\Ra\ci I|^2}{ \La u\Ra\ci I} a\ci I |I|
 =
 \sum_{I\in\cD} \left| \mu(I)^{-1} \int_I f d\mu \right|^2 a\ci I \mu(I) 
\le 4 K \|f\|_{L^2(\mu)}^2.  
\]

Unfortunately, the potential $P$ is unbounded, so the above estimate is not true under the assumptions of the theorem: we need to put something bigger in the denominator to get a true estimate.  And the proof becomes more involved. 

Namely, we can only guarantee that 
\begin{align}
\label{P_le_U^*/u}
P(I) =\mu(I)^{-1}\sum_{I'\in \cD, I'\subset I} a\ci{I'} \La u \Ra\ci{I'} |I'| \le \mu(I)^{-1} \|a\|^{\phantom 2}\ti{Carl} \La M \1\ci{I} u\Ra\ci{I} |I| \le  \|a\|\ti{Carl} \bu^*\ci I/\bu\ci I;
\end{align}
the first inequality here is the standard estimate of the Carleson embedding via maximal function. 

\subsection{Embedding theorem via  Bellman function}
\label{s:Bell_for_embedd-01}

To prove the embedding theorem we will use the Bellman function technique. Namely, suppose we constructed a function $\cB(x, y)$, $x\in \R$, $y\ge 0$, such that 

\begin{enumerate}
\item $\cB $ is convex;
\item $\displaystyle -\frac{\p\cB}{\p y} \ge 
x^2 \f(y)$, 
where 
\[
\f(y) := \left\{ \begin{array}{ll} {1}/(y\alpha(y)),  \qquad & y\ge 1, \\ 
 1/\alpha(1), & 0<y<1; \end{array} \right.
\]
\item $0\le \cB(x, y)\le C x^2$, where $C=4C_\alpha$ 
\end{enumerate}
Then the embedding theorem is proved. 

Namely, define
\[
x\ci I := \frac{\La fu\Ra\ci I}{ \La u \Ra \ci I} := \frac{\bff\ci I }{ \bu\ci I} , \qquad 
y\ci I = P(I),  
\]
and let $B\ci I := \cB(x\ci I, y\ci I)$. 

Note that the averages 
\[
\overline x\ci I := \mu(I)^{-1} \sum_{I'\in\ch(I)} x\ci{I'} \mu(I'), \qquad \overline y\ci I:= \mu(I)^{-1} \sum_{I'\in\ch(I)} y\ci{I'} \mu(I')
\]
satisfy $\overline x\ci I = x\ci I$ and 
\begin{align}
\label{y-y_av}
y\ci I - \overline y\ci I = a\ci I . 
\end{align}
Therefore, using convexity of $\cB$ and then mean value theorem together with the property \cond2 and \eqref{y-y_av} we get that
\[
- B\ci I + \sum_{I'\in\ch(I) } \frac{\mu(I')}{\mu(I)} B\ci{I'}  \ge \cB(\overline x\ci I, \overline y\ci I) -B\ci I
= \cB(x\ci I, \overline y\ci I) -B\ci I \ge \left(\frac{\bff\ci I}{\bu\ci I}\right)^2\f(y\ci I) a\ci I .   
\]
Note that the mean value theorem gives us the estimate with $\f(y)$ for some $y\in (\overline y\ci I , y\ci I)$; since $t\mapsto t\alpha(t)$ is increasing, $\f$ is decreasing, and we can replace $\f(y)$ by $\f(y\ci I)$.
Moreover, since $y\ci I=P\ci I \le \bu^*\ci I/\bu\ci I$ (see \eqref{P_le_U^*/u}) we have $\f(y\ci I)\ge \f(\bu^*\ci I/\bu\ci I)$, so we can replace $\f(y\ci I)$ by $\f(\bu^*\ci I/\bu\ci I)$.    Multiplying by $\mu(I)$ we can rewrite this new estimate as 
\[
 \sum_{I'\in\ch(I) } \mu(I')  B\ci{I'}\ - \mu(I) B\ci I  \ge\frac{\bff\ci I^2}{\bu\ci I \alpha(\bu^*\ci I/\bu\ci I) \bu^*\ci I/\bu\ci I} a\ci I |I|
\]
Writing this estimate for $I=I_0$ and going $n-1$ generations down we get using $\cB\ge0$
\[
\sum_{\substack{I\in \ch_k(I_0),\\ 0\le k <n }} \frac{\bff\ci I^2}{\alpha(\bu^*\ci I/\bu\ci I) \bu^*\ci I} a\ci I |I| 
\le \sum_{I\in \ch_n(I_0)} \mu(I) B\ci I - \mu(I_0) B\ci{I_0} \le  \sum_{I\in \ch_n(I_0)} \mu(I) B\ci I  .
\]
The estimate $\cB(x, y) \le C x^2$ implies 
\[
\mu(I) B\ci I \le C \frac{\bff\ci I^2}{\bu} |I| \le C \int_I f^2 u dx;
\]
the last inequality is just the Cauchy--Schwarz. Therefore
\[
\sum_{k=0}^{n-1}\ \sum_{I\in \ch_n(I_0)} \frac{\bff\ci I^2}{\alpha(\bu^*\ci I/\bu\ci I) \bu^*\ci I} a\ci I |I|  \le C \int_{I_0} f^2 u dx,  
\]
and letting $n\to \infty$ we get that 
\[
\sum_{I\in\cD, I\subset I_0} \frac{\bff\ci I^2}{\alpha(\bu^*\ci I/\bu\ci I) \bu^*\ci I} a\ci I |I| \le C \int_{I_0} f^2 u dx. 
\]
Summing the above estimate for all $I_0\in \cD_m$  and letting $m\to -\infty$, we get the conclusion of Theorem \ref{t:embed-03}. \hfill \qed

\subsection{Constructing the Bellman function}
\label{s:constrBell-01}
We will look for a function $\cB:\R\times \R_+ \to [0,\infty)$ of form $\cB(x,y)= x^2 m(y)$, where $m$ is a $C^1$ convex function. The Hessian of $\cB$ is easy to compute: 
\[
\left(\begin{array}{cc}
\cB_{xx} & \cB_{xy} \\ 
\cB_{xy} & \cB_{yy} \\ 
\end{array}\right)
=
\left(\begin{array}{cc}
2m(y) & 2 x m'(y) \\ 
2 x m'(y) & x^2 m''(y) \\ 
\end{array}\right) \,.
\]
It is clear that the Hessian is positive semidefinite (i.e.~the function $\cB$ is convex) if and only if 
\begin{align}
\label{conv-02}
\left( \begin{array}{cc}
2m(y) & 2 m'(y) \\ 
2 m'(y) &  m''(y) \\ 
\end{array} \right) 
\ge 0 \,, 
\end{align}
(if $m''(x) = \infty$ the \eqref{conv-02} is automatically satisfied),  
or, equivalently (if we assume that $m(y)>0$ for all $y$)
\[
2(m')^2\le m m''. 
\]

Note, that property \cond2 of $\cB$ is equivalent to the estimate 
\begin{align}
\label{est_m'}
-m'(t) \ge \frac1{t\alpha(t)} =: \f(t). 
\end{align}

\begin{lm}
\label{l:m'}
Let $\f : \R_+ \to [0,\infty)$ be a bounded decreasing function such that 
\[
\|\f\|_1 = \int_0^\infty \f(t) dt <\infty. 
\]
Then there exists a bounded decreasing convex $C^1$-function $m : [0,\infty)\to [0,\infty)$, $m(0)\le 4 \|\f\|_1$,  satisfying \eqref{conv-02} 
 and such that 
\[
-m'(t)\ge \f(t) \qquad \forall t>0 .
\] 
Moreover, $-m'(0_+)\le 4 \f(0_+)= 4\lim_{t\to 0_+} \f(t)$. 
\end{lm}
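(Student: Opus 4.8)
The plan is to construct $m$ explicitly by choosing its derivative. Set $g(t) := \f(t) + \e(t)$ for a suitable small decreasing correction $\e$ to be determined, and let $m(t) := \int_t^\infty g(s)\,ds$, so that $m$ is automatically nonnegative, decreasing, and $C^1$ with $-m'(t) = g(t) \ge \f(t)$. The real content is arranging the convexity inequality $2(m')^2 \le m\,m''$, i.e.
\[
2 g(t)^2 \le \left(\int_t^\infty g(s)\,ds\right)\left(-g'(t)\right),
\]
together with the size bounds $m(0) \le 4\|\f\|_1$ and $-m'(0_+) = g(0_+) \le 4\f(0_+)$. Since $\f$ need not be smooth or strictly decreasing, I cannot use $\f$ itself; the correction term $\e$ must simultaneously dominate any lack of decrease and supply a strictly positive $-g'$.

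A clean way to force the differential inequality is to look for $g$ of the form $g(t) = m(t) h(t)$ where $h$ is chosen so that the convexity condition becomes a condition on $h$ alone. Indeed $m' = -g = -mh$, so $m(t) = m(0)\exp\!\left(-\int_0^t h\right)$, and then $-g' = -(mh)' = m(h^2 - h')$, so the convexity inequality $2g^2 \le m(-g')$ reads $2 m^2 h^2 \le m^2(h^2 - h')$, i.e.\ $h^2 + h' \le 0$, equivalently $(1/h)' \ge 1$. So it suffices to pick any positive decreasing $h$ with $(1/h)$ increasing at rate $\ge 1$ — for instance take $1/h(t)$ to be a concave-down-corrected primitive — while also ensuring $g = mh \ge \f$. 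The constraint $g \ge \f$ translates, after dividing by $m$, into $h(t) \ge \f(t)/m(t)$; since $m(t) = \int_t^\infty g \ge \int_t^\infty \f$ once $g\ge \f$, a bootstrap/fixed-point argument or a direct verification with an explicit choice like $h(t) = 1/(t+c)$ on a truncated range should close the loop.

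Concretely I would proceed as follows. First reduce to the case $\|\f\|_1 = 1$ by scaling the $t$-variable (which scales $\|\f\|_1$) — actually scaling $t$ scales both $\f$ and $\|\f\|_1$, so normalize $\f(0_+) = 1$ instead, or just track constants. Second, define $F(t) := \int_t^\infty \f(s)\,ds$, so $F$ is convex decreasing with $F(0) = \|\f\|_1$ and $-F' = \f$; the issue is that $F$ itself fails $2(F')^2 \le F F''$ near points where $\f$ is flat. Third, perturb: set $m(t) := 2 F(0) e^{-t/(2F(0))} \cdot (\text{something})$ — more precisely I expect the right answer is roughly $m(t) = 4\|\f\|_1 \cdot \big(1 + \tfrac{t}{c}\big)^{-1}$ type decay on the relevant scale, chosen so $-m'(0) = 4\f(0_+)$ and $m(0) = 4\|\f\|_1$ match the claimed constants, with $h = -m'/m$ satisfying $(1/h)' \ge 1$; then verify $-m'(t) \ge \f(t)$ using $\f(t) \le \min\{\f(0_+), \|\f\|_1/t\}$ (the second from monotonicity: $t\f(t) \le \int_0^t \f \le \|\f\|_1$). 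The main obstacle is this last verification: making one explicit convex $m$ with the two prescribed boundary constants $4\|\f\|_1$ and $4\f(0_+)$ simultaneously dominate $\f$ everywhere, given only the two crude envelope bounds on $\f$; the factor $4$ is presumably exactly what is needed for the envelope $\min\{4\f(0_+),\ 4\|\f\|_1/t\}$-type bound to be beaten by a genuinely convex profile, and pinning down that elementary calculus estimate is where the work lies.
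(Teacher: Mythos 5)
Your reduction is sound as far as it goes: writing $-m'=g=mh$ and turning \eqref{conv-02} into $h^2+h'\le 0$, i.e.\ $(1/h)'\ge 1$, is correct, and your candidate profile $m(t)=A(1+t/c)^{-1}$ (equivalently $h(t)=1/(t+c)$) is indeed the right elementary building block. But the concrete plan you propose to close the argument cannot work. You want a single such profile, determined only by the two numbers $\|\f\|_1$ and $\f(0_+)$, and you want to verify $-m'\ge\f$ from the envelope bound $\f(t)\le\min\{\f(0_+),\|\f\|_1/t\}$. Any admissible $m$ is bounded and decreasing, so $\int_0^\infty(-m'(t))\,dt\le m(0)\le 4\|\f\|_1<\infty$, whereas the envelope $\min\{\f(0_+),\|\f\|_1/t\}$ is not integrable; hence no admissible $m$ can dominate the envelope, and no choice of the constant (the factor $4$ included) rescues this. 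Concretely, for your explicit ansatz $-m'(t)=Ac/(c+t)^2$ decays like $t^{-2}$, while a perfectly legitimate $\f$ such as $\f(t)=1$ on $[0,1]$, $\f(t)=1/(t\ln^2(et))$ for $t>1$ (bounded, decreasing, integrable) is eventually much larger than any $C/t^2$. So the construction of $m$ must use finer information about $\f$ than $\|\f\|_1$ and $\f(0_+)$; the ``bootstrap/fixed-point'' step you defer is exactly where the real work is, and it is not supplied.

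The paper closes this gap by superposition over scales rather than a single profile. One writes $\f(y)\le\int_{\R_+} r^{-1}\1\ci{[0,r]}(y)\,d\mu(r)$ with a nonnegative measure $\mu$ of total mass $\|\f\|_1$ (the layer-cake decomposition of a decreasing function into boxes), solves the box case with $m_r(y)=4/(1+y/r)$, which satisfies $-m_r'\ge r^{-1}\1\ci{[0,r]}$, and sets $m(y)=\int_{\R_+} m_r(y)\,d\mu(r)$. The key structural point — which your scalar formulation $2(m')^2\le mm''$ (or $h^2+h'\le0$) obscures — is that \eqref{conv-02} is a matrix positivity condition \emph{linear} in $m$, hence preserved under this averaging, even though the scalar inequality is not additive. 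The constants then come out automatically: $m(0)=4\mu(\R_+)=4\|\f\|_1$ and $-m'(0_+)=4\int r^{-1}d\mu(r)=4\f(0_+)$. If you want to salvage your route, you would have to let $h$ (equivalently the measure of scales) depend on the whole profile of $\f$, which essentially reproduces this superposition argument.
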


\begin{proof}
Consider first the case when $\f=\1\ci{[0,1]}$. Then the function $m=m_1$, $m_1(y) = 4/(1+y)$ satisfies \eqref{conv-02} and 
\begin{align}
\label{m1'}
-m_1'(y) = 4(1+y)^{-2} \ge \1\ci{[0,1]}(y) \,.
\end{align}
Defining $m_r(y) := m_1(y/r) = 4/(1+y/r)$, $r>0$ we get that 
\begin{align}
\label{m_r'>1}
-m_r'(y) = 4 r^{-1} (1+y/r)^{-2} \ge r^{-1}\1\ci{[0,1]} (y/r) = r^{-1}\1\ci{[0,r]} (y). 
\end{align}

Note, that  any function $\f$ from the lemma can be dominated by a multiple of a function from the convex hull of function $r^{-1}\1_{[0,r]}$. Namely, there exists a non-negative measure $\mu$ on $\R_+$ of total mass $\|\f\|_1$ such that for  all $y>0$
\begin{align}
\label{phi-int-repr}
\f(y) \le \int_{\R_+} r^{-1}\1\ci{[0,r]}(y) d\mu(r)
\end{align}
and for almost all $y>0$ there is the equality (for an upper semi-continuous $\f$ the equality holds everywhere). 

Since the property \eqref{conv-02} is preserved for linear combinations we get integrating \eqref{m_r'>1} that the function $m$
\[
m (y) := \int_{\R_+} m_r(y) d\mu(r) 
\]
satisfies the conclusion of the lemma. 

To justify the changing the order of integral and derivative when integrating \eqref{m_r'>1} we can first define 
\begin{align}
\label{def_m'}
m' (y) : = \int_{\R_+} m_r'(y) d\mu(r)
\end{align}
and then conclude that by Tonelli and the fundamental Theorem of Calculus 
\begin{align*}
m(y) = \int_{y}^\infty -m'(x) dx  & = \int_{y}^\infty \int_{\R_+} -m_r'(x) d\mu(r) dx 
\\
& = 
\int_{\R_+} \int_{y}^\infty  -m_r'(x) dx d\mu(r) = 
\int_{\R_+} m_r(y) d\mu(r). 
\end{align*}

To get the estimate $-m'(0_+)\le 4 \f(0_+)$ we notice that the monotone convergence theorem and the fact that there is an equality for almost all $y>0$ in \eqref{phi-int-repr}  imply that 
\[
\f(0_+) = \int_{\R_+} r^{-1} d\mu(r) . 
\] 
On the other hand, applying the monotone convergence theorem to \eqref{def_m'} (with sign ``$-$'') as $y\searrow 0$ and recalling that $-m_r'(y) = 4 r^{-1} (1+y/r)^{-2}$ we get that
\[
-m'(0_+) = 4 \int_{\R_+} r^{-1} d\mu(r) .
\]
\end{proof}

Applying Lemma \ref{l:m'} let us pick a bounded decreasing function $m$ satisfying \eqref{conv-02} such that 
\begin{align}
\label{m'-maj}
-m'(t) \ge \f(t), \qquad \f(t) =
\left\{
\begin{array}{ll} 1/(t\alpha(t)), \qquad & t\ge 1 \\
1/\alpha(1), & 0<t\le 1. 
\end{array}\right.
\end{align}
Note, that by Lemma \ref{l:m'}
\[
m(t) \le m(0_+) \le 4 \|\f\|_1 = 4 C_\alpha. 
\]
So, we constructed the function $\cB$ satisfying the conditions \cond1--\cond3 with $C=4C_\alpha$, and the embedding theorem is proved.

\begin{rem}
\label{r:conv-m'}
As one can see from the proof of Lemma \ref{l:m'}, the function $-m'$ is convex. Indeed, one can see from \eqref{m1'} that the function $-m_1'$  is convex, and so are the functions $-m_r'$. Together with the identity \eqref{def_m'} it immediately implies the convexity of $-m'$. 

We do not need this fact right now, we will need it in later. 
\end{rem}

\section{Concavity properties of 
\texorpdfstring{$\bu^*$}{u*<sub>I}}
\label{s:ConvOf_n}

For treating two-sided bumping for Haar shifts is is more convenient to consider $\bu^*\ci I:=\|u\|\ci{\Lambda_{\psi_0}(I)}$, $\psi_0(s)=s\ln(e/s)$.

We want to investigate the behavior of quantity $\bu^*\ci I:=\|u\|\ci{\Lambda_{\psi_0}(I)}$, $\psi_0(s)=s\ln(e/s)$, as we go from $I\in\cD$ to its children, we want to show that it has a supermartingale (concave) behavior, and give a quantitative characteristic of this concavity.  Recall that for $u\ge0$ any Lorentz norm $\|u\|\ci{\Lambda_{\psi}(I)}$ can be computed using the normalized distribution function $N_I^u$, 
\begin{align}
\label{NormDistrFn}
N_I^u (t):= |I|^{-1}\left|\left\{ x\in I : u (x)>t\right\}\right|, \qquad t\ge 0, 
\end{align}
namely 
\[
\|u\|\ci{\Lambda_\psi(I)} := \int_0^1 \psi(N_I(t)) dt . 
\]
The distribution functions $N_I^u$ possess very simple martingale behavior, 
\begin{align}
\label{mart-N}
N_{I_0}^u = \sum_{I\in\ch(I_0)} \frac{|I|}{|I_0|} N_I^u, 
\end{align}
so it is convenient to treat $\bu^*$ as a functional on normalized  distribution functions $N$, i.e.~on decreasing functions $N:\R_+\to [0,1]$, 
\begin{align}
\label{u^*-N}
\bu^*(N) := \int_0^1 \psi_0(N(t)) dt, \qquad \psi_0(s) =s\ln(e/s). 
\end{align}
We will also need the functional $N\mapsto \bu(N)$, 
\begin{align}
\label{u-N}
\bu(N):=\int_0^\infty N(t) dt. 
\end{align}
Clearly if $N=N_I^u$, then $\bu(N_I^u) = \La u \Ra\ci I =:\bu\ci I$.


Let $N=N_0$ and $N_1$ be two distribution functions, and let $\sd N:=N_1 -N$. We want to compute the second derivative of the function $\theta \mapsto \bu^*(N+\theta\sd N)$. 

Let $N_\theta :=N + \theta\sd N$, and let 
\[
\bu_\theta := \int_0^\infty N_\theta(t) dt. 
\]
If we think of the function $N_\theta$ as of the distribution function of a function $u_\theta$ on, say, $[0,1]$, then $\bu_\theta$ is the average of the function $u_\theta$. 
Also, denote
\begin{align}
\label{delta-u}
\sd \bu := \bu_1-\bu_0 = \int_0^\infty \sd N(t) dt, \qquad \bu_\sd := \int_0^\infty | \sd N(t)| dt
\end{align}

\begin{lm}
\label{l:conv-01}
Let $N$, $N_1$ be compactly supported distribution functions taking finitely many values. Then 
\[
-\frac{d^2 \bu^*(N_\theta)}{d\theta^2} 
\ge \frac{(\bu_\sd)^2}{\bu_\theta} \ge \frac{|\sd\bu|^2}{\bu_\theta} 
\]
\end{lm}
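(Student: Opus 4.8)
The plan is to compute $\frac{d^2}{d\theta^2}\bu^*(N_\theta)$ directly from the explicit formula $\bu^*(N)=\int_0^1\psi_0(N(t))\,dt$ with $\psi_0(s)=s\ln(e/s)$, using the hypothesis that $N,N_1$ take finitely many values and are compactly supported so that differentiation under the integral sign is routine. Since $\psi_0''(s)=-1/s$ (because $\psi_0(s)=s-s\ln s$ gives $\psi_0'(s)=-\ln s$ and $\psi_0''(s)=-1/s$), we get
\begin{align*}
-\frac{d^2}{d\theta^2}\bu^*(N_\theta)=-\int_0^\infty \psi_0''(N_\theta(t))\,(\sd N(t))^2\,dt=\int_0^\infty \frac{(\sd N(t))^2}{N_\theta(t)}\,dt.
\end{align*}
So the whole statement reduces to the pointwise-in-$\theta$ inequality
\begin{align*}
\int_0^\infty \frac{(\sd N(t))^2}{N_\theta(t)}\,dt\ \ge\ \frac{\left(\int_0^\infty |\sd N(t)|\,dt\right)^2}{\int_0^\infty N_\theta(t)\,dt}\ =\ \frac{(\bu_\sd)^2}{\bu_\theta},
\end{align*}
and then the trivial bound $\bu_\sd\ge|\sd\bu|$ from $|\sd\bu|=|\int\sd N|\le\int|\sd N|=\bu_\sd$ gives the second inequality for free.

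\textbf{The key estimate.} The displayed inequality above is exactly Cauchy--Schwarz: writing $|\sd N|=\frac{|\sd N|}{\sqrt{N_\theta}}\cdot\sqrt{N_\theta}$ and applying Cauchy--Schwarz on the integral over $\{t:N_\theta(t)>0\}$,
\begin{align*}
\left(\int |\sd N(t)|\,dt\right)^2\le\left(\int \frac{(\sd N(t))^2}{N_\theta(t)}\,dt\right)\left(\int N_\theta(t)\,dt\right).
\end{align*}
One small point to check is that on the support of $\sd N$ we indeed have $N_\theta(t)>0$ for $\theta\in[0,1]$: if $\sd N(t)=N_1(t)-N(t)\ne0$ then at least one of $N(t),N_1(t)$ is strictly positive, and since $N_\theta(t)$ is a convex combination of the two nonnegative values, $N_\theta(t)>0$ unless we are at an endpoint where the positive one has weight zero; at $\theta=0$ or $\theta=1$ the factor $(\sd N(t))^2/N_\theta(t)$ could be $+\infty$ on a set where the numerator is positive, but then the left side is $+\infty$ and the inequality holds trivially, while for $\theta\in(0,1)$ strict positivity holds. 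This is the only genuinely fiddly point, and it is minor; the finitely-many-values hypothesis makes all the integrals finite sums and removes any measure-theoretic subtlety.

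\textbf{Main obstacle.} There is no serious obstacle: the lemma is essentially "second derivative of $\int\psi_0\circ N_\theta$ equals $\int|\sd N|^2/N_\theta$, then Cauchy--Schwarz." The only care needed is (i) justifying the two differentiations under the integral sign, which is immediate since $N_\theta$ is a piecewise-constant function of $t$ with finitely many jumps moving affinely in $\theta$ on a compact support, so $\bu^*(N_\theta)$ is a finite sum of $C^\infty$ functions of $\theta$ on $(0,1)$; and (ii) the boundary-$\theta$ degeneration noted above, handled by observing the claimed inequality is vacuous when the left-hand side is infinite. I would present it in that order: differentiate, identify the second derivative, apply Cauchy--Schwarz, then drop $\bu_\sd$ to $|\sd\bu|$.
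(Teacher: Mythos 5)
Your proof is correct and follows exactly the paper's argument: differentiate $\bu^*(N_\theta)=\int\psi_0(N_\theta(t))\,dt$ under the integral using $\psi_0''(s)=-1/s$, apply Cauchy--Schwarz to get the first inequality, and use $|\sd\bu|\le\bu_\sd$ for the second. The extra care you take about positivity of $N_\theta$ on the support of $\sd N$ is a fine (if unneeded in the paper's brevity) touch; nothing further is required.
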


\begin{proof}
Recall that 
\[
\bu^*(N_\theta) = \int_0^\infty \psi (N(t)+\theta\sd N(t)) \,dt. 
\]
Since 
$\psi''(s) =-1/s$, we get, differentiating under the integral that 
\begin{align}
\label{d2n}
- \frac{d^2\bu^*(N_\theta)}{d\theta^2} & = \int_0^\infty \frac{\sd N(t)^2}{N_\theta(t)} dt .
\end{align}
Note that under the assumptions of the lemma there is no problem in justifying differentiating the integral.

The first inequality follows from the Cauchy--Schwartz:
\begin{align*}
(\bu_\sd)^2 = \left(\int_0^\infty |\sd N(t)| \,dt \right)^2 
& \le \left(\int_0^\infty \frac{\sd N(t)^2}{N_\theta(t)} dt \right) \left( \int_0^\infty N_\theta (t) dt \right) \\
& = 
\left(\int_0^\infty \frac{\sd N(t)^2}{N_\theta(t)} dt \right) \bu_\theta. 
\end{align*}
The second inequality follows trivially because $|\sd \bu | \le \bu_\sd$. 
\end{proof}

\begin{cor}
\label{c:conv-02}
Let $N$, $N_1$, $N_2$ be the distribution functions such that $N=(N_1+N_2)/2$ and $\bu(N_{1,2})<\infty$. Denote $\sd N:= N_1-N$ and let $\sd \bu$ and $\bu_\sd$ be defined by \eqref{delta-u}. Then 
\[
\bu^*(N) - \frac{\bu^*(N_1)+\bu^*(N_2)}2 \ge \frac12\cdot \frac{(\bu_\sd)^2}{\bu} \ge \frac12 \cdot \frac{(\sd\bu)^2}{\bu}
\] 
\end{cor}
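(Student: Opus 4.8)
The plan is to derive Corollary \ref{c:conv-02} from Lemma \ref{l:conv-01} by a straightforward integration/Taylor argument, with only a minor technical wrinkle coming from the fact that Lemma \ref{l:conv-01} is stated for compactly supported step distribution functions. First I would reduce to that case: given distribution functions $N_1, N_2$ with $\bu(N_{1,2})<\infty$, approximate each from below by an increasing sequence of compactly supported distribution functions taking finitely many values (truncate in both the $t$-variable and the value-variable), so that $\bu$ of the approximants converges to $\bu(N_{1,2})$ by monotone convergence, and likewise $\bu^*$ converges (the functional $N\mapsto \int_0^1\psi_0(N(t))\,dt$ is monotone and continuous under such approximation since $\psi_0$ is continuous and bounded on $[0,1]$). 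One must take some care that the approximants to $N_1$ and $N_2$ are chosen compatibly so that $N=(N_1+N_2)/2$ is approximated by the corresponding averages; this is easy since averaging preserves the step/compact-support structure.

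With the reduction in place, fix admissible $N_1,N_2$, set $\sd N=N_1-N=(N_1-N_2)/2$, and consider $g(\theta):=\bu^*(N_\theta)$ for $N_\theta=N+\theta\sd N$, $\theta\in[-1,1]$, noting $N_{-1}=N_2$ and $N_{1}=N_1$. By Lemma \ref{l:conv-01}, $-g''(\theta)\ge (\bu_\sd)^2/\bu_\theta$ for all $\theta$ in the relevant range. Since $\bu_\theta=\int_0^\infty N_\theta(t)\,dt$ is an affine function of $\theta$ that is nonnegative on $[-1,1]$, it is maximized at an endpoint, so $\bu_\theta\le \max\{\bu_1,\bu_{-1}\}=\max\{\bu(N_1),\bu(N_2)\}$; but in fact we want it bounded by $\bu=\bu(N)$. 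Here I use that $\bu_\theta$ is affine and $\bu_0=\bu$, so for $\theta\in[-1,1]$ we have... actually the cleaner route is: by Cauchy--Schwarz exactly as in the lemma but keeping $N_\theta$ in the denominator, and then noting $\int_0^\infty N_\theta\,dt$ need not be $\le\bu$. So instead I would invoke the stronger pointwise bound from the lemma's proof directly: $-g''(\theta)\ge\int_0^\infty \sd N(t)^2/N_\theta(t)\,dt$, and then bound $N_\theta(t)\le N_1(t)+N_2(t)=2N(t)$ pointwise (valid since $N_\theta$ is a convex combination of $N_1,N_2$ for $\theta\in[-1,1]$), giving $-g''(\theta)\ge \tfrac12\int_0^\infty \sd N(t)^2/N(t)\,dt\ge \tfrac12(\bu_\sd)^2/\bu$ by Cauchy--Schwarz against $\int N\,dt=\bu$.

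Finally I would integrate. By Taylor's theorem with integral remainder, or simply by the standard midpoint estimate for a function with controlled second derivative,
\[
\frac{g(1)+g(-1)}{2}-g(0)=\int_0^1 (1-\theta)\,\frac{g''(\theta)+g''(-\theta)}{2}\,d\theta \le -\frac12\cdot\frac{(\bu_\sd)^2}{\bu}\int_0^1 (1-\theta)\cdot 2\,d\theta\cdot\tfrac{1}{?}
\]
—more cleanly, $g(0)-\tfrac{g(1)+g(-1)}{2}=-\tfrac12\int_{-1}^1(1-|\theta|)g''(\theta)\,d\theta\ge \tfrac12\cdot\tfrac{(\bu_\sd)^2}{\bu}\int_{-1}^1(1-|\theta|)\,d\theta=\tfrac12\cdot\tfrac{(\bu_\sd)^2}{\bu}$, since $\int_{-1}^1(1-|\theta|)\,d\theta=1$. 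Translating back, $g(0)=\bu^*(N)$ and $(g(1)+g(-1))/2=(\bu^*(N_1)+\bu^*(N_2))/2$, which is exactly the claimed inequality; the second inequality $\bu_\sd\ge|\sd\bu|$ is trivial from $|\int \sd N|\le\int|\sd N|$. The only real obstacle is the bookkeeping in the approximation step — ensuring all three functionals ($\bu^*(N)$, $\bu^*(N_1)$, $\bu^*(N_2)$) pass to the limit simultaneously and that $\bu(N)<\infty$ (which follows from $\bu(N_1),\bu(N_2)<\infty$ since $N=(N_1+N_2)/2$) — but this is routine monotone-convergence fare; the analytic heart is entirely contained in Lemma \ref{l:conv-01}.
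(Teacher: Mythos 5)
Your overall strategy---reduce by approximation to compactly supported, finitely valued distribution functions, apply Lemma \ref{l:conv-01} along the segment $N_\theta=N+\theta\,\sd N$, and then integrate in $\theta$---is essentially the paper's proof (the paper uses the Lagrange form of Taylor's theorem for the symmetrized function $F(\tau)=\bu^*(N)-\tfrac12\bigl(\bu^*(N+\tau\sd N)+\bu^*(N-\tau\sd N)\bigr)$ instead of your integral remainder, which is a cosmetic difference). However, as written your argument does not reach the stated constant $\tfrac12$. From the pointwise bound $N_\theta\le 2N$ you correctly get only $-g''(\theta)\ge\tfrac12(\bu_\sd)^2/\bu$, but your final display uses the inequality $-\tfrac12\int_{-1}^1(1-|\theta|)g''(\theta)\,d\theta\ge\tfrac12\cdot\tfrac{(\bu_\sd)^2}{\bu}\int_{-1}^1(1-|\theta|)\,d\theta$, which would require $-g''\ge(\bu_\sd)^2/\bu$; with the bound you actually proved, the computation yields only $\tfrac14(\bu_\sd)^2/\bu$. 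Moreover this loss is intrinsic to your pointwise route: since $\bu_\theta=\bu+\theta\,\sd\bu$ can be nearly $2\bu$ for $\theta$ close to $\pm1$, no estimate $-g''(\theta)\ge c\,(\bu_\sd)^2/\bu$ with $c>\tfrac12$ can hold at a single $\theta$, so feeding a one-$\theta$ bound into the remainder formula cannot produce the constant $\tfrac12$.

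The missing idea is the symmetrization in $\theta$ that the paper exploits: pair $\theta$ with $-\theta$. Since $\bu_\theta+\bu_{-\theta}=2\bu$ and $x\mapsto 1/x$ is convex, Lemma \ref{l:conv-01} gives
\begin{align*}
-g''(\theta)-g''(-\theta)\ \ge\ (\bu_\sd)^2\left(\frac{1}{\bu_\theta}+\frac{1}{\bu_{-\theta}}\right)\ \ge\ \frac{2(\bu_\sd)^2}{\bu},
\end{align*}
and inserting this into your own identity $g(0)-\tfrac{g(1)+g(-1)}{2}=\tfrac12\int_0^1(1-\theta)\bigl(-g''(\theta)-g''(-\theta)\bigr)\,d\theta$ gives exactly $\tfrac12\,(\bu_\sd)^2/\bu$; the paper obtains the same by applying the mean value theorem to $F$ and then the convexity of $1/x$ to $\tfrac12(1/\bu_\theta+1/\bu_{-\theta})\ge 1/\bu$. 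With this correction your proof coincides with the paper's; your more explicit approximation bookkeeping (the paper dismisses it in one line) is fine and the second inequality $\bu_\sd\ge|\sd\bu|$ is trivial as you say. Note that the weaker constant $\tfrac14$ would still suffice for all later qualitative results (it only inflates absolute constants such as the $36$ in Theorem \ref{t:H1_bumps-01}), but it does not prove the corollary as stated.
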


\begin{proof}
It is sufficient to prove this corollary only for compactly supported distribution functions taking finitely many values: the general case is then obtained by approximation. 

So, let $N_1$, $N_2$ be compactly supported distribution functions, taking finitely many values. Introducing function 
\[
F(\tau) = \bu^*(N) - \frac{\bu^*(N+\tau \sd N) + \bu^*(N-\tau \sd N)}2
\] 
and noticing that $F'(0)=0$ we get using Taylor's formula that 
\[
\bu^*(N) - \frac{\bu^*(N_1)+\bu^*(N_2)}2  = F(1) - F(0) = \frac{F''(\theta)}2
\]
for some $\theta\in (0,1)$. Recalling that by Lemma \ref{l:conv-01}
\[
F''(\theta) \ge \frac{(\bu_\sd )^2}{2} \left( \frac{1}{\bu_\theta} + \frac{1}{\bu_{-\theta}}\right)
\]
and noticing that by convexity 
\[
\frac{1}{2} \left( \frac{1}{\bu_\theta} + \frac{1}{\bu_{-\theta}}\right) \ge \frac{1}{\bu}
\]
we get the conclusion. 
\end{proof}

\section{Bumps for the Haar shifts and paraproducts}

For a weight $u$ let $\bu\ci I:= \La u\Ra\ci I = \|u\|\ci{L^1(I)}$, $\bu^*\ci I:= \| u \|\ci{\Lambda_{\psi_0}(I)}$

\begin{thm}
\label{t:H1_bumps-01}
Let $\alpha:[1, \infty)\to \R_+$ be an increasing function such that 
\begin{align*}
C_\alpha:=
\frac{1}{\alpha(1)} + \int_1^\infty \frac1{t\alpha(t)} dt <\infty. 
\end{align*}
Let $u$ be a 
weight. 

Then for any $f\in L^2(u)$
\begin{align}
\label{embed-02}
\sum_{I\in\cD} \frac{\|\Delta\ci I (f u)\|\ci{L^1(I)}^2}{\alpha(\bu^*\ci I/ \bu\ci I )\bu^*\ci I } |I| 
\le 36 C_\alpha\|f\|\ci{L^2(u)}^2, 
\end{align}
\end{thm}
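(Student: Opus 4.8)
The plan is to imitate the Bellman-function argument used for Theorem~\ref{t:embed-03}, but now with the extra concavity of $\bu^*\ci I$ in the children (Corollary~\ref{c:conv-02}) replacing the crude estimate $P(I)\le \bu^*\ci I/\bu\ci I$. First I would set up the standard martingale quantities: for $I\in\cD$ put $x\ci I:=\La fu\Ra\ci I/\La u\Ra\ci I=\bff\ci I/\bu\ci I$, and let $N\ci I:=N_I^u$ be the normalized distribution function of $u$ on $I$, so that $\bu\ci I=\bu(N\ci I)$ and $\bu^*\ci I=\bu^*(N\ci I)$; recall from \eqref{mart-N} that $N\ci{I}$ is the $\mu$-free average of $N\ci{I'}$ over children, where here the averaging weights are $|I'|/|I|$, while $x\ci I$ is the average of $x\ci{I'}$ with weights $\mu(I')/\mu(I)$, $d\mu=u\,dx$. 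The key one-step inequality I want is
\[
\mu(I)\,\cB(x\ci I, N\ci I) - \sum_{I'\in\ch(I)}\mu(I')\,\cB(x\ci{I'}, N\ci{I'}) \ \gtrsim\ \frac{\bff\ci I^2}{\alpha(\bu^*\ci I/\bu\ci I)\,\bu^*\ci I}\,|I|,
\]
where $\cB(x,N)$ is a Bellman function of the scalar $x$ and of the ``infinite-dimensional variable'' $N$ (a decreasing function into $[0,1]$), convex in $x$ and concave along the martingale differences of $N$ in the quantitative sense supplied by Corollary~\ref{c:conv-02}, and bounded by $Cx^2$. Summing this telescoping inequality over the subtree below a fixed $I_0\in\cD_m$ and using $\cB\ge0$, $\cB\le Cx^2$, and the Cauchy--Schwarz bound $\mu(I)x\ci I^2\le\int_I f^2u\,dx$, exactly as in \S\ref{s:Bell_for_embedd-01}, then letting $n\to\infty$ and $m\to-\infty$, gives \eqref{embed-02}.

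The function I would use is $\cB(x,N):=x^2\,m(\bu^*(N)/\bu(N))$ — i.e. the same profile $m$ produced by Lemma~\ref{l:m'} (so $-m'(t)\ge\f(t)=1/(t\alpha(t))$ for $t\ge1$ and $=1/\alpha(1)$ for $t<1$, $m\le 4C_\alpha$, and $m$ satisfies the matrix inequality \eqref{conv-02}) — but now composed with the homogeneous-degree-zero quantity $t\ci I:=\bu^*\ci I/\bu\ci I\ge1$. Two facts make this work. (i) Convexity in $x$ for fixed $N$ is immediate since $m\ge0$; but I actually need joint concavity-type behavior when passing to children, and here the crucial input is that $-m'$ is \emph{convex} (Remark~\ref{r:conv-m'}) and $m$ satisfies \eqref{conv-02}, which is what let the one-dimensional computation in \S\ref{s:constrBell-01} go through; the same computation, applied to the scalar variable $t$ in place of $y$, produces the drop $-\partial_t[x^2 m(t)]\cdot(\text{drop in }t)\ \ge\ x^2\f(t)\cdot(\text{drop in }t\ci I)$ up to controlling the movement of $t\ci I$. (ii) The movement of $t\ci I$ downward: by Corollary~\ref{c:conv-02} applied to $N\ci I=\sum|I'|/|I|\,N\ci{I'}$ (splitting a general child-family into successive halvings, or invoking the general convex-combination form), $\bu^*\ci I\ge\sum_{I'}\frac{|I'|}{|I|}\bu^*\ci{I'}+\tfrac12\frac{(\text{osc of }\bu_\sd)^2}{\bu\ci I}$, while $\bu\ci I=\sum_{I'}\frac{|I'|}{|I|}\bu\ci{I'}$ exactly; combining these shows $t\ci I-\overline{t}\ci I\gtrsim$ a genuine positive amount comparable to $\|\Delta\ci I(u)\|\ci{L^1(I)}^2/\bu\ci I^2\cdot(\ldots)$, and more to the point that $t\ci I\ge\overline t\ci I$ so that $m(t\ci I)\le\overline{m(t)}\ci I$ plus a quantitative gain. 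The delicate bookkeeping is that the numerator we are trying to produce, $\|\Delta\ci I(fu)\|\ci{L^1(I)}^2$, must be matched against the drop; this is where the factor $36=6^2$ will come from, via splitting $\|\Delta\ci I(fu)\|_{L^1(I)}\le \|(f-\La f\Ra\ci I^u)u\|$-type terms and Cauchy--Schwarz, absorbing the $\bu^*\ci I$ versus $\bu\ci I$ discrepancy.

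More precisely, the clean way to organize it: write $g\ci I:=f-x\ci I$ (so $\La g\ci I u\Ra\ci I=0$), note $\Delta\ci I(fu)=\Delta\ci I(g\ci I u)+x\ci I\Delta\ci I u$, hence $\|\Delta\ci I(fu)\|\ci{L^1(I)}\le\|\Delta\ci I(g\ci I u)\|\ci{L^1(I)}+|x\ci I|\,\|\Delta\ci I u\|\ci{L^1(I)}$. The first term is handled by a Cauchy--Schwarz/weighted Carleson embedding of the type already used for $T\ci\cQ$, against the potential built from $\|\Delta\ci I u\|\ci{L^1(I)}$, which is itself Carleson with constant controlled by $\bu^*\ci I$ through \eqref{P_le_U^*/u}-style estimates — in fact $\sum_{I'\subset I}\|\Delta\ci{I'}u\|\ci{L^1(I')}|I'|\lesssim \bu^*\ci I|I|$ by the martingale maximal function bound and \eqref{LlogL-max01}. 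The second term is exactly what the Bellman drop for $x^2m(t)$ digests, since $|x\ci I|^2\|\Delta\ci I u\|\ci{L^1(I)}^2/(\bu\ci I^2|I|)$ is comparable to $x\ci I^2$ times the squared oscillation of $N\ci I$, which by Corollary~\ref{c:conv-02} is $\lesssim \bu\ci I\cdot(\text{drop in }\bu^*\ci I)$, i.e. $\lesssim \bu\ci I^2\cdot(\text{drop in }t\ci I)$. I expect the main obstacle to be precisely this last matching — proving the pointwise one-step inequality with the correct homogeneity, i.e. that the convexity of $\cB$ in $x$ together with the supermartingale gain of $t\ci I$ on passing to children yields a lower bound on the Bellman drop of the exact form $\bff\ci I^2/(\alpha(t\ci I)\bu^*\ci I)\cdot|I|$, without losing a power of $t\ci I$; this requires using both \eqref{conv-02} and the convexity of $-m'$ from Remark~\ref{r:conv-m'}, and carefully tracking that $-m'$ is evaluated at a point $\le t\ci I$ (so that $-m'\ge\f(t\ci I)$ by monotonicity of $t\f(t)$). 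Everything else is the now-routine telescoping/limiting argument of \S\ref{s:Bell_for_embedd-01}.
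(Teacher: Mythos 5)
Your plan reproduces only half of the paper's Bellman function, and the half you keep cannot by itself deliver the key one-step inequality — the very step you flag as ``the main obstacle''. Multiplying your $\cB(x,N)=x^2m(\bu^*(N)/\bu(N))$ by $\mu(I)$ gives exactly $|I|\,\wt\cB_1(\bff\ci I,N_I^u)$ with $\wt\cB_1(\bff,N)=\frac{\bff^2}{\bu(N)}m\bigl(\bu^*(N)/\bu(N)\bigr)$, i.e.\ the first component of the paper's function $\wt\cB=2\wt\cB_1+\wt\cB_2$, $\wt\cB_2(\bff,N)=\bff^2/\bu(N)$. The only quantitative gain that $\wt\cB_1$ produces is the one in Lemma \ref{l:convB-03}, proportional to $\bff^2(\sd\bu)^2/\bigl(\bu^2\alpha(\bu^*/\bu)\bu^*\bigr)$ — it measures the oscillation of $u$, not of $fu$; the remaining convexity of $x^2m(y)$ is only \emph{semi}-definite (for the building blocks $m_r(y)=4/(1+y/r)$ of Lemma \ref{l:m'} one has $m_rm_r''=2(m_r')^2$, so the matrix in \eqref{conv-02} is degenerate). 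Consequently, for children with equal $u$-averages ($\sd\bu=0$) and with $\bff$ and $\bu^*$ oscillating along the null direction of that Hessian, the defect of $\wt\cB_1$ degenerates at second order while $\|\Delta\ci I(fu)\|\ci{L^1(I)}$ does not, so a one-step bound of the form $\gtrsim\|\Delta\ci I(fu)\|\ci{L^1(I)}^2/\bigl(\alpha(\bu^*\ci I/\bu\ci I)\bu^*\ci I\bigr)|I|$ is not obtainable from this function alone. This is precisely what the extra term $\wt\cB_2=\bff^2/\bu$ is for: in Lemma \ref{l:dyBellMainIneq-01} the martingale difference $(\bff_+-\bff)^2$ is extracted from $\wt\cB_2$ by the weighted-Haar/Pythagorean computation, the resulting error term $(\sd\bu)^2\bff^2/\bu^3$ is absorbed by the gain of $2\wt\cB_1$ via Corollary \ref{c:conv-02}, and the normalization $C_\alpha=1$ (so $t\alpha(t)\ge1$) is what allows dividing through by $\alpha(\bu^*/\bu)\bu^*/\bu$. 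In addition, your displayed one-step inequality is oriented the wrong way (parent minus children; since the relevant quantity behaves as a submartingale the correct telescoping inequality is children minus parent, as in \eqref{d-MainIneq}, with the bound coming from the bottom level through $\wt\cB\le 9\bff^2/\bu$), its right-hand side has $\bff\ci I^2$ where the martingale difference must stand, and the passage from two equal halves to arbitrary countable families of children — the paper's Lemma \ref{l:2-to-n}, which costs the factor $4$ via an $\ell^1$--$\ell^\infty$ duality argument, followed by the truncation argument for infinitely many children — is not addressed.

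Your fallback bookkeeping in the last paragraph also rests on a false estimate: $\sum_{I'\subset I}\|\Delta\ci{I'}u\|\ci{L^1(I')}|I'|\lesssim\bu^*\ci I|I|$ does not hold. The left-hand side is $\int_I\sum_{I'\subset I}|\Delta\ci{I'}u|\,dx$, the $L^1$ norm of the martingale \emph{total variation}, which is not controlled by $\|M(\1\ci Iu)\|\ci{L^1(I)}\approx\|u\|\ci{L\log L(I)}$: for the Rademacher-type product $u=\prod_{k=1}^n(1+n^{-1/2}r_k)$ on $I$ (with $r_k$ the level-$k$ Haar signs) one has $\sum_{I'\subset I}\int_{I'}|\Delta\ci{I'}u|\,dx\asymp n^{1/2}|I|$, while $\|u\|\ci{L^2(I)}\le C$, hence $\bu^*\ci I\le C$. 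Only the square function, not the $\ell^1$ sum of martingale differences, is dominated by the $L\log L$ norm, so that route collapses as well. The essential missing idea, compared with the paper, is the enlargement of the Bellman function by $\bff^2/\bu(N)$ together with the Pythagorean argument of Lemma \ref{l:dyBellMainIneq-01}.
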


Using this theorem and Cauchy--Schwarz we get Theorem \ref{t:main-01} for Haar shifts with $C=(36 A^{1/2} C_\alpha)^2$, where $A$ is the supremum in \eqref{entr-bump-01}. 

We will prove  Theorem \ref{t:H1_bumps-01} using the Bellman function method. By the homogeneity we can assume without loss of generality that $C_\alpha=1$.

\subsection{The Bellman function and the main dyadic inequality}
Let $\cB=\cB_\alpha$ be the function constructed in Section \ref{s:Bell_for_embedd-01} above. 



Define function $\wt \cB:\R\times \cN\to [0,\infty)$ (recall that $\cN$ is the set of all compactly supported distribution functions taking finitely many values) by 
\begin{align*}
\wt\cB(\bff, N) 
=2\bu(N)\cB\left(\frac{\bff}{\bu(N)}, \frac{\bu^*(N)}{\bu(N)}\right)
+ \frac{\bff^2}{\bu(N)} = :
2\wt\cB_1(\bff, N) + \wt\cB_2(\bff, N). 
\end{align*}
It follows from property \cond3 of $\cB$ that 
\begin{align}
\label{bound_B}
0\le \wt\cB(\bff, N) \le 9 \frac{\bff^2}{\bu(N)}. 
\end{align}
(recall that we assumed that $C_\alpha=1$).

Recalling that $\cB(x,y) = x^2 m(y)$, where $m$ is the function obtained by applying Lemma \ref{l:m'} to the function  $\f$
\[
\f(t) =
\left\{
\begin{array}{ll} 1/(t\alpha(t)), \qquad & t\ge 1, \\
1/\alpha(1), & 0<t\le 1 \,, 
\end{array}\right.
\]
we can write $\wt\cB_1(\bff, N) = \cB_1(\bff, \bu(N), \bu^*(N))$, where $\cB_1$ is a function of 3 scalar arguments, 
\begin{align}
\label{cB_1}
\cB_1(\bff, \bu, \bu^*) = \frac{\bff^2}{\bu} m (\bu^*/\bu). 
\end{align}

%

\begin{lm}
\label{l:dyBellMainIneq-01}
Let 
\begin{align*}
\bff = \frac{\bff_+ + \bff_-}{2}, \qquad N(t) = \frac{N_+(t) + N_-(t)}{2}.
\end{align*}
Then the fuction $\wt\cB$ introduced above satisfies
\begin{align}
\label{d-MainIneq}
\frac12 \Bigl( \wt\cB(\bff_+, N_+)   + \wt\cB(\bff_-, N_-) \Bigr) - \wt\cB(\bff, N) 
\ge \frac{1}{2}   \cdot  \frac{ (\bff_+ -\bff)^2}{\alpha(\bu^*/\bu ) \bu^*},  
\end{align}
where $\bu=\bu(N)$, $\bu^*=\bu^*(N)$.
 (Note that $\bff_+ -\bff = \bff-\bff_-$, so we can replace $(\bff_+ -\bff)^2$ in the right side by $(\bff_- -\bff)^2$)
\end{lm}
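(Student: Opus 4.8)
The plan is to prove \eqref{d-MainIneq} by splitting $\wt\cB = 2\wt\cB_1 + \wt\cB_2$ and handling the two pieces separately, using the convexity/concavity facts already established. For the $\wt\cB_2$ part, $\wt\cB_2(\bff,N) = \bff^2/\bu(N)$, note that $(\bff,\bu)\mapsto \bff^2/\bu$ is jointly convex on $\R\times\R_+$ (its Hessian is positive semidefinite — this is the classical Bellman function for the dyadic square function), and since $\bu(N)$ is linear in $N$, the map $(\bff,N)\mapsto\bff^2/\bu(N)$ is convex along the segment. Hence $\tfrac12(\wt\cB_2(\bff_+,N_+)+\wt\cB_2(\bff_-,N_-)) - \wt\cB_2(\bff,N)\ge 0$, so this term only helps (or we can extract the sharp square-function gain from it if needed, but nonnegativity suffices here).

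The real work is the $\wt\cB_1$ term, $\wt\cB_1(\bff,N) = \bu(N)\,\cB(\bff/\bu(N), \bu^*(N)/\bu(N)) = \tfrac{\bff^2}{\bu(N)} m(\bu^*(N)/\bu(N))$. First I would pass to averages: write $\bff = \overline{\bff}$ where the average over the two ``children'' is taken with weight $\mu$ (i.e. with weights $\bu(N_\pm)$ after the usual renormalization), so that the $x$-variable $\bff/\bu$ is a martingale, exactly as in the embedding-theorem argument of Section~\ref{s:Bell_for_embedd-01}. The key point is that $\cB(x,y)=x^2 m(y)$ is convex (property \cond1, i.e. \eqref{conv-02}), so Jensen in the $\mu$-weighted average over children gives $\tfrac12(\wt\cB_1(\bff_+,N_+)+\wt\cB_1(\bff_-,N_-)) \ge \wt\cB(x,\overline{y})\cdot(\text{mass}) $ where $\overline y$ is the $\mu$-weighted average of $\bu^*(N_\pm)/\bu(N_\pm)$; then since $\overline x = x = \bff/\bu$, we reduce to estimating $\wt\cB_1$ evaluated at $(x,\overline y)$ minus $\wt\cB_1$ at $(x,y)$ with $y=\bu^*(N)/\bu(N)$. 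By property \cond2, $-\p_y\cB(x,y)\ge x^2\f(y)$ with $\f$ decreasing, so a mean value theorem in $y$ gives a gain of $x^2\f(\xi)(y-\overline y)$ for some $\xi$ between $\overline y$ and $y$; one then needs $y - \overline y \ge 0$, i.e. that $N\mapsto \bu^*(N)/\bu(N)$ behaves supermartingale-wise relative to the $\mu$-weighted children average — and this is precisely where Corollary~\ref{c:conv-02} (the concavity of $\bu^*$, quantified by $(\sd\bu)^2/\bu$) enters. Combining, $y-\overline y$ is controlled below by a multiple of $(\sd\bu/\bu)^2$-type quantity, and since $\f$ is decreasing we may replace $\f(\xi)$ by $\f(y) = 1/(y\alpha(y)) = \bu/(\bu^*\alpha(\bu^*/\bu))$; tracking constants and the factor $x^2 = (\bff/\bu)^2$ together with $(\bff_+-\bff) = \overline x$-deviation then yields the right-hand side $\tfrac12 (\bff_+-\bff)^2/(\alpha(\bu^*/\bu)\bu^*)$.

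The main obstacle I anticipate is bookkeeping the interaction between two distinct ``averagings'': the plain dyadic average $N=(N_++N_-)/2$ appearing in the statement, versus the $\mu$-weighted (i.e. $u\,dx$-weighted) average over children that makes the $x$-variable a martingale and that appears in Corollary~\ref{c:conv-02}. One has to be careful that $\bu(N) = (\bu(N_+)+\bu(N_-))/2$ since $\bu$ is linear, so the two children do \emph{not} generally have equal $\mu$-mass, and the convexity/Jensen step must use the correct weights $\bu(N_\pm)/(2\bu(N))$; similarly Corollary~\ref{c:conv-02} as stated is for the unweighted average $N=(N_1+N_2)/2$, giving a bound in terms of $(\sd\bu)^2/\bu$ with $\sd\bu = \bu(N_1)-\bu(N)$, which is exactly $\pm\tfrac12(\bu(N_+)-\bu(N_-))$ — so it needs to be matched against the deviation of the $x$-variable. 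A secondary technical point is the non-smoothness: $m$ is only $C^1$ and $\cB$ need not be $C^2$ (indeed $m''$ may be $+\infty$), so the ``mean value theorem in $y$'' must be justified via convexity/one-sided derivatives rather than Taylor's theorem, and the differentiation of $\bu^*(N_\theta)$ is legitimate only because of the finitely-many-values hypothesis on $\cN$, with the general case following by the approximation already used in the proof of Corollary~\ref{c:conv-02}.
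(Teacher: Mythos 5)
There is a genuine gap, and it sits exactly at the step ``tracking constants \dots{} then yields the right-hand side'': you have the roles of the two pieces of $\wt\cB$ reversed. Your treatment of $\wt\cB_1$ (weighted Jensen with the correct weights $\bu_\pm/(2\bu)$, then the mean value estimate in $y$ combined with Corollary \ref{c:conv-02}) is correct, but what it produces is a gain of order $(\sd\bu)^2\bff^2/\bigl(\bu^2\alpha(\bu^*/\bu)\bu^*\bigr)$ --- this is precisely Lemma \ref{l:convB-03} --- and this quantity measures the oscillation of the \emph{weight}, not of $\bff$; it cannot be converted into $(\bff_+-\bff)^2/\bigl(2\alpha(\bu^*/\bu)\bu^*\bigr)$. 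Test your argument on $N_+=N_-$ (so $\sd\bu=0$ and $\bu^*_0=\bu^*$) with $\bff_+\ne\bff_-$: the $y$-shift you rely on vanishes identically, and since you used only nonnegativity for the $\wt\cB_2$ term, nothing in your estimate produces the strictly positive right-hand side of \eqref{d-MainIneq}. (In this degenerate case plain convexity of $\cB$ in $x$ would give $2m(y)(\bff_+-\bff)^2/\bu$, but none of the properties (i)--(iii) of $\cB$ gives a lower bound of the form $m(y)\gtrsim 1/(y\alpha(y))$ --- (iii) is an upper bound --- so that route is not available from the ingredients you invoke.)

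The missing idea is that the $(\bff_+-\bff)^2$ gain must be extracted \emph{quantitatively} from $\wt\cB_2(\bff,N)=\bff^2/\bu$. Since $\bff$ and $\bu$ are plain averages, $\bff/\bu$ is the $u\,dx$-weighted average of $\bff_\pm/\bu_\pm$, and the Pythagorean theorem in $L^2(\mu)$ with the weighted Haar function (the paper's route to \eqref{mainDyPrelim}) gives
\begin{align*}
\frac12\Bigl(\frac{\bff_+^2}{\bu_+}+\frac{\bff_-^2}{\bu_-}\Bigr)-\frac{\bff^2}{\bu}
\ \ge\ \frac{(\bff_+-\bff)^2}{2\bu}-\frac{(\sd\bu)^2\bff^2}{\bu^3},
\end{align*}
i.e.\ the desired main term appears, but together with an unavoidable error $(\sd\bu)^2\bff^2/\bu^3$ coming from the mismatch between the unweighted and the $\mu$-weighted averagings. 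After dividing by $\alpha(\bu^*/\bu)\bu^*/\bu\ge1$ (this is where the normalization $t\alpha(t)\ge1$ enters), that error becomes exactly the quantity your $\wt\cB_1$ computation controls, and it is cancelled by the $\wt\cB_1$ gain counted twice --- this is the reason for the factor $2$ in $\wt\cB=2\wt\cB_1+\wt\cB_2$. So all your ingredients occur in the correct proof, but with the opposite division of labor: $\wt\cB_2$ carries the main term $(\bff_+-\bff)^2$, while $\wt\cB_1$ only supplies the compensating correction proportional to $(\sd\bu)^2\bff^2$; as written, your proposal discards the former and asks the latter to do a job it cannot do.
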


To prove the above lemma we need the following fact. 

\begin{lm}
\label{l:convB-03}
Let again  $\bu=\bu(N)$, $\bu^*=\bu^*(N)$, and let 
\[
\sd\bu := \int_0^\infty \sd N(t) dt
\]
If $N_{\pm} = N\pm\sd N$, $\bff=(\bff_++\bff_-)/2$ then
\[
\frac{\wt\cB_1(\bff_+, N_+) + \wt\cB_1(\bff_-, N_-)}{2} - \wt\cB_1(\bff, N)\ge 
\frac12 \cdot \frac{(\sd\bu)^2 \bff^2}{\bu^2\alpha(\bu^*/\bu)\bu^*}
. 
\]
\end{lm}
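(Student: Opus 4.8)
The plan is to exploit the fact that $\wt\cB_1(\bff,N)$ depends on $N$ only through the two scalars $\bu(N)$ and $\bu^*(N)$, so that $\wt\cB_1(\bff,N)=\cB_1\big(\bff,\bu(N),\bu^*(N)\big)$ with
\[
\cB_1(\bff,\bu,\bu^*)=\frac{\bff^2}{\bu}\,m\!\left(\frac{\bu^*}{\bu}\right)=\bu\,\cB\!\left(\frac{\bff}{\bu},\frac{\bu^*}{\bu}\right).
\]
Since $\cB$ is convex (property \cond1), $\cB_1$ is its perspective transform in the variables $(\bff,\bu^*)$ with respect to $\bu$, hence jointly convex on $\{\bu>0,\ \bu^*\ge0\}$. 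We may assume $\bu(N_\pm)>0$, the degenerate cases being handled by the same limiting argument used in Corollary~\ref{c:conv-02}.

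First I would record the averages. Put $\bu_\pm:=\bu(N_\pm)$ and $\bu^*_\pm:=\bu^*(N_\pm)$. Since $N\mapsto\bu(N)$ is linear and $N=(N_++N_-)/2$, we get $\bu_\pm=\bu\pm\sd\bu$, so $\bu=(\bu_++\bu_-)/2$; also $\bff=(\bff_++\bff_-)/2$ by hypothesis. Setting $\bar{\bu}^*:=(\bu^*_++\bu^*_-)/2$ and applying joint convexity of $\cB_1$ at these midpoints,
\[
\frac{\wt\cB_1(\bff_+,N_+)+\wt\cB_1(\bff_-,N_-)}{2}
=\frac{\cB_1(\bff_+,\bu_+,\bu^*_+)+\cB_1(\bff_-,\bu_-,\bu^*_-)}{2}
\ \ge\ \cB_1(\bff,\bu,\bar{\bu}^*).
\]
By Corollary~\ref{c:conv-02} applied to $N=(N_++N_-)/2$ one has $\bu^*-\bar{\bu}^*\ge\tfrac12(\sd\bu)^2/\bu$; in particular $\bar{\bu}^*\le\bu^*$.

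It then remains to bound below $\cB_1(\bff,\bu,\bar{\bu}^*)-\cB_1(\bff,\bu,\bu^*)=\tfrac{\bff^2}{\bu}\big(m(\bar{\bu}^*/\bu)-m(\bu^*/\bu)\big)$. Write $a:=\bar{\bu}^*/\bu\le b:=\bu^*/\bu$, and note $b\ge 1$ since $\bu^*=\int_0^\infty\psi_0(N(t))\,dt\ge\int_0^\infty N(t)\,dt=\bu$ because $\psi_0(s)=s\ln(e/s)\ge s$ on $[0,1]$. Using property \cond2 of $\cB$, i.e.\ $-m'(s)\ge\f(s)$, and that $\f$ is non-increasing on $(0,\infty)$ with $\f(t)=1/(t\alpha(t))$ for $t\ge1$ (recall $t\mapsto t\alpha(t)$ is increasing),
\[
m(a)-m(b)=\int_a^b\big(-m'(s)\big)\,ds\ \ge\ \int_a^b\f(s)\,ds\ \ge\ (b-a)\,\f(b)
=\frac{\bu^*-\bar{\bu}^*}{\bu}\cdot\frac{1}{(\bu^*/\bu)\,\alpha(\bu^*/\bu)}.
\]
Multiplying by $\bff^2/\bu$ and then using $\bu^*-\bar{\bu}^*\ge\tfrac12(\sd\bu)^2/\bu$,
\[
\cB_1(\bff,\bu,\bar{\bu}^*)-\cB_1(\bff,\bu,\bu^*)\ \ge\ \frac{\bff^2}{\bu}\cdot\frac{\bu^*-\bar{\bu}^*}{\bu^*\,\alpha(\bu^*/\bu)}\ \ge\ \frac12\cdot\frac{(\sd\bu)^2\bff^2}{\bu^2\,\alpha(\bu^*/\bu)\,\bu^*},
\]
and combining with the convexity inequality above (and $\wt\cB_1(\bff,N)=\cB_1(\bff,\bu,\bu^*)$) yields the lemma.

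The only real work is the bookkeeping that keeps apart the two sources of gain: the strict concavity of $N\mapsto\bu^*(N)$ along the segment from $N_-$ to $N_+$, quantified by Corollary~\ref{c:conv-02}, and the strict decrease of $\cB_1$ in the $\bu^*$-slot, quantified by $-m'\ge\f$; the observation that $\cB_1$ is a perspective function is what lets both be chained through a single midpoint inequality. One should also check that the point $b=\bu^*/\bu$ at which $\f$ is evaluated lies in $[1,\infty)$ — precisely the elementary inequality $\bu^*\ge\bu$ — but no genuinely hard step is expected here.
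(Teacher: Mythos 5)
Your proof is correct and follows essentially the same route as the paper: both split the gain into a convexity step at the averaged value $\bar{\bu}^*=(\bu^*_++\bu^*_-)/2$ (your perspective-function argument is exactly the paper's convex combination with weights $\bu_\pm/(2\bu)$) and a monotonicity-in-$\bu^*$ step quantified by Corollary~\ref{c:conv-02} together with $-m'\ge\f$. The only cosmetic difference is that you integrate $-m'\ge\f$ over $[a,b]$ where the paper uses the mean value theorem plus the monotonicity of $-m'$; your explicit check that $\bu^*/\bu\ge1$ is a nice touch that the paper leaves implicit.
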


\begin{proof}
Denote 
\[
\bu_{\pm} = \bu(N \pm \sd N), \qquad \bu^*_{\pm} = \bu^*(N \pm\ \sd N), 
\]
 and let 
 \[
 \bu^*_0 := (\bu^*_+ +\bu^*_{-})/2, \qquad \bu_0 = (\bu_+ +\bu_{-})/2;
\]
note that $\bu_0=\bu$, but generally we can only say that $\bu^*_0\le \bu^*$. Note also that $\bu_{\pm} = \bu\pm \sd \bu$.

Recall that $\wt\cB_1(\bff, N) = \cB_1(\bff, \bu, \bu^*)$ (see \eqref{cB_1}), so 
\begin{align}
\label{conv-wtcB_1}
\wt\cB_1(\bff_+, N_+) &+ \wt\cB_1(\bff_{-}, N_{-}) - 2 \wt\cB_1(\bff, N) 
\\ \label{conv-wtcB_1-02} &= 
\wt\cB_1(\bff_+, N_+) + \wt\cB_1(\bff_{-}, N_{-}) -2 \cB_1(\bff, \bu, \bu^*_0) 
\\ \label{conv-wtcB_1-03} & \qquad \qquad 
+ 2 \left( \cB_1(\bff, \bu, \bu^*_0) - \cB_1(\bff, \bu, \bu^*) \right)
\end{align}

Denoting $\sd^2 \bu^*:=\bu^*_0-\bu^*$ we can estimate  
estimate the term \eqref{conv-wtcB_1-03} by applying mean value theorem to \eqref{cB_1}:
\[
\cB_1(\bff, \bu, \bu^*_0) - \cB_1(\bff, \bu, \bu^*) = \frac{\bff^2}{\bu} m'\left(\frac{\bu^* +\theta\sd^2\bu^*}{\bu}\right) \frac{\sd^2\bu^*}{\bu}\,,
\]
where $0<\theta<1$. By Corollary \ref{c:conv-02} 
\[
-\sd^2 \bu^* \ge \frac12\cdot \frac{(\sd\bu)^2}{\bu}, 
\]
so
\begin{align}
\notag
\cB_1(\bff, \bu, \bu^*_0) - \cB_1(\bff, \bu, \bu^*) &\ge - \frac12 \cdot \frac{(\sd\bu)^2 \bff^2}{\bu^3} m'\left(\frac{\bu^* +\theta\sd^2\bu^*}{\bu}\right) 
\\
\notag
& \ge 
- \frac12 \cdot \frac{(\sd\bu)^2 \bff^2}{\bu^3} m'\left(\frac{\bu^*}{\bu}\right) \,;
\end{align}
the last inequality holds because $-m'$ is decreasing ($m$ is convex) and $\sd^2\bu^*\le 0$ 
(recall also that $m'<0$). Recalling that $-m'(t) \ge 1/(t\alpha(t))$ we get from there
\begin{align}
\label{deltaB_1}
\cB_1(\bff, \bu, \bu^*_0) - \cB_1(\bff, \bu, \bu^*) \ge \frac12 \cdot \frac{(\sd\bu)^2 \bff^2}{\bu^2\alpha(\bu^*/\bu)\bu^*}
\end{align}

Next, we want to show that the term \eqref{conv-wtcB_1-02} is non-negative. To do that we use the convexity of the function $\cB(x, y) = x^2 m(y)$, defined in Section \ref{s:Bell_for_embedd-01}.

Denoting
\begin{align*}
x & = \bff/\bu, \qquad & y & =\bu^*_0/\bu \\
x_{\pm} & = \bff_{\pm}/\bu_{\pm} & y_{\pm} & = \bu^*_{\pm}/\bu_{\pm}
\end{align*}
we can write \eqref{conv-wtcB_1-02} as
\begin{align}
\label{diff2-cB^1}
\bu_{+} \cB^1(x_{+}, y_{+} ) + \bu_{- } \cB^1(x_{-}, y_{-} ) -2\bu \cB^1(x,y)
\end{align}
Since $\bu = (\bu_+ + \bu_{-})/2$, we have
\[
x = \frac{\bu_{+}}{2\bu} x_+ + \frac{\bu_{-}}{2\bu} x_{-} , \qquad
y = \frac{\bu_{+}}{2\bu} y_+ + \frac{\bu_{-}}{2\bu} y_{-}, 
\]
so the convexity of $\cB^1$ implies that 
\[
\frac{\bu_{+}}{2\bu}  \cB^1(x_{+}, y_{+} )  + \frac{\bu_{-}}{2\bu}  \cB^1(x_{-}, y_{-} ) - 
\cB^1(x,y) \ge 0.
\]
But this means that \eqref{diff2-cB^1}, and so \eqref{conv-wtcB_1-02} are non-negative. 

Combining the estimate \eqref{deltaB_1} for \eqref{conv-wtcB_1-03} with the non-negativity of  \eqref{conv-wtcB_1-02}   we get the conclusions of the lemma. 
%
\end{proof}

\begin{proof}[Proof of Lemma \ref{l:dyBellMainIneq-01}]
The inequality probably can be verified by an elementary algebra, but the following geometric proof seems to be more illuminating. 

Denote $\bu_\pm :=\bu (N_\pm)$. 

Let $I=[0,1)$ be the unit interval, and let $I_+$ and $I_-$ be its right and left halves respectively. Let $\mu$ be a measure on $I$ such that $\mu(I_+)=\bu_+/2$, $\mu(I_-)=\bu_-/2$, so $\mu(I) = \bu$. Let $f$ be a function on $I$, 
\[
f := \frac{\bff_+}{\bu_+} \1\ci{I_+}+ \frac{\bff_-}{\bu_-}\1\ci{I_-} \,.
\]
Then 
\[
\mu(I)^{-1} \int_I f d\mu = \frac{\bff}{\bu}
\]
and the function $\frac{\bff}{\bu}\1\ci I$ is the orthogonal projection in $L^2(\mu)$ of $f$ onto constants. 
Then we get using the Pythagorean theorem  than
\begin{align}
\label{Pyth-01}
\frac12 \left(\frac{\bff_+^2}{\bu_+} + \frac{\bff_-^2}{\bu_-} \right) - \frac{\bff^2}{\bu}
=\| f\|\ci{L^2(\mu)}^2 - \left\| \frac{\bff}{\bu}\1\ci I \right\|_{L^2(\mu)}^2  = \left|(f, h^\mu)\ci{L^2(\mu)} \right|^2
\end{align}
where $h^\mu=h\ci I^\mu$ is and $L^2(\mu)$ Haar function of $I$, i.e a function which is constant on intervals $I_\pm$, normalized by $\|h^\mu\|\ci{L^2(\mu)}=1$ and is orthogonal to constants, 
\[
\int_I h^\mu d\mu=0    
\]
(clearly, such a function is unique up to a constant unimodular factor). 

Let $h =h\ci I:= \1\ci{I_+} - \1\ci{I_-}$ be the non-weighted Haar function. Then
\[
(f, h)\ci{L^2} =\frac12\left(\bff_+ - \bff_-\right) = (\bff_+- \bff) =(\bff -\bff_-). 
\]
Let $\wt h^\mu := h - a_\mu \1\ci I$, where 
\[
a_\mu = \frac{\bu_+ - \bu}{\bu} =\frac{\sd \bu}{\bu}
\]
is the unique constant which makes $\wt h^\mu$ orthogonal to constants in $L^2(\mu)$, i.e.~such that $\int_I \wt h^\mu d\mu=0$. Note that $\wt h^\mu$ is a constant multiple of the normalized $L^2(\mu)$-Haar function $h^\mu$. 

Direct computations show that 
\[
\| \wt h^\mu\|\ci{L^2(\mu)}^2 = \frac{\bu_+ \bu_-}{\bu^2} \bu \le \bu = \bu\|h^\mu\|\ci{L^2(\mu)}^2. 
\]
so
\begin{align}
\label{wth-01}
\left| (f, \wt h^\mu)\ci{L^2(\mu)}\right| \le \bu^{1/2} \left| (f,  h^\mu)\ci{L^2(\mu)}\right|
\end{align}
($\wt h^\mu$ is a constant multiple of $h^\mu$). We can write 
\begin{align*}
|\bff_+ - \bff| = \left| (f, h)\ci{L^2(\mu)} \right| = \left| (f, \wt h^\mu + a_\mu \1\ci I)\ci{L^2(\mu)} \right| 
\le \left| (f, \wt h^\mu)\ci{L^2(\mu)} \right| + |a_\mu| \bff. 
\end{align*}
Using the inequality $(a+b)^2 \le 2(a^2+b^2)$ and taking \eqref{wth-01} and the fact that $|a_\mu|\le 1$ into account we get
\[
\frac{|\bff_+ - \bff|^2}{2\bu} \le \left| (f,  h^\mu)\ci{L^2(\mu)} \right|^2  + \frac{\bff^2(\sd\bu)^2}{\bu^3}  \,.
\]
Recalling \eqref{Pyth-01} we get from here
\begin{align}
\label{mainDyPrelim}
\frac{|\bff_+ - \bff|^2}{2\bu} \le   \frac12 \left( 
\frac{\bff_+^2}{\bu_+} + \frac{\bff_-^2}{\bu_-} \right) - \frac{\bff^2}{\bu}  + \frac{(\sd\bu)^2 \bff^2}{\bu^3}  \,.
\end{align}
The assumption  $C_\alpha=1$ implies that $t\alpha(t) \ge 1$ for $t\ge 1$, so dividing left hand side and the last term in the right hand side of \eqref{mainDyPrelim} by $\alpha(\bu^*/\bu)\bu^*/\bu$ we get
\begin{align}
\label{mainDyPrelim-01}
\frac{|\bff_+ - \bff|^2}{2\alpha(\bu^*/\bu)\bu} \le   \frac12 \left( 
\frac{\bff_+^2}{\bu_+} + \frac{\bff_-^2}{\bu_-} \right) - \frac{\bff^2}{\bu}  + \frac{(\sd\bu)^2 \bff^2}{\bu^2 \alpha(\bu^*/\bu)\bu^*}  \,.
\end{align}

By the definition of $\wt\cB_2$ 
\[
\frac12 \left(\wt \cB_2(\bff_+, N)  \wt \cB_2(\bff_-, N_-) \right)-  \wt \cB_2(\bff, N)
= \frac12 \left( 
\frac{\bff_+^2}{\bu_+} + \frac{\bff_-^2}{\bu_-} \right) - \frac{\bff^2}{\bu}\,.
\]
By Lemma \ref{l:convB-03} 
\[
\frac{\wt\cB_1(\bff_+, N_+) + \wt\cB_1(\bff_-, N_-)}{2} - \wt\cB_1(\bff, N)\ge 
\frac12 \cdot \frac{(\sd\bu)^2 \bff^2}{\bu^2\alpha(\bu^*/\bu)\bu^*}
. 
\]
Recalling that $\wt\cB =2 \wt\cB_1 + \wt\cB_2$ we then can see that \eqref{mainDyPrelim-01} gives exactly the conclusion of the lemma. 
\end{proof}

\subsection{The main difference inequality for the Bellman function}

%
%

\subsubsection{General form}

Let $\f$ and $\wt\cB$ be as above. 
\begin{lm}
\label{l:BellMainIneq-01}
Let $\bff, \bff_k \in\R$, $\gamma_k\in \R_+$ and the distribution functions $N$, $N_k$, $k=1, 2, \ldots, n$ satisfy
\[
\bff = \sum_{k=1}^n  \gamma_k \bff_k, \qquad N = \sum_{k=1}^n  \gamma_k N_k,  \qquad 
\sum_{k=1}^n \gamma_k =1.  \ 
\]
Then the function $\wt\cB$ introduced above satisfies
\begin{align}
\label{eq:maindiff_02}
-\wt\cB(\bff, N) + \sum_{k=1}^n \gamma_k \wt\cB(\bff, N_k) \ge \frac{1}{4}\cdot \frac{1}{\alpha(\bu^*/\bu)\bu^*} \left( \sum_{k=1}^n \gamma_k | \bff_k -\bff |  \right)^2 
\end{align}
\end{lm}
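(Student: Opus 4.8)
The plan is to deduce the $n$-term inequality \eqref{eq:maindiff_02} from the two-term inequality \eqref{d-MainIneq} of Lemma \ref{l:dyBellMainIneq-01} by a convexity/iteration argument, much in the spirit of how a martingale estimate on dyadic splittings implies the general convex-combination estimate. First I would dispose of the ``loss term'': the key observation is that in \eqref{d-MainIneq} the right side involves $\alpha(\bu^*/\bu)\bu^*$ computed at the \emph{parent} node $N$, and since $t\mapsto t\alpha(t)$ is increasing while the passage $N\mapsto N_k$ can only increase $\bu^*(N)/\bu(N)$ in the relevant averaged sense (this is exactly the content of Corollary \ref{c:conv-02}, which says $\bu^*$ has concave/supermartingale behavior), the quantity $1/(\alpha(\bu^*/\bu)\bu^*)$ at the parent dominates the corresponding quantities at the children. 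This means it suffices to prove the ``balance'' inequality with the fixed weight $c:=1/(4\alpha(\bu^*/\bu)\bu^*)$ sitting out front and the convexity defect of $\wt\cB$ controlling $\big(\sum_k\gamma_k|\bff_k-\bff|\big)^2$.

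The second step is the reduction from $n$ terms to two terms. I would proceed by induction on $n$, splitting $\sum_{k=1}^n\gamma_k N_k$ as a two-term convex combination $\gamma_1 N_1 + (1-\gamma_1)\big(\sum_{k\ge 2}\tfrac{\gamma_k}{1-\gamma_1}N_k\big)$, and similarly for $\bff$; write $\bff':=\sum_{k\ge2}\tfrac{\gamma_k}{1-\gamma_1}\bff_k$ and $N':=\sum_{k\ge2}\tfrac{\gamma_k}{1-\gamma_1}N_k$. Apply the two-term Lemma \ref{l:dyBellMainIneq-01} to the pair $(\bff_1,N_1)$, $(\bff',N')$ to get
\[
\gamma_1\wt\cB(\bff_1,N_1) + (1-\gamma_1)\wt\cB(\bff',N') - \wt\cB(\bff,N) \ge \frac{\gamma_1(1-\gamma_1)}{2\alpha(\bu^*/\bu)\bu^*}\,|\bff_1-\bff|^2,
\]
using that $\bff_1-\bff = \gamma_1^{-1}\cdot\gamma_1(\bff_1-\bff)$ and that $(1-\gamma_1)(\bff'-\bff)=-\gamma_1(\bff_1-\bff)$ so the defect is $\gamma_1(1-\gamma_1)|\bff_1-\bff|^2$ up to the factor. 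Then apply the inductive hypothesis to the $(n-1)$-term combination $(\bff',N')$, again transferring the loss-term weight from the node $N'$ up to $N$ via the monotonicity of $t\alpha(t)$ and Corollary \ref{c:conv-02}, and add the two inequalities with weight $(1-\gamma_1)$ on the second.

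The last step is to recombine the resulting quadratic lower bound $\sum_k\gamma_k|\bff_k-\bff|^2$-type expressions into $\big(\sum_k\gamma_k|\bff_k-\bff|\big)^2$. Here I expect the numerology to work out exactly because of the identity $\sum_{k}\gamma_k|\bff_k-\bff|^2 = \big(\sum_k\gamma_k|\bff_k-\bff|\big)^2 + \sum_k\gamma_k\big(|\bff_k-\bff|-\sum_j\gamma_j|\bff_j-\bff|\big)^2 \ge \big(\sum_k\gamma_k|\bff_k-\bff|\big)^2$ (Cauchy--Schwarz / Jensen), so in fact the weaker $\ell^1$-type right-hand side in \eqref{eq:maindiff_02} is what survives, and the constant $1/4$ is exactly what the two-term $1/2$ gives after the induction bookkeeping. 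The main obstacle I anticipate is the rigorous handling of the loss term: one must be careful that the defect inequality at each stage is stated with $1/(\alpha(\bu^*/\bu)\bu^*)$ evaluated at the \emph{same} parent node $N$ throughout, and check that replacing the child-node values $1/(\alpha(\bu^*(N_k)/\bu(N_k))\bu^*(N_k))$ (or $N'$) by the parent-node value only decreases the right side — this is where the concavity of $\bu^*$ from Section \ref{s:ConvOf_n} and the monotonicity of $t\mapsto t\alpha(t)$ are both genuinely needed, and it is easy to get the direction of an inequality wrong.
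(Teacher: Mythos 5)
Your plan has a genuine gap, and it sits exactly where you flagged the danger: the transfer of the coefficient $1/\bigl(\alpha(\bu^*/\bu)\bu^*\bigr)$ between nodes. In the inductive step you apply the hypothesis at the intermediate point $(\bff', N')$, which produces a gain with the constant $1/\bigl(\alpha(\bu^*(N')/\bu(N'))\,\bu^*(N')\bigr)$, and to conclude you must replace it by the parent constant $1/\bigl(\alpha(\bu^*(N)/\bu(N))\,\bu^*(N)\bigr)$; this requires $\alpha\bigl(\bu^*(N')/\bu(N')\bigr)\bu^*(N') \le \alpha\bigl(\bu^*(N)/\bu(N)\bigr)\bu^*(N)$. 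Neither Corollary \ref{c:conv-02} nor the monotonicity of $t\mapsto t\alpha(t)$ gives this: concavity only yields $\bu^*(N)\ge \sum_k\gamma_k\bu^*(N_k)$, which does not dominate an individual child, and the monotonicity acts on the ratio $\bu^*/\bu$, not on the product $\alpha(\bu^*/\bu)\bu^*$. In fact the comparison can fail: take $N_1$ (nearly) zero and $\gamma_1\to 1$, so $N=(1-\gamma_1)N'$; then $\bu^*(N)=(1-\gamma_1)\bigl[\bu^*(N')+\bu(N')\ln\tfrac{1}{1-\gamma_1}\bigr]\ll \bu^*(N')$, while the ratio at the parent is \emph{larger} than at $N'$, and for a slowly varying $\alpha$ the product at the parent is strictly smaller than at $N'$ --- the opposite of what your transfer needs. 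A second, lesser, gap is that Lemma \ref{l:dyBellMainIneq-01} is a midpoint (equal-weights) statement, whereas your first step invokes it with weights $\gamma_1$, $1-\gamma_1$ and gain $\gamma_1(1-\gamma_1)|\bff_1-\bff|^2$; that weighted version is not available and deriving it runs into the same problem of the coefficient being attached to a moving center.

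The paper's proof avoids both issues by never leaving the original center. In Lemma \ref{l:2-to-n} one chooses numbers $\beta_k$, $|\beta_k|\le 1$, with $\sum_k\gamma_k\beta_k=0$ and $\sum_k\gamma_k\beta_k(\bff_k-\bff)\ge \tfrac12\sum_k\gamma_k|\bff_k-\bff|$ (a duality argument in $\ell^1(\gamma)/\spn\{\be\}$), and sets $\bff_\pm=\sum_k\gamma_k(1\pm\beta_k)\bff_k$, $N_\pm=\sum_k\gamma_k(1\pm\beta_k)N_k$. These two points lie in the convex hull of the data and have midpoint exactly $(\bff,N)$, so the two-term inequality is applied once, with the coefficient evaluated at $(\bff,N)$ itself --- no comparison of $\alpha(\bu^*/\bu)\bu^*$ at different nodes is ever needed. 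Convexity of $\wt\cB$ (which follows from the midpoint inequality plus continuity) then gives $\tfrac12\bigl(\wt\cB(\bff_+,N_+)+\wt\cB(\bff_-,N_-)\bigr)\le\sum_k\gamma_k\wt\cB(\bff_k,N_k)$, and the factor $\tfrac12$ from the duality step, squared, is precisely where the constant $\tfrac14$ comes from. If you want to salvage an induction, you would have to restructure it so that every application of the two-point inequality has $(\bff,N)$ as its midpoint, which essentially forces the paper's construction.
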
 

This lemma follows from the following general fact about convex function and is not specific to the function $\wt\cB$ introduces above. 

\begin{lm}
\label{l:2-to-n}
Let $\cN$ be an affine space, and let $D$ be a convex subset of $\R\times \cN$. 

Let $\cB$ be a  function on $D$, continuous on any finite-dimensional affine submanifold of $D$, and such that for all $(\bff, N), (\bff_\pm, N_\pm)\in D \subset \R\times \cN$ satisfying 
\[
\bff= (\bff_+ +\bff_-)/2, \qquad N= (N_+ + N_-)/2
\]
we have 
\begin{align}
\label{dy-assume}
\frac{\cB(\bff_+, N_+) + \cB(\bff_-, N_-) }{2} - \cB(\bff, N) \ge c(\bff, N) |\bff_+ - \bff|^2 \ge 0
\end{align}

Then for all $\bff, \bff_k \in\R$, $\gamma_k\in \R_+$ and the distribution functions $N$, $N_k$, $k\in\N$ satisfying
\[
\bff = \sum_{k=1}^n  \gamma_k \bff_k, \qquad N = \sum_{k=1}^n  \gamma_k N_k,  \qquad 
\sum_{k=1}^n \gamma_k =1.  \ 
\]
The following estimate holds:
\begin{align}
\label{MainIneq-abstract}
-\cB(\bff, N) + \sum_{k=1}^n \gamma_k \cB(\bff_k, N_k) \ge \frac{1}{4}\cdot c(\bff, N) \left( \sum_{k=1}^n \gamma_k | \bff_k -\bff |  \right)^2 
\end{align}
\end{lm}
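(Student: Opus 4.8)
The plan is to deduce the general $n$-point inequality \eqref{MainIneq-abstract} from the two-point hypothesis \eqref{dy-assume} by an induction that "splays out" the convex combination $\sum_k \gamma_k(\bff_k, N_k)$ one point at a time, in the style of the classical Bellman-function induction. First I would reduce to the case where all weights $\gamma_k$ are dyadic rationals (or, cleaner, handle the general case by a limiting/continuity argument at the end, using that $\cB$ is continuous on finite-dimensional affine submanifolds of $D$, which holds here since all the $(\bff_k,N_k)$ and their convex combinations live in a fixed finite-dimensional affine subspace). Once the weights are of the form $\gamma_k = m_k/2^L$, the convex combination can be realized as the average over $2^L$ leaves of a balanced binary tree of depth $L$, where each leaf carries one of the points $(\bff_k,N_k)$ (with multiplicity $m_k$).

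The core step is then a telescoping argument down this binary tree. At each internal node $\nu$ of the tree, with value $(\bff^\nu, N^\nu)$ equal to the average of its two children's values, hypothesis \eqref{dy-assume} gives
\[
\frac{\cB(\text{left child}) + \cB(\text{right child})}{2} - \cB(\bff^\nu,N^\nu) \ge c(\bff^\nu,N^\nu)\,|\bff^{\text{child}} - \bff^\nu|^2 \ge 0.
\]
Summing these node inequalities with the appropriate probability weights down the tree, the left-hand side telescopes to $\sum_k \gamma_k \cB(\bff_k,N_k) - \cB(\bff,N)$ (using that $\cB\ge 0$ only through the fact that each increment is nonnegative, which lets one keep only the bottom layer). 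The right-hand side is a sum of nonnegative terms $c(\bff^\nu,N^\nu)\gamma_\nu |\bff^{\text{child}_\nu}-\bff^\nu|^2$ over internal nodes. Here I would use two monotonicity facts: first, since each one-step increment in $\bff$ along the tree is nonnegative in the sense that the oscillations telescope, the triangle inequality gives $\sum_k \gamma_k|\bff_k - \bff| \le \sum_{\text{internal }\nu} \gamma_\nu \cdot (\text{oscillation at }\nu)$-type bound; second, the coefficients $c(\bff^\nu,N^\nu)$ appearing at intermediate nodes must be controlled below by $c(\bff,N)$. The factor $1/4$ in \eqref{MainIneq-abstract} is exactly the loss one incurs from a Cauchy–Schwarz / square-of-a-sum estimate of the form $\big(\sum \gamma_\nu a_\nu\big)^2 \le C \sum \gamma_\nu a_\nu^2$ combined with the geometric decay of the weights down the tree levels — this is the same mechanism that produces the constant $4$ in the martingale Carleson embedding theorem.

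The step I expect to be the main obstacle is controlling the coefficient $c(\bff,N)$ correctly: in \eqref{dy-assume} the constant on the right depends on the \emph{node} $(\bff,N)$, not on a fixed point, so to conclude \eqref{MainIneq-abstract} with $c(\bff,N)$ at the \emph{root} one must know that $c$ does not get smaller as one moves to intermediate nodes along the tree, or else absorb this by a separate argument. In the application ($\cB = \wt\cB$, $c(\bff,N) = \tfrac12\big(\alpha(\bu^*/\bu)\bu^*\big)^{-1}$ up to the relevant normalization) this is handled because $\bu^* = \bu^*(N)$ is concave along the martingale dynamics (Corollary \ref{c:conv-02}) and $t\mapsto t\alpha(t)$ is increasing, so $\bu^*(N^\nu) \le \bu^*(N)$ fails in the wrong direction at children but holds in the right direction at the root relative to the leaves; the cleanest route is therefore to run the telescoping so that the coefficient extracted at every node is bounded below by the root coefficient, which I would establish using precisely the concavity/monotonicity already proved in Section \ref{s:ConvOf_n}. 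Once that monotonicity is in hand, the rest is the bookkeeping of the binary-tree telescoping plus one application of the elementary inequality $(a+b)^2\le 2(a^2+b^2)$, iterated $L$ times, to pass from squares of individual increments to the square of the total $\sum_k\gamma_k|\bff_k-\bff|$.
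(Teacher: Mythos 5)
Your tree-telescoping plan runs into two genuine obstacles, and the second one you yourself flag is fatal for the lemma \emph{as stated}. The lemma is an abstract statement: the only hypothesis on $c(\fdot,\fdot)$ is the pointwise inequality \eqref{dy-assume} with $c\ge 0$; nothing relates $c(\bff^\nu,N^\nu)$ at an intermediate node of your binary tree to $c(\bff,N)$ at the root. So the constants you extract at internal nodes cannot, in general, be bounded below by the root constant, and the application-specific rescue you sketch does not work either: concavity of $N\mapsto\bu^*(N)$ (Corollary \ref{c:conv-02}) bounds the \emph{root} value from below by averages of the children's values, but gives no upper bound on $\alpha(\bu^*_\nu/\bu_\nu)\bu^*_\nu$ at an individual intermediate node in terms of the root quantity, which is what you would need. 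There is also a quantitative gap in the bookkeeping: after telescoping you must pass from $\sum_{\nu}\gamma_\nu|\Delta_\nu|^2$ (sum over internal nodes of weighted squared increments) to $\bigl(\sum_k\gamma_k|\bff_k-\bff|\bigr)^2$. The triangle inequality gives $\sum_k\gamma_k|\bff_k-\bff|\le\sum_\nu\gamma_\nu|\Delta_\nu|$, but the weights on each level of the tree sum to $1$, so Cauchy--Schwarz loses a factor equal to the depth $L$ of the tree; there is no geometric decay to save you, and $L\to\infty$ as you refine the dyadic approximation of general weights $\gamma_k$. So your route does not produce the uniform constant $\frac14$.

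The paper avoids both problems by applying the quantitative two-point hypothesis \emph{exactly once, at the root}, so that only $c(\bff,N)$ ever appears. The trick is a duality argument in the quotient space $\ell^1(\gamma)/\spn\{\be\}$, $\be=(1,\dots,1)$: for $x_k=\bff_k-\bff$ one finds signs $\beta_k$ with $|\beta_k|\le1$, $\sum_k\gamma_k\beta_k=0$ and $\sum_k\gamma_k\beta_k(\bff_k-\bff)\ge\frac12\sum_k\gamma_k|\bff_k-\bff|$. Setting $\bff_\pm=\sum_k\gamma_k(1\pm\beta_k)\bff_k$, $N_\pm=\sum_k\gamma_k(1\pm\beta_k)N_k$, the point $(\bff,N)$ is the midpoint of $(\bff_+,N_+)$ and $(\bff_-,N_-)$; applying \eqref{dy-assume} to this single split gives the right-hand side $c(\bff,N)|\bff_+-\bff|^2\ge\frac14 c(\bff,N)\bigl(\sum_k\gamma_k|\bff_k-\bff|\bigr)^2$, which is where the $\frac14$ comes from, while the left-hand side is dominated by $-\cB(\bff,N)+\sum_k\gamma_k\cB(\bff_k,N_k)$ using only the \emph{qualitative} convexity of $\cB$ (midpoint convexity from \eqref{dy-assume} plus continuity on lines), which costs no constant. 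If you want to salvage your approach you would have to build this one-shot mechanism in anyway, at which point the tree is unnecessary.
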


\begin{proof}
The reasoning below is a ``baby version'' of the reasoning used to prove the main estimate (Lemma 6.1) in \cite{TB2011}.   

For a weight  $\gamma = \{\gamma_k\}_{k=1}^n$, $\gamma_k\ge0$, let $\ell^p(\gamma)$ be the (finite-dimensional) weighted  $\ell^p$ spaces, 
\[
\|x\|_{\ell^p(\gamma)}^p =\sum_{k=1}^n \gamma_k |x_k|^p, \qquad \|x\|_{\ell^\infty(\gamma)} = \sup\{|x_k|  : \gamma_k\ne 0\}
\]
(of course we need to take the quotient spaces over $\{x:\|x\|_{\ell^p(\gamma)}=0\}$). 

Let $\La\fdot, \fdot \Ra_\gamma$ be the standard duality $\La x, y\Ra_\gamma = \sum_{k=1}^n \gamma_k x_k y_k$. 

Define $\be\in \ell^p(\gamma) $, $\be = (1, 1, \ldots, 1)$. 

Consider the quotient space $\cX = \ell^1(\gamma)/\spn\{\be\}$. For $x\in \ell^1(\gamma)$ let 
\[
x^0:= x- \|\be\|_{\ell^1(\gamma)}^{-1} \La x, \be \Ra_\gamma \be, 
\] 
so $\sum_{k=1}^n \gamma_k x^0_k =0$. Then
\begin{align}
\label{eq-QuotNorm}
\|x\|\ci{\cX} \le \|x^0\|_{\ell^1(\gamma)} \le 2\|x\|\ci{\cX}.	
\end{align}
Indeed, the first inequality is trivial (follows from the definition of the norm in the quotient space). As for the second one, $|\La x, \be\Ra_\gamma| \le  \|x\|_{\ell^1(\gamma)}$, 
so it follows from the triangle inequality that
\[
\|x^0\|_{\ell^1(\gamma)} \le \|x\|_{\ell^1(\gamma)} + \|\be\|_{\ell^1(\gamma)}^{-1} |\La x, \be \Ra | \cdot \|\be\|_{\ell^1(\gamma)} \le 2 \|x\|_{\ell^1(\gamma)}. 
\] 
This inequality remains true if one replaces $x$ by $x-\beta\be$, $\beta\in\R$, so the second inequality in \eqref{eq-QuotNorm} is proved. 

The dual space $\cX^*$ can be identified with s subspace of $\ell^\infty(\gamma)$ consisting of $x^*\in \ell^\infty(\gamma)$ such that $\La \be, x^*\Ra_\gamma =\sum_k \gamma_k x_k =0$ (with the usual $\ell^\infty(\gamma)$-norm).

So, for the vector $x= \{x_k\}_{k=1}^n$, $x_k =\bff_k -\bff$ (notice that $\La x, \be\Ra_\gamma =0$) there is $\beta =\{\beta_k\}_{k=1}^n$, $|\beta_k |\le 1$ such that $\sum_{k=1}^n \gamma_k \beta_k =0$ and 
\[
\sum_{k=1}^n \gamma_k \beta_k (\bff_k-\bff) = \|x\|\ci{\cX} \ge \frac12 \| x\|_{\ell^1(\gamma)} =\frac12  \sum_{k=1}^n \gamma_k |\bff_k -\bff|. 
\]

Define $\bff_+$, $\bff_-$, $N_+$, $N_-$ by 
\begin{align}
\label{ConvComb-01}
\bff_\pm = \sum_{k=1}^n \gamma_k (1 \pm \beta_k) \bff_k, \qquad N_\pm := \sum_{k=1}^n \gamma_k (1 \pm \beta_k) N_k  
\end{align}
(note that $\sum_k \gamma_k (1\pm \beta_k) =1$ so $(\bff, N)$ is in the convex hull of $(\bff_k, N_k)$).

By the assumption  \eqref{dy-assume} of the lemma 
\begin{align}
\label{MainIneq_01}
\frac12 \Bigl( \cB(\bff_+, N_+))   +  \cB(\bff_-, N_-)) \Bigr) - \cB(\bff, N) \ge c(\bff, N)   (\bff_+ -\bff)^2. 
\end{align}
We know that 
\begin{align*}
| \bff_+ -\bff | = \sum_{k=1}^n \gamma_k \beta_k \bff_k = \sum_{k=1}^n \gamma_k \beta_k (\bff_k -\bff) \ge \frac 12 
\sum_{k=1}^n \gamma_k |\bff_k -\bff |
\end{align*}
(the second equality holds because $\sum_{k=1}^n \gamma_k\beta_k=0$), so the right side of \eqref{MainIneq_01} is estimated below by
\[
\frac{1}{4}  c(\bff, N) \left( \sum_{k=1}^n \gamma_k | \bff_k -\bff |  \right)^2
\]
The assumption \eqref{dy-assume} together with continuity on lines implies that  the function $\cB$ is convex, so we can conclude from \eqref{ConvComb-01} that 
\begin{align*}
\cB(\bff_+, N_+) &\le  \sum_{k=1}^n \gamma_k (1 + \beta_k) \cB(\bff_k, N_k),  \\ 
\cB(\bff_-, N_-) &\le  \sum_{k=1}^n \gamma_k (1 - \beta_k) \cB(\bff_k, N_k)
\end{align*}
and adding these inequalities we can estimate above the left side of \eqref{MainIneq_01} by 
\[
-\cB(\bff, N) + \sum_{k=1}^n \gamma_k \cB(\bff_k, N_k)  . 
\]
\end{proof}

\subsection{From main inequality to the embedding theorem}

The embedding theorem (Theorem \ref{t:H1_bumps-01}) follows from Lemma \ref{l:BellMainIneq-01} by the standard Bellman function reasoning.

We first prove the theorem under the assumption that each $I\in\cD$ has finitely many children. 
Let $I^0\in\cD$ and let $I_k$ be its children. As above, assume the normalization $C_\alpha=1$ for $\alpha$. 

Applying Lemma \ref{l:BellMainIneq-01} to $\bff=\La fu\Ra\ci{I^0}$, $\bff_k=\La bu\Ra\ci{I_k}$, $N=N_{I^0}^u$, $N_k=N_{I_k}^u$ (recall that $N_I^u $ is the normalized distribution function, see\eqref{NormDistrFn}) with $\gamma_k =|I_k|/|I^0|$ we get after multiplying by $|I^0|$
\[
\frac{1}{4} \cdot \frac{\|\Delta\ci{I^0} (fu)\|\ci{L^1(I^0)}^2 }{\alpha(\bu^*\ci{I^0}/\bu\ci{I^0}) \bu^*\ci{I^0}  } \, |I^0| \le
\sum_{I\in\ch(I^0)} |I| \wt\cB(\La fu\Ra\ci I, N_I^u ) \, |\  - \ |I^0| \, \wt\cB(\La fu\Ra\ci{I^0}, N_{I^0}^u )
\]
Applying this formula to all children of $I^0$, then to their children and using the telescoping sum in the right side we get after going $n$ generations down  that 
\begin{align*}
\frac{1}{4} \sum_{\substack{I\in \ch_k(I^0) \\ 0\le k < n }} 
 \frac{\|\Delta\ci{I} ( fu) \|\ci{L^1(I)}^2}{\alpha(\bu^*\ci{I}/\bu\ci{I}) \bu^*\ci{I}  } \, |I| 
 &\le 
\sum_{I\in\ch^n(I^0)} |I| \wt\cB(\La fu\Ra\ci I, N_I^u ) \, |\  - \ |I^0| \, \wt\cB(\La fu\Ra\ci{I^0}, N_{I^0}^u )
 \\
 & \le 
 \sum_{I\in\ch_n(I^0)} |I| \cdot \wt\cB(\La  f u \Ra\ci I, N_I^w ) \le 9 \sum_{I\in\ch_n(I^0)} 
|I| \frac{|\La fu\Ra\ci I|^2}{\La u\Ra\ci I};
\end{align*}
in the last inequality we used the property \eqref{bound_B} of $\wt\cB$. 

It follows from the Cauchy--Schwartz that 
\[
\frac{|\La fu\Ra\ci I|^2}{\La u\Ra\ci I} \le \La F^2u\Ra\ci I = |I|^{-1}\int_I F^2 u , 
\]
so the previous inequality means that 
\[
\sum_{\substack{I\in \ch_k(I^0) \\ 0\le k < n }} 
 \frac{\|\Delta\ci{I} ( fu) \|\ci{L^1(I)}^2}{\alpha(\bu^*\ci{I}/\bu\ci{I}) \bu^*\ci{I}  } \, |I| \le 36 \int_{I^0} f^2 u dx = 36 \|f\1\ci{I^0}\|\ci{L^2(u)}. 
\]
Then, letting first $n\to\infty$ and then taking the sum over all $I^0\in \cD_{-m}$ and letting $m\to \infty$ we get the conclusion of the theorem. 

To prove the result in general situation, when the intervals can have infinitely many children, we  prove the result for the functions  $f$ which can be represented as  finite sums
\[
f=\sum_{I\in \cD_r} c\ci I \1\ci I  
\]
($n$ is not fixed). In this case any interval $J\in\cD_n$, $n<r$ has only finitely many children containing $I$s from the above sum. On the other hand, nothing will change in the left hand side of \eqref{embed-02} if we treat the rest of the children of $J$ as one interval, so we arrive to the case when each interval has finitely many children, which we already proved. 

Standard approximation reasoning gives us that for all $r$ the theorem holds for functions constant on the intervals $I\in\cD_r$. Since for 
\[
\E_r^u f := \sum_{I\in\cD_r} \frac{\La fu\Ra\ci I}{\La u\Ra\ci I} \1\ci I 
\] 
we have $\|\E^u_r f \|\ci{L^2(u)} \le \|f\|\ci{L^2(u)}$, applying the theorem for $\E^u_r f$ we get the estimate for the sum over all $I\in \cD_n$, $n<r$. Letting $r\to\infty $ give the theorem. \hfill\qed

\subsection{Two sided bumps for paraproducts}

Theorem \ref{t:main-01} for paraproducts is immediately obtained by combining Embedding Theorems \ref{t:embed-03} and \ref{t:H1_bumps-01} via Cauchy--Schwarz. 

Indeed, denote 
\[
\bu^*\ci I := \| u\|\ci{\Lambda_{\psi_0}(I)} \ge \|M\1\ci I\|_1, \qquad \psi_0(s) = s\ln(e/s), 
\]
and similarly for $v$. 
For $f\in L^2(u)$, $g\in L^2(v)$ we get assuming  \eqref{entr-bump-01} with $A=1$, 
\begin{align*}
\left|(\Pi_b fu, g)\ci{L^2(v)} \right| 
& \le \sum_{I\in\cD} |\La fu\Ra\ci I| \cdot \left|\left(b\ci I u, \Delta\ci I (bv) \right)\ci{L^2} \right| 
\\
& \le \sum_{I\in\cD} \frac{|\La fu\Ra\ci I|\cdot \|b\ci I\|_\infty |I|^{1/2} }{\bigl(\alpha(\bu^*\ci I/\bu\ci I) \bu^*\ci I \bigr)^{1/2}} \cdot 
\frac{ \|\Delta\ci I (gv) \|\ci{L^1(I)} |I|^{1/2} }{ \bigl(\alpha(\bv^*\ci I/\bv\ci I) \bv^*\ci I \bigr)^{1/2}}
\\
& \le 
\left(  \sum_{I\in\cD} \frac{|\La fu\Ra\ci I|^2 \|b\ci I\|_\infty^2 |I| }{\alpha(\bu^*\ci I/\bu\ci I) \bu^*\ci I} \right)^{1/2} 
\left( \sum_{I\in\cD} \frac{ \|\Delta\ci I (gv) \|\ci{L^1(I)}^2 |I| }{ \alpha(\bv^*\ci I/\bv\ci I) \bv^*\ci I} \right)^{1/2}; 
\end{align*}
the second inequality holds because of \eqref{entr-bump-01} with $A=1$ (note also that $\|gv\|_1= \|gv\|\ci{L^1(I) }|I|$), and the last one is just the Cauchy--Schwarz.

Applying to the sums in parentheses Theorems \ref{t:embed-03} and \ref{t:H1_bumps-01} respectively we get Theorem \ref{t:main-01} for paraproducts. 

Gathering together estimates we can get the constant $C$ in \eqref{inte-02} to be equal to $(24C_\alpha A^{1/2})^2$, where $C_\alpha$ is defined by \eqref{C_alpha} and $A$ is the supremum in \eqref{entr-bump-01}. \hfill \qed

\section{One-sided bumps for the Lerner type operators}

Let $\cQ$ be a sparse collection, and let $T=T\ci\cQ$ be the corresponding Lerner type (sparse) operator. 

As above in Section \ref{s:lerner-2side}, for a weight $u$ let $\bu\ci I:= \La u\Ra\ci I = \|u\|\ci{L^1(I)}$, 
\[
\bu^*\ci I:= 
\| M \1\ci I u \|\ci{L^1(I)} \le
\|u\|_{\Lambda_{\psi_0}(I)}, \qquad \psi_0(s) = s\ln(e/s), 
\] 
where $M$ is the martingale maximal function and similarly for a weight $v$. 

Let also as above $\alpha:[1, \infty)\to \R_+$ be a function such that $t\mapsto t\alpha(t)$ is increasing and 
\begin{align}
\label{C_alpha-mod}
C_\alpha:=\int_1^\infty \frac1{t\alpha(t)} dt <\infty. 
\end{align}

To prove Theorem \ref{1sided-02} we first recall that a sparse operator is a particular case of the so-called \emph{positive dyadic} operators, 
and for such operators \eqref{1sided-02} holds if and only if the so-called \emph{Sawyer type testing conditions}
\begin{align}
\label{SawTestCond}
\int_I |T(\1\ci I u)|^2 v dx & \le S \cdot \|\1\ci I\|^2\ci{L^2(u)} = S \cdot\La u \Ra\ci I |I|, \\
\int_I |T(\1\ci I v)|^2 u dx & \le S \cdot \|\1\ci I\|^2\ci{L^2(v)} = S \cdot\La v \Ra\ci I |I|
\end{align}
are satisfied  for some $S<\infty$ for all $I\in\cD$; moreover the constant $C$ in  \eqref{1sided-02} can be estimated by $KS$, where $K$ is an absolute constant  ($K= \left(8(2+\sqrt2)\right)^2$ can be obtained by tracking estimates in \cite{Tr_PosDy_2012}).

\begin{thm}
\label{t:TestLernOp_p=2}
Let $u$, $v$ be weights such that 
\begin{align}
\label{Bumped_H1_bump}
\sup_{I\in\cQ}  \alpha(\bu\ci I^*/\bu\ci I)^2 \bu^*\ci I  \bv\ci I = A< \infty
\end{align}
Then for any $I_0\in \cD$ 
\[
\int_{I_0} |T (\1\ci{I_0}u)|^2 v dx \le C C_\alpha^2 A\, \|1\ci{I_0}\|\ci{L^2(u)}^2 = C C_\alpha^2 A \, \bu\ci{I_0} |I_0|, 
\]
where $C_\alpha$ is given by \eqref{C_alpha-mod} and $C$ is an absolute constant. 
\end{thm}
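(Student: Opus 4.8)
\emph{Reduction.} As recalled above, $T=T\ci\cQ$ is a positive dyadic operator, so by the cited results of \cite{Tr_PosDy_2012} it suffices to prove, for a fixed $I_0\in\cD$, the single testing inequality $\int_{I_0}|T(\1\ci{I_0}u)|^2v\,dx\le CC_\alpha^2A\,\bu\ci{I_0}|I_0|$ (the ``dual'' one follows by symmetry from the second half of \eqref{1sided-02}). First I would write $T(\1\ci{I_0}u)=\sum_{Q\in\cQ,\,Q\subseteq I_0}\bu\ci Q\1\ci Q$ and use that, with $E\ci Q:=Q\setminus\bigcup\{Q'\in\cQ:Q'\subsetneq Q\}$, one has $|E\ci Q|\ge\tfrac12|Q|$, the sets $E\ci Q$ with $Q\subseteq I_0$ partition $I_0$ (one may assume $I_0\in\cQ$), and on $E\ci Q$ the function $T(\1\ci{I_0}u)$ equals the constant $\Sigma\ci Q:=\sum_{L\in\cQ:\,Q\subseteq L\subseteq I_0}\bu\ci L$.

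\emph{Expanding the square.} Next I would combine this with the Abel-type inequality $\Sigma\ci Q^2\le 2\sum_{L\in\cQ:\,Q\subseteq L\subseteq I_0}\bu\ci L\Sigma\ci L$ (telescoping $\Sigma^2$ along the chain of $\cQ$-ancestors of $Q$ inside $I_0$) to get
\[
\int_{I_0}|T(\1\ci{I_0}u)|^2v\,dx=\sum_{Q\in\cQ,\,Q\subseteq I_0}\Sigma\ci Q^2\int_{E\ci Q}v\,dx\le 2\sum_{Q\in\cQ,\,Q\subseteq I_0}\bu\ci Q\Sigma\ci Q\bv\ci Q|Q|;
\]
interchanging the order of summation in $\Sigma\ci Q$ turns the right side into $2\sum_{L\in\cQ,\,L\subseteq I_0}\bu\ci L W\ci L$ with $W\ci L:=\sum_{Q\in\cQ,\,Q\subseteq L}\bu\ci Q\bv\ci Q|Q|$.

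\emph{Reducing to a per-cube estimate.} The plan is then to prove the per-cube bound
\[
W\ci L\ \le\ c_0\,C_\alpha\,A\;\frac{\bu\ci L|L|}{\alpha(\bu\ci L^*/\bu\ci L)\,\bu\ci L^*}\qquad(L\in\cQ,\ L\subseteq I_0),
\]
with $c_0$ absolute. Given it, $\bu\ci L W\ci L\le c_0C_\alpha A\,\dfrac{\bu\ci L^2}{\alpha(\bu\ci L^*/\bu\ci L)\bu\ci L^*}|L|$, and since $\cQ$ is sparse the constant sequence $a\ci L\equiv1$ is Carleson with $\|a\|\ti{Carl}\le2$; applying Theorem \ref{t:embed-03} to $f=\1\ci{I_0}$ (so $\La fu\Ra\ci L=\bu\ci L$ for $L\subseteq I_0$) would give $\sum_{L\in\cQ,\,L\subseteq I_0}\dfrac{\bu\ci L^2}{\alpha(\bu\ci L^*/\bu\ci L)\bu\ci L^*}|L|\le8C_\alpha\bu\ci{I_0}|I_0|$, and hence $\int_{I_0}|T(\1\ci{I_0}u)|^2v\,dx\le16\,c_0\,C_\alpha^2A\,\bu\ci{I_0}|I_0|$.

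\emph{The per-cube estimate --- and the main obstacle.} To establish the per-cube bound I would feed in the one-sided bump \eqref{Bumped_H1_bump}, i.e.\ $\bv\ci Q\le A\bigl(\alpha(\bu\ci Q^*/\bu\ci Q)^2\bu\ci Q^*\bigr)^{-1}$, then replace $\bu\ci Q|Q|=\int_Q u$ by $2\int_{E\ci Q}M(\1\ci L u)\,dx$ (sparseness via the maximal function, as in \eqref{P_le_U^*/u}) and sum over the partition $\{E\ci Q\}$ of $L$, arriving at $W\ci L\le 2A\int_L M(\1\ci L u)(x)\,g(x)\,dx$, where $g(x):=\bigl(\alpha(\bu\ci{Q(x)}^*/\bu\ci{Q(x)})^2\bu\ci{Q(x)}^*\bigr)^{-1}$ and $Q(x)\in\cQ$ is the smallest $\cQ$-cube containing $x$. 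Since $\La M(\1\ci L u)\Ra\ci L=\bu\ci L^*$, the task becomes estimating this correlated integral, which I would do by a layer-cake / stopping-time decomposition of $L$ organized by the dyadic size of the entropy ratio $\bu\ci Q^*/\bu\ci Q$; there the elementary bound $1/\alpha(s)\le 2C_\alpha$ for $s\ge1$ (from $t\mapsto t\alpha(t)$ increasing and $C_\alpha<\infty$) and the convergence of $\int_1^\infty \frac{dt}{t\alpha(t)}$ would be used to sum over the levels, producing the extra $C_\alpha$ in the per-cube bound --- and thereby the $C_\alpha^2$ in the theorem. This is where essentially all the difficulty sits: the cubes $Q\subseteq L$ are nested and the ratios $\bu\ci Q^*/\bu\ci Q$ need not stabilize, so neither a single Carleson estimate nor a pointwise bound on $g$ will do, and one must genuinely exploit the correlation between $M(\1\ci L u)$ and $g$ together with the entropy integral. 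Everything else --- the expansion of the square, the reorganization by outer cubes, and the application of Theorem \ref{t:embed-03} with the trivial Carleson weight --- is routine.
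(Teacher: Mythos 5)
Your skeleton (reduction to the Sawyer testing inequality, expansion of the square over the sparse family into $2\sum_{L\in\cQ,\,L\subseteq I_0}\bu_L W_L$, and the final application of Theorem \ref{t:embed-03} with the trivial Carleson sequence) is sound, but the whole proof hangs on the per-cube bound $W_L\le c_0C_\alpha A\,\bu_L|L|/\bigl(\alpha(\bu^*_L/\bu_L)\,\bu^*_L\bigr)$, which you do not prove, and the route you sketch for it cannot work. Once you substitute $\bv_Q\le A\bigl(\alpha(\bu^*_Q/\bu_Q)^2\bu^*_Q\bigr)^{-1}$ separately into every term, the statement you would have to prove becomes a $v$-free inequality, essentially $\int_L M(\1_L u)\,g\,dx\lesssim C_\alpha\,\bu_L|L|/\bigl(\alpha(\bu^*_L/\bu_L)\bu^*_L\bigr)$ with $g$ determined by the minimal cube, and this is false. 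Take $L=[0,1]$, $u=\1_{[1/2,1]}+N\1_{[0,1/N]}$, and let $\cQ$ consist of $L$ together with a partition of $[1/2,1]$ into dyadic intervals (a sparse family). For each small $Q$ one has $\bu_Q=\bu^*_Q=1$, so $g\equiv\alpha(1)^{-2}$ on $[1/2,1]$ while $M(\1_Lu)\ge1$ there; hence the left-hand side is at least $1/(2\alpha(1)^2)$, whereas the right-hand side tends to $0$ as $N\to\infty$ because $\bu^*_L\approx\ln N$. Note that the genuine $W_L$ is perfectly fine in this example: the bump at $L$ itself forces $\sum_{Q\subseteq[1/2,1]}\int_Q v\le\int_L v\le A|L|/\bigl(\alpha(\bu^*_L/\bu_L)^2\bu^*_L\bigr)$ --- and this is exactly the information your cube-by-cube substitution discards, namely that the \emph{same} $v$ must satisfy the bump at all scales simultaneously. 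So no layer-cake or stopping-time refinement of the correlated integral can close the gap: the inequality you would be left to prove after the substitution is simply false, and whether your per-cube inequality itself is true remains open.

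This is also precisely the point where the paper's proof is organized differently: it never attempts a per-cube estimate. Instead it splits $T=\sum_{k,n\ge0}T_{k,n}$, retaining in each piece only the cubes with $2^k\le\bu^*_I/\bu_I<2^{k+1}$ and $2^{-n-1}B_k<\bu_I\bv_I\le2^{-n}B_k$, $B_k=2^{-k}\alpha(2^k)^{-2}A$, so that the bump information about $v$ is kept in force at \emph{every} scale of the piece; each $T_{k,n}$ is then handled by the $u$-doubling stopping-time construction (Lemma \ref{l:carl-U_J}, which needs only $\sup_I\bu_I\bv_I$, the disjointness of the sets $J\setminus G(J)$, the $\nu$-Carleson property of the stopping family and the martingale Carleson embedding), with the band restriction supplying $\bu^*_{I_0}\le2^{k+1}\bu_{I_0}$ exactly where your argument would need per-cube control; finally a geometric sum in $n$ and Lemma \ref{sum-alpha_2^k} ($\sum_k\alpha(2^k)^{-1}\le2C_\alpha$) are used, the $C_\alpha^2$ appearing when the norm estimate is squared. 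If you wish to keep your organization via Theorem \ref{t:embed-03}, you must either prove the per-cube bound by an argument that genuinely uses the bump at the large cubes, or import the $(k,n)$-splitting into your estimate of $\sum_L\bu_LW_L$. Two smaller points: the identity $\int_{I_0}|T(\1_{I_0}u)|^2v\,dx=\sum_Q\Sigma_Q^2\int_{E_Q}v\,dx$ needs care on the (possibly $v$-charged) set of points lying in infinitely many cubes of $\cQ$ --- the direct expansion $\sum_{Q,L}\bu_Q\bu_L\,\nu(Q\cap L)\le2\sum_Q\bu_Q\Sigma_Q\,\nu(Q)$ avoids this --- and your bound $1/\alpha(s)\le2C_\alpha$ for $s$ near $1$ requires the form \eqref{C_alpha} of $C_\alpha$ (including the $1/\alpha(1)$ term) rather than \eqref{C_alpha-mod}.
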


%

\subsection{Some preliminaries}
Before proving the theorem we need to introduce some notation. Let us construct the family of stopping moments $\cG=\cG(I_0)$ as follows. 

For an interval (a cube) $J$ we denote $\cG^*(J)$ to be the collection of maximal (by inclusion) intervals $I$ such that 
\begin{align}
\label{StopMoment}
\La u \Ra\ci I \ge 2\La u\Ra\ci J. 
\end{align}
Then we define the generations of stopping moments $\cG_k$ inductively, $\cG_0 =\{I_0\}$, 
\begin{align}
\label{cG_k}
\cG_{k+1} = \bigcup_{I \in \cG_k}\cG^*(I),  
\end{align}
and put $\cG:= \bigcup_{k\ge0} \cG_k$.

For an interval $J\in \cQ$ denote 
\[
\cQ(J):= \{I\in \cQ:I\subset J\}
\]
(note that $J\in\cQ(J)$), and define
\begin{align}
\label{cE(J)}
\cE(J):= \cQ(J)\setminus \bigcup_{I\in\cG^*(J)}\cQ(I). 
\end{align}

For $I\in\cG$ define
\begin{align}
\label{U_J}
U\ci J = U\ci{\cE(J)} := \sum_{I\in\cE(J)} \La u\Ra\ci I \1\ci I. 
\end{align}

\begin{lm}
\label{l:carl-U_J}
Let
\begin{align}
\label{Muck-01}
\sup_{I\in\cQ(I_0)} \La u\Ra\ci I \La v\Ra\ci I = A_1<\infty. 
\end{align}
Then 
\[
\sum_{J\in\cG(I_0)} \|U\ci J\|\ci{L^2(v)}^2 \le C A_1 \bu^*\ci{I_0} |I_0|\,,
\]
where $C$ is an absolute constant. 

We should emphasize that we do not assume \eqref{Bumped_H1_bump} here. 
\end{lm}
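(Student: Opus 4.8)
The plan is to expand $\|U\ci J\|\ci{L^2(v)}^2$ into a double sum over $I, I'\in\cE(J)$ and organize it by the larger of the two intervals, which is the standard way of handling quadratic expressions in a stopping-time tree. Writing $\|U\ci J\|\ci{L^2(v)}^2 = \sum_{I\in\cE(J)}\La u\Ra\ci I^2\int_I\Bigl(\sum_{I'\in\cE(J),\, I'\supseteq I}\1\ci{I'} + \sum_{I'\in\cE(J),\, I'\subsetneqq I}\1\ci{I'}\Bigr) v\,dx$ is not quite how I would do it; cleaner is to note $U\ci J(x) = \sum_{I\in\cE(J):\, x\in I}\La u\Ra\ci I$ and bound this pointwise. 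The key observation is that inside $\cE(J)$ the averages $\La u\Ra\ci I$ do not jump: by construction of $\cG^*(J)$, for every $I\in\cE(J)$ we have $\La u\Ra\ci I < 2\La u\Ra\ci J$, but more importantly the averages along a chain $I\subsetneqq\cdots\subsetneqq J$ inside $\cE(J)$ are all comparable to $\La u\Ra\ci J$ from above. So the only thing that makes $U\ci J$ large at a point $x$ is the \emph{number} of intervals $I\in\cE(J)$ containing $x$, times roughly $\La u\Ra\ci J$.

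First I would prove the pointwise bound $U\ci J(x) \le C\, M(u\1\ci J)(x)$ for $x\in J$, or something in that spirit. The point is: if $I\in\cE(J)$ contains $x$ then $\La u\Ra\ci I \le 2\La u\Ra\ci J$ (else a stopping interval would separate $I$ from $J$), and summing a geometric-type series over the nested intervals $I\in\cE(J)$ through $x$ — using that $\cQ$ is sparse so that $|I|$ shrinks geometrically along such a chain, or alternatively truncating at the level where we hit a child-of-stopping interval — gives $U\ci J(x)\lesssim \La u\Ra\ci J$ when $x$ is not too deep, and in general $U\ci J(x)\lesssim M_{\cE(J)}(u\1\ci J)(x) \le M(u\1\ci J)(x)$. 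Then
\[
\|U\ci J\|\ci{L^2(v)}^2 = \int_J U\ci J^2\, v\,dx \le C\int_J \bigl(M(u\1\ci J)\bigr)^2 v\,dx.
\]
Now invoke the $A_\infty$-type bound coming from \eqref{Muck-01}: on $J$ one has $\La v\Ra\ci I \le A_1/\La u\Ra\ci I$ for every $I\subset J$, which is exactly a Muckenhoupt $A_2$ relation between $u\,dx$ and $v\,dx$ on the subtree below $J$, and it lets one dominate $\int_J \phi\, v\,dx$ for a maximal-type $\phi$ by $A_1$ times $\int_J(\text{something in }u)$. Concretely I expect $\int_J\bigl(M(u\1\ci J)\bigr)^2 v\,dx \le C A_1\, \La u\Ra\ci J\, |J|$, using that $M(u\1\ci J)$ is essentially $\La u\Ra\ci J$ on a large portion of $J$ and that its level sets at height $\lambda\La u\Ra\ci J$ have $v$-measure controlled via \eqref{Muck-01} together with the weak-type $(1,1)$ bound for $M$.

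The final step is to sum over $J\in\cG(I_0)$. Here one uses that the stopping intervals of a fixed generation are disjoint and that $\La u\Ra\ci J$ doubles at each generation, which is exactly the mechanism that makes $\sum_{J\in\cG(I_0)}\La u\Ra\ci J|J|$ comparable to a single copy of a maximal-function integral rather than blowing up: $\sum_{J\in\cG}\La u\Ra\ci J|J| = \sum_k\sum_{J\in\cG_k}\La u\Ra\ci J|J| \le \sum_k 2^{-k}(\text{stuff}) $ is not literally it, but the correct statement is the well-known Carleson property of stopping times, $\sum_{J\in\cG}\La u\Ra\ci J|J| \le C\int_{I_0} M(u\1\ci{I_0})\,dx \le C\bu^*\ci{I_0}|I_0|$, where the last inequality is \eqref{LlogL-max01}/the $L\log L$ bound for the maximal function recalled in Section \ref{s:LlogL-Lorentz}. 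Combining, $\sum_{J\in\cG(I_0)}\|U\ci J\|\ci{L^2(v)}^2 \le C A_1 \sum_{J\in\cG}\La u\Ra\ci J|J| \le C A_1\, \bu^*\ci{I_0}|I_0|$, as claimed.

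\textbf{Main obstacle.} The delicate point is the pointwise/Carleson packaging of $U\ci J$: showing $U\ci J \lesssim M(u\1\ci J)$ (or directly that $\int_J U\ci J^2 v \lesssim A_1\La u\Ra\ci J|J|$) requires care because $\cE(J)$ can be a complicated subtree and the geometric decay one wants comes from sparseness of $\cQ$ rather than from the stopping construction itself; one must make sure these two structures ($\cG$-stopping and $\cQ$-sparseness) interact correctly, in particular that truncating $\cQ(J)$ by removing the $\cQ(I)$ for $I\in\cG^*(J)$ really does keep all surviving averages comparable to $\La u\Ra\ci J$. After that the two summations (over the subtree and over generations of $\cG$) are routine Carleson-embedding bookkeeping. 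I expect the constant to be genuinely absolute, with the $\bu^*$ (rather than $\bu$) appearing precisely at the last step through the $L\log L$ bound for the maximal function.
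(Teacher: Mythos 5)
Your overall architecture --- a per-$J$ estimate $\|U\ci J\|_{L^2(v)}^2\le C A_1\La u\Ra\ci J|J|$ followed by summing over the stopping family and using $\int_{I_0}M(u\1\ci{I_0})\,dx=\bu^*\ci{I_0}|I_0|$ --- is the paper's, and your final summation step is correct. But the route you propose to the per-$J$ estimate has a genuine gap. The pointwise bound $U\ci J(x)\le C\,M(u\1\ci J)(x)$ is false: take $u\equiv 1$ on $I_0=[0,1]$ and the sparse family $\cQ=\{[0,2^{-k}]:k\ge0\}$. Then every $\La u\Ra\ci I=1$, so no stopping intervals occur, $\cE(J)=\cQ(J)$, and $U\ci J(x)$ equals the number of $\cQ$-intervals containing $x$, which is about $\log_2(|J|/x)$ and hence unbounded, while $M(u\1\ci J)\equiv1$ on $J$. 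Sparseness does force $|I|$ to at least halve along a nested chain in $\cQ$, but the summands $\La u\Ra\ci I$ need not decay along that chain --- the stopping construction only gives $\La u\Ra\ci I<2\La u\Ra\ci J$ --- so no geometric series appears pointwise; the sum is proportional to the length of the chain. Moreover, the inequality you then invoke, $\int_J\bigl(M(u\1\ci J)\bigr)^2v\,dx\le CA_1\La u\Ra\ci J|J|$, is precisely Sawyer's testing condition for the maximal operator, which is strictly stronger than the two-weight $A_2$ condition and does not follow from \eqref{Muck-01} (Muckenhoupt--Wheeden type counterexamples); in addition, \eqref{Muck-01} only controls $\La v\Ra\ci I$ for $I\in\cQ(I_0)$, whereas $M(u\1\ci J)$ involves arbitrary intervals, about which the hypothesis says nothing.

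The correct mechanism (Lemma \ref{l:norm-U_J} in the paper) uses sparseness at the level of measure rather than pointwise: split $\cE(J)$ into the $\cQ$-generations $\cQ_k(J)$, so that $U\ci J=\sum_{k\ge0}U\ci{J,k}$ with the intervals of a fixed generation pairwise disjoint. Then $\|U\ci{J,k}\|_{L^2(v)}^2=\sum_{I\in\cQ_k(J)}\La u\Ra\ci I^2\La v\Ra\ci I|I|\le 2A_1\La u\Ra\ci J\sum_{I\in\cQ_k(J)}|I|\le 2A_1\La u\Ra\ci J\,2^{-k}|J|$, using \eqref{Muck-01}, the stopping bound $\La u\Ra\ci I<2\La u\Ra\ci J$ on $\cE(J)$, and sparseness; the triangle inequality in $L^2(v)$ then sums these geometrically decaying layer norms to give $\|U\ci J\|_{L^2(v)}^2\le CA_1\La u\Ra\ci J|J|$ (note that in the example above $U\ci J$ is logarithmically large pointwise but still obeys this $L^2(v)$ bound, which is why any pointwise domination by $M$ is the wrong tool). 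After this repair, your concluding step --- the Carleson property of the sparse/stopping family and the maximal-function bound producing $\bu^*\ci{I_0}$ --- goes through exactly as in the paper.
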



To prove Lemma \ref{l:carl-U_J} we need the following simple fact: 

\begin{lm}
\label{l:norm-U_J}
Under the assumptions of Lemma \ref{l:carl-U_J}
\[
\|U\ci J\|\ci{L^2(v)}^2 \le C_1 A_1 \La u \Ra\ci J |J| .
\]
where $C_1$ is an absolute constant. 
\end{lm}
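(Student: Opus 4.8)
\textbf{Proof plan for Lemma \ref{l:norm-U_J}.}

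The plan is to exploit the key structural feature of the function $U\ci J$ coming from the definition of the stopping collection $\cG^*(J)$: on the set $J$, the function $U\ci J = \sum_{I\in\cE(J)}\La u\Ra\ci I \1\ci I$ is built from averages $\La u\Ra\ci I$ that are \emph{all comparable to $\La u\Ra\ci J$}. Indeed, if $I\in\cE(J)$ then $I\subset J$ and $I$ is not contained in any stopping interval from $\cG^*(J)$, which by the maximality in the definition \eqref{StopMoment} means exactly that $\La u\Ra\ci{I'} < 2\La u\Ra\ci J$ for every $I'\in\cQ$ with $I\subset I'\subset J$; in particular $\La u\Ra\ci I < 2\La u\Ra\ci J$. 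So the first step is to record this pointwise bound: for $x\in J$,
\[
U\ci J(x) = \sum_{\substack{I\in\cE(J)\\ x\in I}} \La u\Ra\ci I \1\ci I(x) \le 2\La u\Ra\ci J \cdot \#\{I\in\cE(J): x\in I\}.
\]
This is not yet bounded because a point can lie in many nested $I\in\cE(J)$; but the $\cQ$-sparseness will control the overlap in an $L^2$-averaged sense rather than pointwise.

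The second step is to estimate $\|U\ci J\|\ci{L^2(v)}^2 = \int_J U\ci J^2\, v\, dx$ by expanding the square and using sparseness. Write $\int_J U\ci J^2 v\,dx = \sum_{I, I'\in\cE(J)} \La u\Ra\ci I \La u\Ra\ci{I'} \int_{I\cap I'} v\,dx$. Since the $I$'s are dyadic, $I\cap I'\ne\emptyset$ forces nesting, so this is $\lesssim \sum_{I\in\cE(J)} \La u\Ra\ci I \sum_{I'\supset I, I'\in\cE(J)} \La u\Ra\ci{I'} \int_I v\,dx$. Using $\La u\Ra\ci{I'}\le 2\La u\Ra\ci J$ for all the $I'$ appearing, and then $\La u\Ra\ci I\le 2\La u\Ra\ci J$ as well, and the Muckenhoupt bound \eqref{Muck-01} in the form $\La v\Ra\ci I\le A_1/\La u\Ra\ci I$, one reduces matters to $\sum_{I\in\cE(J)} \La u\Ra\ci I \cdot \La u\Ra\ci J\cdot \frac{A_1}{\La u\Ra\ci I}|I| \cdot (\text{overlap factor})$. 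The real point is that the double sum collapses: for fixed $I$, $\sum_{I'\in\cE(J), I'\supset I}\La u\Ra\ci{I'}|I|$ behaves like a geometric-type sum because the averages $\La u\Ra\ci{I'}$ cannot all be close to $2\La u\Ra\ci J$ and simultaneously the chain be long — but actually the cleaner route is to just bound the inner sum by $2\La u\Ra\ci J \cdot (\text{number of }I'\text{ in the chain})$ and absorb the length of the chain using sparseness of $\cQ$, which gives geometric decay of $|I'|$ along the chain, hence $\sum_{I'\supset I}|I'|$-type control. Alternatively — and this is the step I'd try first — bound directly $\int_J U\ci J^2 v\,dx \le \|U\ci J\|_\infty \int_J U\ci J v\,dx$ is false since $U\ci J$ is unbounded; instead use $\int_J U\ci J^2 v\,dx \lesssim A_1 \La u\Ra\ci J \int_J U\ci J \, dx / \La u\Ra\ci{\text{(suitable)}}$...

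\textbf{Cleanest approach.} I would instead use the Carleson/sparse property to show $\sum_{I\in\cE(J), I\subset I'}|I| \lesssim |I'|$ for each $I'$ (this is immediate from $\cQ$ being sparse: the sets $\bigcup_{I\in\cQ, I\subsetneq I'}I$ have measure $\le\frac12|I'|$, so iterating, $\sum_{I\in\cQ(I')}|I|\le 2|I'|$). Then
\[
\int_J U\ci J^2 v\,dx = \sum_{I\in\cE(J)}\La u\Ra\ci I \int_I U\ci J\, v\,dx \le \sum_{I\in\cE(J)} \La u\Ra\ci I \cdot \Bigl(\sum_{I'\in\cE(J), I'\supseteq I}\La u\Ra\ci{I'}\Bigr)\int_I v\,dx.
\]
Now bound each $\La u\Ra\ci{I'}\le 2\La u\Ra\ci J$, so the inner parenthesis is $\le 2\La u\Ra\ci J\cdot n(I)$ where $n(I)$ is the number of $\cE(J)$-ancestors of $I$; but more efficiently, since the $I'$ run through a dyadic chain and $\cQ$ is sparse with ratio $1/2$, we have $\sum_{I'\supseteq I, I'\in\cQ}|I'| $... — the honest move is: $\int_I v\,dx = \La v\Ra\ci I |I| \le \frac{A_1}{\La u\Ra\ci I}|I|$, giving
\[
\int_J U\ci J^2 v\,dx \le 2A_1 \La u\Ra\ci J \sum_{I\in\cE(J)} n(I)\,|I|,
\]
and then $\sum_{I\in\cE(J)} n(I)|I| = \sum_{I\in\cE(J)}|I|\sum_{I'\in\cE(J), I'\supseteq I}1 = \sum_{I'\in\cE(J)}\sum_{I\subseteq I', I\in\cE(J)}|I| \le \sum_{I'\in\cE(J)} 2|I'|$ by sparseness — no wait, that reintroduces a sum over $I'$. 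So iterate once more: $\sum_{I'\in\cE(J)}|I'|$ itself is $\le$ (a geometric series) $\le C|J|$ only if the $I'$ are "sparse enough", which they are: $\sum_{I'\in\cQ(J)}|I'|\le 2|J|$. Hence $\sum_{I\in\cE(J)}n(I)|I|\le 2\sum_{I'\in\cE(J)}|I'| \le 4|J|$ — the double geometric sum converges. Therefore $\int_J U\ci J^2 v\,dx\le C A_1 \La u\Ra\ci J|J|$ with $C$ absolute, which is the claim.

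\textbf{Main obstacle.} The delicate point is precisely the convergence of the double sum $\sum_{I\in\cE(J)}n(I)|I|$: one must be careful that although $\cE(J)$ may contain long nested chains, the measures decay fast enough (thanks to $\cQ$-sparseness, $\sum_{\cQ(I')}|I|\le 2|I'|$) that weighting each $I$ by its number of ancestors still leaves a finite absolute multiple of $|J|$. A clean way to organize this is to note $\sum_{I\in\cE(J)}n(I)|I| = \sum_{k\ge 1}\sum_{I\in\cE(J),\, \text{depth}(I)\ge k}|I|$ and bound the inner sum by $2^{-(k-1)}\cdot 2|J|$ using sparseness level by level, giving a convergent geometric series; this is the step to execute carefully. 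Everything else — the pointwise comparability $\La u\Ra\ci I\le 2\La u\Ra\ci J$ on $\cE(J)$ from the stopping construction, and the elementary use of \eqref{Muck-01} — is routine.
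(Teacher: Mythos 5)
Your argument is correct in substance but takes a genuinely different route from the paper's. The paper slices $U\ci J$ into the generations $\cQ_k(J)$ of the sparse family: within one generation the intervals are disjoint, so $\|U\ci{J,k}\|_{L^2(v)}^2$ is computed exactly and bounded by $2A_1\La u\Ra\ci J\,2^{-k}|J|$ using \eqref{Muck-01}, the stopping bound $\La u\Ra\ci I\le 2\La u\Ra\ci J$, and the sparse packing $\sum_{I\in\cQ_k(J)}|I|\le 2^{-k}|J|$; the layers are then summed by Minkowski's inequality in $L^2(v)$, the factor $2^{-k/2}$ making the series converge. You instead expand the square and control the off-diagonal part combinatorially: nested pairs are attributed to the smaller interval, ancestors are counted, and the bound $\sum_{I\in\cE(J)}n(I)|I|\le 4|J|$ (Fubini plus the packing $\sum_{I\in\cQ,\,I\subseteq I'}|I|\le 2|I'|$, or equivalently your depth-by-depth geometric summation) replaces the paper's layer-by-layer Minkowski step. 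Both proofs rest on the same three ingredients and give an absolute $C_1$; yours is more combinatorial, the paper's a bit slicker. One inaccuracy you should repair: the displayed inequality $\int_I U\ci J v\,dx\le\bigl(\sum_{I'\in\cE(J),\,I'\supseteq I}\La u\Ra\ci{I'}\bigr)\int_I v\,dx$ is false as written, since $\int_I U\ci J v\,dx$ also contains the terms $\La u\Ra\ci{I''}\int_{I''}v\,dx$ for $I''\in\cE(J)$, $I''\subsetneq I$, which your right-hand side drops. The standard fix is to write $\int_J U\ci J^2 v\,dx=\sum_{I,I''\in\cE(J)}\La u\Ra\ci I\La u\Ra\ci{I''}\int_{I\cap I''}v\,dx$, observe that nonzero terms have the two intervals nested, and charge each ordered pair to the smaller interval; this costs only a factor $2$, after which your computation gives $\int_J U\ci J^2 v\,dx\le 4A_1\La u\Ra\ci J\sum_{I\in\cE(J)}n(I)|I|\le 16\,A_1\La u\Ra\ci J|J|$, which is the lemma.
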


\begin{proof}
Let 
$\cQ_k(J)$ be the $k$th generation of the $\cQ$-(grand)children  of $J$: 
$\cQ_0(J)=\{J\}$, $\cQ_1(J)$ be the collection of maximal (by inclusion) $I\in \cQ$ such that $I\subsetneq J$, and
\[
\cQ_{k+1} (J) = \bigcup_{I\in\cQ_k(J)} \cQ_1(I) \,.
\]
Note, that since $\cQ$ is a sparse family
\begin{align}
\label{GeomProgr-01}
\sum_{I\in\cQ_k(J)} |I| \le 2^{-k} |J| 
\end{align}
Clearly, 
\[
U\ci J = \sum_{k\ge 0} \sum_{I\in\cQ_k(J)} \La u\Ra\ci I \1\ci I =: \sum_{k\ge0} U\ci{J,k}. 
\]
We can estimate using \eqref{Muck-01}
\begin{align*}
\|U\ci{J,k}\|\ci{L^2(v)}^2 & = \sum_{I\in\cQ_k(J)} \La u \Ra\ci I^2 \La v\Ra\ci I |I| && \\
& \le \sum_{I\in\cQ_k(J)} \La u \Ra\ci I A_1 |I|  && \text{by \eqref{Muck-01}} \\
& \le 2 A_1\La u\Ra\ci J \sum_{I\in\cQ_k(J)} |I| && \text{because } \La u\Ra\ci I \le 2\La u\Ra\ci J \\
& \le 2 A_1 \La u\Ra\ci J \, 2^{-k} |J| && \text{by \eqref{GeomProgr-01}}. 
\end{align*}
Therefore
\[
\|U\ci J\|\ci{L^2(v)} \le \sum_{k\ge0} \|U\ci{J,k}\|\ci{L^2(v)} \le 2^{1/2}A_1^{1/2} \La u\Ra\ci J^{1/2} \sum_{k\ge0} 2^{-k/2} =\frac{2^{1/2}}{1-2^{-1/2}} A_1^{1/2} \La u\Ra\ci J^{1/2}
\]
and the lemma is proved with $C_1=2/(1-2^{-1/2})^2$. 
\end{proof}

\begin{proof}[Proof of Lemma \ref{l:carl-U_J}]
Applying Lemma \ref{l:norm-U_J} we get
\[
\sum_{J\in\cG(I_0)} \|U\ci J\|\ci{L^2(v)}^2 \le C_1 A_1 \sum_{I\in\cQ(I_0)} \La u \Ra\ci I |I| . 
\]
Since $\cQ$ is sparse, the corresponding measure is Carleson, 
so
\[
\frac{1}{|I_0|}\sum_{I\in\cQ(I_0)} \La u \Ra\ci I |I| \le C \|M(\1\ci{I_0}u)\|\ci{L^1(I_0)} \le C\bu^*\ci{I_0} . 
\]
\end{proof}

\subsection{Proof of Theorem \ref{t:TestLernOp_p=2}: first splittings and the easy estimate}
\label{s:FirstSplit}

The idea of the proof is as follows. We split the operator $T$ as the sum $T=\sum_{k,n\ge 0} T_{k,n}$, where in each $T_{k,n}$ the summation is taken only over the intervals $I\in \cQ$ such that 
\begin{align}
\label{bds-rho}
2^k \le \rho\ci I := \bu^*\ci I /\bu_I &< 2^{k+1} \,,  
\\
\label{bds-uv}
2^{-n-1}B_k < \bu\ci I \bv\ci I  &\le  2^{-n} B_k  \,,
\end{align}
where $B_k = 2^{-k} \alpha(2^{k})^{-2} A $. Rewriting the assumption \eqref{Bumped_H1_bump} as
\begin{align}
\label{Bumped_H1_bump-01}
\sup_{I\in\cQ} \alpha(\rho\ci I)^2 \rho\ci I \bu\ci I \bv\ci I =A <\infty
\end{align}
we can see that the inequality $\rho\ci I \ge 2^k$ from \eqref{bds-rho}  implies that $\bu\ci I \bv\ci I \le B_k$, so indeed $T=\sum_{k,n\ge0} T_{k,n}$.


We will prove that under the assumption of Theorem \ref{t:TestLernOp_p=2} 
\begin{align}
\label{Norm-T_kn-u}
\| T_{k,n} (\1\ci{I_0}u) \|\ci{L^2(v)} \le C A^{1/2} 2^{-n/2}\alpha(2^k)^{-1} \bu_{I_0}^{1/2}|I_0|^{1/2}.
\end{align}
The theorem will immediately follows from the following simple fact

\begin{lm}
\label{sum-alpha_2^k}
Let $t\mapsto t\alpha(t) > 0$ be an increasing function on $[1, \infty)$. Then  
\[
 \sum_{k=1}^\infty \frac{1}{\alpha(2^k)} \le 2 \int_1^\infty \frac{dt}{t\alpha(t)}  . 
\]
\end{lm}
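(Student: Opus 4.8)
The statement is a clean monotonicity-to-integral comparison: $\sum_{k\ge 1} \frac{1}{\alpha(2^k)} \le 2\int_1^\infty \frac{dt}{t\alpha(t)}$, given that $t\mapsto t\alpha(t)$ is increasing on $[1,\infty)$. The plan is to bound each term $\frac{1}{\alpha(2^k)}$ by an integral of $\frac{1}{t\alpha(t)}$ over a dyadic block, summing the blocks to recover $\int_1^\infty$. Concretely, for $t\in[2^k,2^{k+1}]$ the monotonicity of $t\alpha(t)$ gives $t\alpha(t)\ge 2^k\alpha(2^k)$, hence $\frac{1}{t\alpha(t)}\le \frac{1}{2^k\alpha(2^k)}$; but that inequality points the wrong way, so instead I use it on the block $[2^{k-1},2^k]$: there $t\alpha(t)\le 2^k\alpha(2^k)$, so $\frac{1}{t\alpha(t)}\ge \frac{1}{2^k\alpha(2^k)}$, and therefore
\[
\int_{2^{k-1}}^{2^k}\frac{dt}{t\alpha(t)} \ge \frac{1}{2^k\alpha(2^k)}\cdot\big(2^k-2^{k-1}\big) = \frac{1}{2\alpha(2^k)}.
\]

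First I would record this block estimate for each $k\ge 1$. Then I would sum over $k$: since the intervals $[2^{k-1},2^k]$ for $k\ge 1$ are disjoint and contained in $[1,\infty)$ (indeed they tile $[1,\infty)$), additivity of the integral gives
\[
\sum_{k=1}^\infty \frac{1}{2\alpha(2^k)} \le \sum_{k=1}^\infty \int_{2^{k-1}}^{2^k}\frac{dt}{t\alpha(t)} = \int_1^\infty \frac{dt}{t\alpha(t)},
\]
and multiplying by $2$ yields the claim. One should note the integrand $\frac{1}{t\alpha(t)}$ is non-negative, which legitimizes the interchange of sum and integral (monotone convergence on partial sums), and that $t\alpha(t)>0$ on $[1,\infty)$ so no division-by-zero issue arises.

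There is essentially no obstacle here; the only point requiring a moment's care is getting the direction of the monotonicity right — one must compare $\alpha(2^k)$ against values of $t\alpha(t)$ on the block to the \emph{left} of $2^k$, not the right, so that the inequality $\frac{1}{t\alpha(t)}\ge\frac{1}{2^k\alpha(2^k)}$ holds and the integral dominates the term. A symmetric choice using the blocks $[2^k,2^{k+1}]$ would instead bound $\int_1^\infty\frac{dt}{t\alpha(t)}$ from below by $\sum_{k\ge 1}\frac{1}{2\alpha(2^{k+1})}$, which is a slightly weaker (shifted) statement, so the left-block choice is the one that produces exactly the constant $2$ claimed. Everything else is routine, and the lemma follows immediately.
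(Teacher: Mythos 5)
Your proof is correct and is essentially the paper's own argument: the paper's one-line proof (``split $[1,\infty)$ into the intervals $[2^k,2^{k+1})$, $k\ge 0$, and consider the lower (right) Riemann sum'') is exactly your estimate $\int_{2^{k-1}}^{2^k}\frac{dt}{t\alpha(t)}\ge \frac{1}{2\alpha(2^k)}$ obtained from the monotonicity of $t\mapsto t\alpha(t)$, just indexed differently. Your closing aside about the ``symmetric choice'' being weaker is only an indexing matter --- starting the right-endpoint blocks at $k=0$ recovers the full sum --- and does not affect the argument.
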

\begin{proof}
Split $[1,\infty)$ into the intervals $[2^k, 2^{k+1})$, $k\ge 0$ consider the lower (right) Riemann sum in the integral. 
\end{proof}

So, let us skip the indices $k$ and $n$  and assume from now that \eqref{bds-rho} and \eqref{bds-uv} hold for all $I\in \cQ$. The stopping moments $\cG$ and the functions $U\ci J$ are defined as before. Applying 
Lemma \ref{l:carl-U_J} and using the second inequality in  \eqref{bds-rho} we get that
\[
\sum_{J\in\cG(I_0)} \|U\ci J\|\ci{L^2(v)}^2 \le C A_1 \bu^*\ci{I_0} |I_0|\le 2^{k+1}  C A_1 \bu\ci{I_0} |I_0|\,.
\]
The second inequality in \eqref{bds-uv} implies that 
\[
A_1 \le 2^{-n} B_k = 2^{-n} 2^{-k} \alpha(2^k)^{-2} A
\]
so
\begin{align}
\label{carl-U_J-01}
\sum_{J\in\cG(I_0)} \|U\ci J\|\ci{L^2(v)}^2 \le 2 C A 2^{-n} \alpha(2^k)^{-2} \bu\ci{I_0} |I_0|\,.
\end{align}

For $J\in \cG$ define $G(J):= \bigcup_{I\in \cG^*(J)} I$. Denoting $d\nu = vdx$ take $g\in L^2(v)$, $\|g\|\ci{L^2(v)} =1$ and write
\begin{align*}
\int U\ci J g d\nu = \int_J U\ci J g d\nu = \int_{J\setminus G(J)} U\ci J g d\nu +
\int_{G(J)} U\ci J g d\nu =: A(J) + B(J). 
\end{align*}

To estimate the sum of $A(J)$ let us write  
\begin{align*}
\sum_{J\in\cG} A(J)  & \le  \sum_{J\in\cG(I_0)} \| U\ci{J} \|\ci{L^2(\nu)} \| g \1\ci{J\setminus G(J)} \|\ci{L^{2}(\nu)} 
\\ & \le
\left(  \sum_{J\in\cG(I_0)} \| U\ci{J} \|\ci{L^2(\nu)}^2 \right)^{1/2} 
\left(  \sum_{J\in\cG(I_0)}   \| g \1\ci{J\setminus G(J)} \|\ci{L^{2}(\nu)}^{2}  \right)^{1/2}  && \text{Cauchy--Schwarz}
\\  &  \le 
\left(  \sum_{J\in\cG(I_0)} \| U\ci{J} \|\ci{L^2(\nu)}^2 \right)^{1/2} \| g  \|\ci{L^{2}(\nu)} && 
J\setminus G(J)\ \text{are disjoint}
\\  & \le
C A^{1/2} 2^{-n/2}\alpha(2^k)^{-1} \bu_{I_0}^{1/2}|I_0|^{1/2} 
&& \text{by \eqref{carl-U_J-01} } , 
\end{align*}
so we got the correct estimate for the sum of $A(J)$. 

\subsection{Conclusion of the proof of Theorem \ref{t:TestLernOp_p=2}: 
the ``hard'' estimate}
\label{s:HardEst}

The estimate  the sum of $B(J)$ 
is based on the fact that the system $\cG$ of stopping intervals (cubes) is $\nu$-Carleson, meaning that for any $J\in \cQ$
\begin{align}
\label{Carl-v-01}
\sum_{I\in \cG,\, I\subset J} \nu(I) \le C \nu(J) . 
\end{align}
To see that we notice that for $I\in \cG^*(J)$ we have 
\[
\bu\ci I \ge 2 \bu\ci J . 
\]
On the other hand we can see from \eqref{bds-uv} that
\[
\bu\ci I \bv\ci I \le 2^{-n} B_k, \qquad \bu\ci J \bv\ci J \ge 2^{-n-1}B_k, 
\]
so 
\[
\bv\ci I /\bv\ci J \le 2 \bu\ci J/\bu\ci I \le 1. 
\]
Therefore for $J\in \cG$
\begin{align*}
\sum_{I\in \cG^*(J)} \nu(I) = \sum_{I\in\cG^*(J)} \bv\ci I |I| \le \bv\ci J \sum_{I\in\cG^*(J)}  |I| \le \bv\ci J  |J|/2  =\nu(J)/2 .
\end{align*}
Summing the geometric series we get that for any $J\in\cG$  estimate \eqref{Carl-v-01} holds with $C=2$. To get the estimate for arbitrary $J\in\cQ$, we apply the estimate we just proved to the maximal (by inclusion) $\wt J\in \cG$, $\wt J\subset J$. 

To complete the proof let us denote
\[
\La k \Ra\ci{I,\nu} := \nu(I)^{-1} \int_I g d\nu  = \La gv\Ra\ci I/ \La v\Ra\ci I .
\]
Since the function $U\ci J$ is constant on any stopping interval $I\in \cG^*(J)$, 
\[
B(J) = \int_{G(J)} U\ci J g d\nu = \int_{G(J)} U\ci J g\ci J d\nu, 
\]
where 
\[
g\ci J := \sum_{I\in\cG^*(J)} \La g \Ra\ci{I,\nu} \1\ci I .
\]
Note that 
\[
\| g\ci J\|\ci{L^2(\nu)}^2 = \sum_{I\in\cG^*(J)} \La g \Ra\ci{I,\nu}^2 \nu(I). 
\]

Then we can estimate
\begin{align*}
\sum_{J\in \cG(I_0)} B(J) & \le \sum_{J\in \cG(I_0)} \|U\ci J\|\ci{L^2(\nu)} \|g\ci J\|\ci{L^2(\nu)}  \\
& \le \left( \sum_{J\in \cG(I_0)} \|U\ci J\|\ci{L^2(\nu)}^2 \right)^{1/2} 
 \left( \sum_{J\in \cG(I_0)} \|g\ci J\|\ci{L^2(\nu)}^2 \right)^{1/2}
\end{align*}
The first factor is already estimated in \eqref{carl-U_J-01}. To estimate the second factor
we write 
\begin{align*}
\sum_{J\in \cG(I_0)} \|g\ci J\|\ci{L^2(\nu)}^2 & = \sum_{J\in \cG(I_0)} \sum_{I\in \cG^*(J) } \La g \Ra\ci{I,\nu}^2 \nu(I) \\
& = \sum_{I\in \cG(I_0), I\ne I_0} \La g \Ra\ci{I,\nu}^2 \nu(I) \le C \|g\|\ci{L^2(\nu)}^2 = C; 
\end{align*}
the inequality follows from the martingale Carleson embedding theorem. Note that one can take $C=8$ in this estimate: $2$ is a constant in \eqref{Carl-v-01}, and $4$ is the constant in the embedding theorem. 

Combining this with the estimate \eqref{carl-U_J-01} for the first factor we get
\[
\sum_{J\in\cG(I_0)} B(J) \le C A^{1/2} 2^{-n/2} \alpha(2^k)^{-1} \bu\ci{I_0}^{1/2} |I_0|^{1/2} 
\]
(square root of the estimate \eqref{carl-U_J-01}). Theorem \ref{t:TestLernOp_p=2} is proved. \hfill\qed

\section{Remarks about one weight estimates}
\label{s:OneWeight}

Theorem \ref{t:main-01} implies that one weight dyadic Muckenhoupt condition
\begin{align}
\label{A_2}
\sup_{I\in\cD} \La v\Ra\ci I \La v^{-1} \Ra\ci I =: [v]\ci{A_2} <\infty
\end{align}
implies the boundedness of the Haar shifts, paraproducts and sparse operators 
in the weighted space $L^2(v)$, or, equivalently, the boundedness of the 
operator $M_v^{1/2} T M_v^{-1/2}$, equivalently  the estimate \eqref{inte-02} 
with $u=v^{-1}$. 

In the homogeneous case this result is well known, but it is new in the 
non-homogeneous situation. Note also that in the definition of Haar shifts we 
use $L^1\times L^1 $ normalization  \eqref{L1xL1-norm} of the blocks $T\ci I$; 
the question whether the condition \eqref{A_2} is sufficient if we only assume 
that  the blocks $T\ci I$ are uniformly bounded in non-weighted $L^2$ remains 
open. At the moment we do not even know whether \eqref{A_2} is sufficient for 
the uniform boundedness of the martingale multipliers ($T\ci I=\pm\bI$). 
All this shows that the one weight non-homogeneous case warrants further 
investigation.


To prove the result mentioned at the beginning of this section, one can consider the Wilson's $A_\infty$ characteristic of a weight $v$
\begin{align}
\label{W-A_infty}
[v]\ci{A_\infty} := \sup_{I\in\cD} \frac{\|M(\1\ci I v)\|\ci{L^1(I)} }{\La v \Ra\ci I} \le [v]\ci{A_2};
\end{align}
the last inequality is well-known and is not hard to prove. Since $[v^{-1}]\ci{A_2} = [v]\ci{A_2}$, 
we conclude that $[v^{-1}]\ci{A_\infty} \le [v]\ci{A_2}$. 

Thus, defining for example $t\alpha(t):=[v]\ci{A_2}$ for $t\le [v]\ci{A_2}$ and 
$\alpha(t)=\infty$ for $t>[v]\ci{A_2}$ and using Theorems \ref{t:H1_bumps-01} 
and we get the boundedness with the estimate $C[v]\ci{A_2}^{3/2}$ of the norm. 

We were not able to get the linear estimate $C[v]\ci{A_2}$ of the norm in the 
non-homogeneous situation, which probably should be the correct one, by picking 
an appropriate $\alpha$. However, for the sparse operators the linear estimate 
of the norm can be obtained by an obvious modification (and simplification) of 
the proof of Theorem \ref{t:TestLernOp_p=2}. And this does not depend on whether we are in homogeneous or non-homogeneous setting.

\begin{rem*} However, we remind the reader that in the non-homogeneous 
situation one does not know whether the treatment of the weighted boundedness 
of any Calder\'on--Zygmund operator can be reduced to the treatment of sparse 
operators of Lerner's type. As a contrast, there is a reduction of a general 
non-homogeneous Calder\'on--Zygmund operator to martingale shifts and 
paraproducts, see \cite{Vo1}.
\end{rem*}

Coming back to the sparse operators, if we assume that 
\[
\sup_{I\in\cD}\bu\ci I \bv\ci I =:[u,v]\ci{A_2} <\infty , 
\]
and that 
\[
\sup_{I\in\cD} \bu^*\ci I/\bu\ci I =: [u]\ci{A_\infty}<\infty,  
\]
then we can show that for a sparse operator $T$ for any $I_0\in\cD$ 
\[
\int_{I_0} |T (\1\ci{I_0} u) |^2 v dx \le C [u]\ci{A_\infty} [u,v]\ci{A_2} \bu\ci{I_0} |I_0|.  
\]

Again, this result is well-known in the homogeneous situation, even for two weights, see for example

To show the above estimate, one  splits the operator $T=\sum_{n:\,  2^n \le[u]\ci{A_2}} T_n$, where the sum in $T_n$ is taken over all $I$ such that 
\[
 2^n \le \La u \Ra\ci I \La v \Ra\ci I \le 2^{n+1} . 
\]
Then is is sufficient to show that 
\[
\|T_n (\1\ci{I_0}u) \|\ci{L^2(v)} \le C [u]\ci{A_\infty}^{1/2} 2^{n/2}, 
\]
which can be done following the reasoning in Sections \ref{s:FirstSplit} and \ref{s:HardEst}. 

We leave the details as an exercise for the reader.

\section{Remarks about sharpness}
\label{s:Sharpness}

As we mentioned before, the $L\log L$ bump (even two-sided) is not sufficient 
for the boundedness  of \cz operators. Moreover, we will show that for any 
rearrangement invariant Banach  spaces $X$ and $Y$  on a unit interval where the fundamental functions $\psi$ of $X$ is such that 
\[
\int_0 \frac{ds}{\psi(s)} =+\infty
\] 
there exists a pair of weights $u$, $v$ on $\R$ such that for all intervals $I$
\begin{align}
\label{bumps_XY}
\|u\|\ci{X(I)} \|v\|\ci{Y(I)} \le B<\infty  
\end{align}
but 
\begin{align}
\label{T1=infty}
\|T(1\ci{[-1,1]}u ) \|\ci{L^2(v)} = \infty ;
\end{align}
here $T$ is the Hilbert Transform, 
$X(I) = X(I, \frac{dx}{|I|})$ and similarly for $Y(I)$. 

Thus the operator $f\mapsto T(uf)$ does not act $L^2(u)\to L^2 (v)$. Moreover, even a weak 
type estimate (for the adjoint operator) fails. Recall that the adjoint operator 
(acting ($L^2(v)\to L^2(u)$) is given by $g\mapsto -T(gv)$: and 
 \eqref{T1=infty} implies that the operator $g \mapsto T(gv)$ does not act 
$L^2(v) \to L^{2,\infty}(u)$ (i.e.~it is not even of weak type $2$-$2$). 

This follows from a  well-known reasoning: if the operator acts $L^2(v) \to L^{2,\infty}(u)$, 
then computing integral using distribution function and using weak type estimates, 
one can conclude that for any measurable $E$
\[
\int_E |T(gv)|^2 u \le C \|g\|\ci{L^2(v)}^2 \qquad \forall f\in L^2(v). 
\]
But this exactly means that 
\[
\| |T(\1\ci E u)\|^2v \le C\|\1\ci E\|\ci{L^2(u)}^2 = C\int_E u 
\]
for all measurable $E$. But in our example below this fails even for the interval $[-1,1]$.

To construct the example  define $u:=\1\ci{[-1,1]}$. It follows from the definition 
of the fundamental function $\psi=\psi\ci X$ that for $I=[0,a]$ or $I=[-a,a]$, $a\ge 1$
\begin{align}
\label{X-norm_u_on_I} 
\| u\|\ci{X(I)} = \psi(1/a)
\end{align}

Defining 
\[
v(x):=\left\{ \begin{array}{ll} 1/\psi(1/|x|) , \qquad & |x|\ge 1, \\
                                1/\psi(1) ,   & |x|<1 ,    \end{array}\right.
\]
we can see that \eqref{bumps_XY} is satisfied. 

Indeed, if $I=[0,a]$ or $I=[-a,a]$ then the estimate \eqref{bumps_XY} 
follows immediately from  \eqref{X-norm_u_on_I} , definition of $v$ and the 
property of Banach function spaces $\|v\|\ci{Y(I)} \le C\|v\|\ci{L^\infty(I)}$.  

If $I\cap[-1,1] =\varnothing$ then \eqref{bumps_XY} is trivial (left hand 
side is zero), so one needs to show that \eqref{bumps_XY} holds uniformly 
for all $I$, $I\cap[-1,1] \ne\varnothing$.  

If $|I|> 2$ (and $I\cap[-1,1] \ne\varnothing$) then, denoting $I_0=[-1,1]$
\begin{align*}
\| u \|\ci{X(I)} = \psi (|I\cap I_0|/ |I|) &\le \psi (|I_0|/|I|) && \text{because }
\psi(s)\uparrow
\\
&\le  4\psi (|I_0|/ (4|I|)) && \text{because } s\psi(s) \downarrow \\
& =  4\psi (1/ (2|I|)) \,\, , 
\end{align*}
and
\[
\|v\|\ci{Y(I)}\le C \|v\|\ci{L^\infty(I)} \le C \psi(1/(|I|+1))^{-1} \le C \psi(1/(2|I|))^{-1} .
\]
Combining these two inequalities we get \eqref{bumps_XY} with $B=4C$. 


If $|I|\le 2$ (and still $I\cap[-1,1] \ne\varnothing$) then 
\[
v(x) \le 1/\psi(1/3), \qquad u(x) \le 1  \qquad \forall x\in I, 
\]
so \eqref{bumps_XY} trivially holds for such intervals. 

Thus, \eqref{bumps_XY} holds for the weights $u$, $v$. 

Let now $I=[-1,1]$. Then $\1\ci I u=u$ and $\|\1\ci Iu\|^2\ci{L^2(u)}  = 2$. On 
the other hand for $|x|>1$
\[
|T\1\ci I u (x)|\ge 1/|x|, 
\]
so
\[
\| T\1\ci I u\|^2\ci{L^2(v)} \ge 2\int_1^\infty \frac{1}{x^2}\cdot\frac{1}{\psi(1/x) }dx = 2\int_0^1 
\frac{ds}{\psi(s)} =\infty, 
\]
and \eqref{T1=infty} is proved. \hfill \qed

\begin{rem}
A similar construction shows the necessity of the integrability condition 
\begin{align}
\label{inte-03}
\int^\infty \frac{dt}{t\alpha(t)} <\infty
\end{align}
in Theorem \ref{t:main-01}. 

Namely, suppose this condition fails for a penalty function $\alpha$, and $\beta$ is an arbitrary penalty function (that could satisfy \eqref{inte-03}). As usual we assume that $t\mapsto t\alpha(t)$ and $t\mapsto t\beta(t)$ are increasing; let us also assume that $t\mapsto e^t/(t\alpha(t))$ increases for $t\ge1$. 

Then it is possible to construct 
a pair of weights $u$, $v$ satisfying the bump condition 
\begin{align}
\label{bump-03}
\sup_I \alpha(\bu^*\ci I/\bu\ci I) \bu^*\ci I \bv^*\ci I \beta(\bv^*\ci I /\bv\ci I) <\infty
\end{align}
and such that 
\eqref{T1=infty} holds. 

To do that we again put $u=\1\ci{[-1,1]}$. Recalling that $\bu\ci I^* = \| u\|\ci{\Lambda_{\psi_0}(I)}$, 
$\psi_0(s) = s \ln(e/s)$, we compute using the definition of the fundamental function that for $I=[0,a]$, $a\ge 1$
\[
\bu\ci I^* = \|u\|\ci{\Lambda_{\psi_0(I)}} = a^{-1} \ln(e a), \qquad \bu\ci I=\|u\|\ci{L^1(I)} = a^{-1}.  
\]
Defining for $|x|\ge 1$
\[
v(x) = \frac{|x|}{\ln(e|x|) \alpha(\ln (e|x|))}
\]
and putting $v(x) = v(1)$ for $|x|<1$ we get the weights satisfying \eqref{bump-03} and such that \eqref{T1=infty} holds. 

The fact that \eqref{T1=infty} hods follows elementary from the failure of the integrability condition \eqref{inte-03}. The fact that the bump condition \eqref{bump-03} is satisfied can be proved similarly to how it was done in the previous example. 

The crucial fact there is that the weight $v$ is clearly doubling, $v(x)\le 2 v(2x)$, and it is also increasing by the assumption that $t\mapsto e^t/(t\alpha(t))$ increases for $t\ge 1$. Then for any interval $I=[0,a] $ 
\[
\|v\|\ci{L^1(I)} \asymp\|v\|\ci{\Lambda_{\psi_0}(I)} \asymp \|v\|\ci{L^\infty(I)} = \psi(a),  
\]
and so  it is easy to check that \eqref{bump-03} holds for intervals $I$ of form $[0,a]$ and $[-a,a]$. 

The general case can be reduced to this case. We need only to check that the intervals $I$ such that $I\cap [-1,1]\ne\varnothing$. Let us fix $|I|$ and consider the worst possible cases, i.e.~the maximal possible values of $\bu^*\ci I$ and $\bv^*\ci I$, and the minimal possible values of $\bu$ and $\bv$ (assuming that $|I|$ is fixed and $I\cap [-1,1]\ne \varnothing$). 

Then using doubling property of $v$ we get the conlclusion. 

We leave the detail as an exercise for the reader. 
\end{rem}

\begin{rem}
For general $p$, $1<p<\infty$, the bumping condition reads
\begin{align}
\label{pBump_XY}
\| u\|\ci{X(I) }^{1/p'} \|v\|\ci{Y(I)}^{1/p} \le B <\infty
\end{align}
uniformly on all intervals $I$; here $1/p+1/p'=1$. 

One can see that  exponents are correct by investigating the homogeneity. Note also that the case $X=Y=L^1$ gives the Muckenhoupt $A_p$ condition. 

If the fundamental function $\psi =\psi\ci X$ is such that 
\begin{align}
\label{pBump-psi}
\int_0^1 \psi(s)^{-p/p'} s^{p-2} ds = \infty
\end{align}
(nothing is assumed about $X$) one can construct a pair of weights $u$, $v$ satisfying \eqref{pBump_XY} and such that for the Hilbert Transform $T$ 
\begin{align}
\label{pT1=infty}
\| T(\1\ci{[-1,1]} u ) \|\ci{L^p(v)} = \infty. 
\end{align}

To do that we define $u:=\1\ci{[-1,1]}$ and $v(x) = \max\{ 1/\psi(1), 1/\psi(|x|)\}^{-p/p'}$. All the calculations are similar to the presented above for the $L^2$ case; we leave them as an exercise for the reader.  
\end{rem}

\begin{rem*}
In the above examples \eqref{T1=infty} and \eqref{pT1=infty} do hold if we replace the Hilbert Transform $T$ by the maximal function.
\end{rem*}

An example that for $\psi =\psi\ci{X}$ satisfying \eqref{pBump-psi} and for $Y=L^1$ the bumping condition \eqref{pBump_XY} is not sufficient for the weighted estimate for the maximal function was presented in \cite{PeMax}, see Proposition 3.2 there. 

This proposition is stated in a slightly different language, but  after translation one can see that   integrability of the left hand side in \eqref{pBump-psi} 
is equivalent to the integrability condition on $\f\ci{X'}$ in there; note that our $\psi$ and $\f\ci{X'}$ in \cite{PeMax} are related as
$\f\ci{X'}(s) = s\psi^{-1/p'}(s) $.

As for the Hilbert Transform the only counterexample  (previous to ours) we are aware of, is the example in \cite{CU-Pe99} showing that the condition \eqref{bumps_XY} with $X=L\log L$, $Y=L^1$ is not sufficient for 
the operator $g\mapsto T(fv)$ to be acting  $L^2(v)\to L^{2,\infty}(u)$.

\def\cprime{$'$}
  \def\lfhook#1{\setbox0=\hbox{#1}{\ooalign{\hidewidth\lower1.5ex\hbox{'}\hidewidth\crcr\unhbox0}}}
\providecommand{\bysame}{\leavevmode\hbox to3em{\hrulefill}\thinspace}
\providecommand{\MR}{\relax\ifhmode\unskip\space\fi MR }
\providecommand{\MRhref}[2]{%
  \href{http://www.ams.org/mathscinet-getitem?mr=#1}{#2}
}
\providecommand{\href}[2]{#2}

\end{document}